  \newtheorem{theorem}{Theorem}[section]
 \newtheorem{corollary}[theorem]{Corollary}
 \newtheorem{claim}[theorem]{Claim}
 \newtheorem{lemma}[theorem]{Lemma}
 \newtheorem{proposition}[theorem]{Proposition}
 \newtheorem{example}[theorem]{Example}
 {\theoremstyle{definition}
 \newtheorem{definition}[theorem]{Definition}
 \newtheorem{remark}[theorem]{Remark}

 }
\newcommand{\beq}{\begin{equation}}
\newcommand{\eeq}{\end{equation}}
\title{Type $1$ and $2$ sets for series of translates of functions}
\author{Zolt\'an Buczolich\thanks{\scriptsize
This author was supported by the Hungarian National Research, Development and Innovation Office--NKFIH, Grant  124749.
},
Department of Analysis, ELTE E\"otv\"os Lor\'and\\
University, P\'azm\'any P\'eter S\'et\'any 1/c, 1117 Budapest, Hungary\\
email: buczo@cs.elte.hu\\
{\tt www.cs.elte.hu/\hbox{$\sim$}buczo}\\
ORCID Id: 0000-0001-5481-8797
  \smallskip\\
  Bruce Hanson, Department of Mathematics,\\ Statistics and Computer Science,\\ St.\ Olaf College,
Northfield, Minnesota 55057, USA\\
{email:} \texttt{hansonb@stolaf.edu}
 \smallskip\\
 Bal\'azs Maga\thanks{\scriptsize This author was supported by the \'UNKP-17-2 New National Excellence of the Hungarian Ministry of Human Capacities, and by the Hungarian National Research, Development and Innovation Office–NKFIH, Grant 124003.},
Department of Analysis, ELTE E\"otv\"os Lor\'and\\
University, P\'azm\'any P\'eter S\'et\'any 1/c, 1117 Budapest, Hungary\\
 email: magab@cs.elte.hu \\{\tt www.cs.elte.hu/\hbox{$\sim$}magab}
  \smallskip\\
and
  \smallskip\\
 G\'asp\'ar V\'ertesy\thanks{\scriptsize This author was supported by the Hungarian National Research, Development and Innovation Office–NKFIH, Grant 124749.
 \newline\indent {\it Mathematics Subject
Classification:} Primary : 28A20, Secondary : 40A05.
\newline\indent {\it Keywords:} almost everywhere convergence, algebraic independence, Borel--Cantelli lemma.},
 Department of Analysis, ELTE E\"otv\"os Lor\'and\\
University, P\'azm\'any P\'eter S\'et\'any 1/c, 1117 Budapest, Hungary\\
email: vertesy.gaspar@gmail.com\
}
\date{\today}
\begin{document}
\maketitle

\medskip

\medskip
{\em Dedicated to the memory of \'Akos Cs\'asz\'ar}

\medskip


\begin{abstract}
 Suppose $\Lambda$ is a discrete infinite set of nonnegative real numbers.
We say that $ {\Lambda}$ is type $1$ if the series $s(x)=\sum_{\lambda\in\Lambda}f(x+\lambda)$
 satisfies a zero-one law. This means that for any non-negative measurable
 $f: {\ensuremath {\mathbb R}}\to [0,+ {\infty})$ either the convergence set $C(f, {\Lambda})=\{x: s(x)<+ {\infty} \}= {\ensuremath {\mathbb R}}$ modulo sets of Lebesgue zero, or its complement the divergence set $D(f, {\Lambda})=\{x: s(x)=+ {\infty} \}= {\ensuremath {\mathbb R}}$ modulo sets of measure zero.
 If $ {\Lambda}$ is not type $1$ we say that $ {\Lambda}$ is type 2.

The exact characterization of type $1$ and type $2$ sets is not known.
In this paper we continue our study of the properties of type $1$ and $2$ sets.
We discuss sub and supersets of type $1$ and $2$ sets and we give a complete and simple
characterization of a subclass of dyadic type $1$ sets. We discuss the existence
of type $1$ sets containing infinitely many algebraically independent elements.
Finally, we consider unions and Minkowski sums of type $1$ and $2$ sets.
  \end{abstract}


\section{Introduction}\label{*secintro}
This
paper is related to the talk given by the first listed author at the
\'Akos Cs\'asz\'ar  Memorial Conference held at the R\'enyi Institute
on February 26,  2018. In 2017 we lost two outstanding mathematicans
Jean-Pierre Kahane and \'Akos Cs\'asz\'ar.
During the Fall of 2017 in paper \cite{[BMV]}, which was prepared for the Jean-Pierre Kahane
memorial volume of Analysis Mathematica we returned to some open questions
from \cite{[BKM1]}, written by Z. Buczolich, J-P. Kahane and D. Mauldin.
It is a strange recurrence of events that in 1999 at the 75th Birthday conference
of  \'Akos Cs\'asz\'ar the first listed author gave a talk on the results from
\cite{[BKM1]} and now exactly when the continuation of that paper was going on
he had the opportunity to speak about this
topic again at the \'Akos Cs\'asz\'ar  Memorial Conference.

This line of research began with a question which was called
 the
{\it Khinchin conjecture \cite{[Kh]} (1923):}

{\it Assume that
$E {\subset} (0,1)$
is a measurable set and
$f(x)=\chi_{E}(\{x\})$,
where $\{x\}$ denotes the fractional part of $x$.
Is it true that for almost every $x$
$$\frac{1}{k}\sum_{n=1}^{k} f(nx)\to  {\mu}(E)?$$}
(In our paper $ {\mu}$ denotes the Lebesgue measure.)

In 1969
Marstrand \cite{[M]}
proved that the Khinchin conjecture is not true.
Other counterexamples were given by J. Bourgain \cite{[Bou]} by using his entropy method and by A. Quas and M. Wierdl \cite{[QW]}.
For further results related to the Khinchin conjecture  we also refer to
\cite{[Beck]} and \cite{[Bee]} and for some generalizations we mention
\cite{[ABW]}, \cite{[BWa]} and \cite{[BWb]}.

The Khinchin conjecture dealt with periodic functions $f$.
For the non-periodic case
there was a question from 1970, originating
from the Diplomarbeit of
 Heinrich von Weizs\"aker \cite{[HW]}:

{\it Suppose $f:(0,+ {\infty})\to {\ensuremath {\mathbb R}}$ is a measurable function.
Is it true that
$\sum_{n=1}^{{\infty}}f(nx)$
either converges (Lebesgue) almost everywhere or diverges almost everywhere, i.e.
is there  a zero-one law for $\sum f(nx)$?}

This question also appeared in a paper of J. A. Haight \cite{[H1]}.

In  \cite{[BM1]}
the first author and D.~Mauldin gave a negative answer to this question:

\begin{theorem}\label{*HWth} There exists a
measurable function $f:(0,+ {\infty})\to \{0,1\}$ and
two nonempty intervals $I_{F}, \ I_{{\infty}} {\subset} [{1\over 2},1)$
such that for every $x\in I_{{\infty}}$ we have $\sum_{n=1}^{{\infty}}f(nx)=+ {\infty}$
and for almost every $x\in I_{F}$ we have $\sum_{n=1}^{{\infty}}f(nx)<+ {\infty}.$
The function $f$ is the characteristic function of an open set
$E$.\end{theorem}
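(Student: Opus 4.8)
The plan is to construct the open set $E$ so that its divergence set contains a whole interval, while on a disjoint interval a Borel--Cantelli estimate forces convergence almost everywhere. Write $f=\chi_E$ and, for $n\in\N$, put $A_n=\{x: nx\in E\}$, which is open since $E$ is. Then $s(x)=\sum_{n=1}^\infty f(nx)=\#\{n: x\in A_n\}$, so
\[
D(f)=\{x: s(x)=+\infty\}=\limsup_{n\to\infty}A_n=\bigcap_{N=1}^\infty\bigcup_{n\ge N}A_n .
\]
Hence it suffices to produce $E$ and intervals $I_\infty,I_F\subset[\tfrac12,1)$ with: (A) for every $N$ one has $I_\infty\subset\bigcup_{n\ge N}A_n$; and (B) $\sum_{n=1}^\infty\mu(A_n\cap I_F)<+\infty$. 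Indeed (A) gives $I_\infty\subset D(f)$, i.e.\ divergence at every point of $I_\infty$, while (B) together with the Borel--Cantelli lemma gives $\mu(\limsup_n(A_n\cap I_F))=0$, i.e.\ convergence almost everywhere on $I_F$.

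For the divergence side I would build $E$ in generations $E=\bigcup_k G_k$, where $G_k$ is a finite union of open intervals placed at heights in a window $[Y_k,Z_k]$ with $Y_k\to\infty$ and $Z_k<Y_{k+1}$, so that each generation is seen only by a block of indices $n$ tending to infinity; this reduces the tail condition (A) to the single requirement that in every generation $\bigcup_n A_n\supset I_\infty$. The basic covering principle is that for $x\in[\tfrac12,1)$ consecutive multiples satisfy $(n+1)x-nx=x\le 1$, so the orbit $\{nx\}$ meets every height-window of length $\ge 1$; thus a single interval of length $\approx 1$ placed high up is hit by the orbit of every $x\in I_\infty$, already securing (A) for that generation with all indices large. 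Taking the heights $Y_k$ to increase rapidly, each $x\in I_\infty$ then gains at least one new term per generation and $s(x)=+\infty$ on all of $I_\infty$.

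The difficulty is that the very intervals guaranteeing (A) also feed $I_F$: a length-$\approx 1$ interval high up is met by about $1/x$ multiples of almost every $x\in I_F$ as well, so a crude construction yields $\sum_n\mu(A_n\cap I_F)$ equal to a positive constant per generation and hence divergent. This is the main obstacle, and it is where the content of the theorem lies: one must exploit the asymmetry that $I_\infty$ has to be covered in \emph{every} generation, whereas on $I_F$ it suffices that \emph{almost every} point eventually escapes. Concretely I would replace each crude length-$1$ interval by a finely tuned union of much shorter intervals --- a ``comb'' whose spacing is adapted to the slopes in $I_\infty$ --- so that the orbit of each $x\in I_\infty$ still meets $G_k$ (by an approximate resonance / three-distance argument over the length of the comb), while the set of $x\in I_F$ whose orbit meets $G_k$ has small measure $\mu_k$. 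Choosing the generations sparse enough that $\sum_k\mu_k<\infty$, and checking that inter-generation interactions stay summable, then yields (B).

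Executing this last step rigorously is the real work. One has to quantify, via a three-distance / continued-fraction estimate, how long a comb tuned to $I_\infty$ keeps catching every slope in $I_\infty$ before the slopes in $I_F$ drift out of phase, and to control the total $I_F$-mass across all generations and all indices $n$ (including the off-resonance hits) so that the single series in (B) converges. Granting this bookkeeping, (A) and (B) both hold, so $I_\infty\subset D(f)$ and $\mu(D(f)\cap I_F)=0$; and since $E$ is by construction a union of open intervals, $f=\chi_E$ is the characteristic function of an open set, which completes the proof.
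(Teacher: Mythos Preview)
This theorem is not proved in the present paper at all: it is quoted from \cite{[BM1]} as background. The only information the paper gives about its proof is that the argument in \cite{[BM1]} goes through Kronecker's theorem (stated here as Theorem~\ref{*kron}); equivalently, after the change of variables $y=\log x$, $g=f\circ\exp$, one is showing that $\Lambda=\{\log n\}$ is type~$2$, and the paper records that this was done in \cite{[BKM1]} via Theorem~\ref{*BKMth5} and in \cite{[BM1]} via Kronecker, both of which exploit the $\Q$-linear independence of $\{\log p:\ p\ \text{prime}\}$.

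Your write-up is a plan, not a proof, and you say so yourself (``Executing this last step rigorously is the real work'', ``Granting this bookkeeping''). The Borel--Cantelli framework in (A)/(B) is set up correctly, and you have identified the genuine obstacle: a single interval of length $\approx 1$ placed high up is hit equally well from $I_\infty$ and from $I_F$. But the proposed resolution---a comb ``tuned to the slopes in $I_\infty$'' via a three-distance\,/\,continued-fraction argument---does not, as stated, explain how one interval of slopes can be favored over another. For a comb of spacing $\delta$ sitting in a window $[Y,Y+L]$, the orbit $\{nx\}$ restricted to that window is an arithmetic progression of step $x$, and whether it meets the teeth depends on the residue of $x$ modulo $\delta$ and on $L/x$; as $x$ ranges over an interval this behaves the same way on $I_\infty$ as on $I_F$. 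A purely geometric comb therefore cannot, by itself, separate two fixed subintervals of $[\tfrac12,1)$. Some arithmetic input is needed, and that is exactly what the original proof supplies: on the additive side one chooses, in each generation, a finite batch of primes, uses Kronecker's theorem to pick an integer $p_k$ placing the numbers $p_k\log p$ (for those primes) at prescribed residues while simultaneously forcing the other $\log n$'s to cluster near a lattice, and builds the comb with spacing $1/p_k$ accordingly. This is precisely the mechanism visible in the proof of Theorem~\ref{type1plusalg} in this paper. Without an ingredient of that kind, the step you label ``bookkeeping'' is in fact the whole difficulty, and your outline does not yet close it.
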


In papers \cite{[BKM1]} and \cite{[BKM2]}
Z.~Buczolich, J-P.~Kahane and D.~Mauldin
 considered a more general, additive version of the Haight--Weizs\"aker problem.
Since $\sum_{n=1}^{{\infty}}f(nx)=\sum_{n=1}^{{\infty}}f(e^{\log x+\log n})$,
that is using the function $h=f\circ    \exp$ defined on $ {\ensuremath {\mathbb R}}$ and $\Lambda=\{\log n:
n=1,2,... \}$  they were interested in almost everywhere
convergence questions for the series
$\sum_{{\lambda}\in {\Lambda}}h(x+ {\lambda})$.

In  the original ``multiplicative" version of our problem already Haight
in  \cite{[H2]} started to investigate convergence properties
of series $\sum_{{\lambda}\in {\Lambda}}f( {\lambda} x)$.

In this note the symbol $\Lambda$ will always represent a countably infinite, unbounded set of real numbers which is bounded from below and has no finite accumulation points.

Type 1 and type 2 sets were defined in \cite{[BKM1]}.
Given $\Lambda$ and
a measurable $f: {\ensuremath {\mathbb R}}\to [0,+ {\infty})$, we consider the sum
$$s(x)=\sum_{\lambda\in\Lambda}f(x+\lambda),$$
and the complementary subsets of $ {\ensuremath {\mathbb R}}$:
$$C=C(f,\Lambda)=\{x: s(x)< {\infty}\},\qquad
D=D(f,\Lambda)=\{x:s(x)= {\infty}\}.$$

\begin{definition}\label{def1} The set $\Lambda$ is type $1$ if, for every $f$, either
$C(f,\Lambda)= {\ensuremath {\mathbb R}}$ a.e.\!\! or $C(f,\Lambda)= {\emptyset}$ a.e.~(or equivalently
$D(f,\Lambda)= {\emptyset}$ a.e. or $D(f,\Lambda)= {\ensuremath {\mathbb R}}$ a.e.). Otherwise, $\Lambda$
is type $2$.
For type $2$ sets there are non-negative measurable {\it witness functions}
$f$ such that both $C(f,\Lambda)$ and $D(f,\Lambda)$ are of positive measure.
\end{definition}

That is, for type 1 sets we have a ``zero-one" law for the almost everywhere
convergence properties of the series $\sum_{\lambda\in\Lambda}f(x+\lambda)$,
while for type 2 sets the situation is more complicated.

In our recent paper \cite{[BHMV]}, answering a question from \cite{[BKM1]},
we proved the following theorem:

\begin{theorem}\label{char func} Suppose that $\Lambda$ is type $2$, that is there exists a measurable witness function $f$ such that both $D(f,\Lambda)$ and $C(f,\Lambda)$ have positive measure. Then there exists a witness function $g$ which is the characteristic function of an open set and both $D(g,\Lambda)$ and $C(g,\Lambda)$ have positive measure.\end{theorem}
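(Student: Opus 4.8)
The plan is to produce $g$ as the indicator of a \emph{random} open set whose inclusion ``density'' at a point $y$ is the value $f(y)$ of a given type-$2$ witness, and then to extract a single good realization by a Fubini argument. The guiding heuristic is the Borel--Cantelli lemma: for a fixed $x$ the translates $\{x+\lambda:\lambda\in\Lambda\}$ are pairwise distinct, so if we could include each point $y$ in $U$ independently with probability $f(y)$, the events $E_\lambda=\{x+\lambda\in U\}$ would be independent with $\sum_{\lambda}\mathbb P(E_\lambda)=\sum_\lambda f(x+\lambda)=s(x)$. The second Borel--Cantelli lemma would then give infinitely many hits (so $x\in D(\chi_U)$) exactly when $s(x)=\infty$, and the first Borel--Cantelli lemma would give finitely many hits (so $x\in C(\chi_U)$) when $s(x)<\infty$; integrating the identity $\mathbb P\big(x\in D(\chi_U)\,\triangle\, D(f)\big)=0$ over $x$ via Tonelli would yield $\mu\big(D(\chi_U)\,\triangle\, D(f)\big)=0$ for almost every realization, and hence a witness with $\mu(D(\chi_U)),\mu(C(\chi_U))>0$.

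Before carrying this out I would first reduce to a bounded witness. Replacing $f$ by $\min(f,1)$ changes neither $C$ nor $D$: it cannot increase the sum, and if $s(x)=\infty$ then either infinitely many terms are $\ge 1$ (so the truncated sum is already infinite), or, discarding the finitely many large terms, the remaining terms are unchanged by truncation and still sum to infinity. Thus we may assume $0\le f\le 1$, which makes the ``inclusion probability'' $f(y)$ legitimate.

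The difficulty is that the pointwise-independent random set above is almost surely non-measurable, so the heuristic must be replaced by a genuine construction. Here I would discretize: partition $\R$ into small intervals $I_n$ adapted to the local density of $\Lambda$, let $a_n$ be the average of $f$ over $I_n$, and let $U_n\subset I_n$ be a uniformly placed \emph{open} subinterval of length $a_n|I_n|$, the choices being independent across $n$; then $U=\bigcup_n U_n$ is open and $\chi_U$ is an honest witness candidate. The convergence direction is robust: for $x\in C(f)$ one has $\sum_\lambda\mathbb P(x+\lambda\in U)=\sum_\lambda a_{n(\lambda)}$, which is comparable to $\sum_\lambda f(x+\lambda)<\infty$ once the cells are small enough that the oscillation of $f$ is summable along the orbit (Lebesgue density), so the first Borel--Cantelli lemma applies with no independence needed.

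The main obstacle is the divergence direction, and it is genuinely sensitive to how $\Lambda$ accumulates at infinity. For sets like $\Lambda=\{\log n\}$ the number $k_n$ of translates $x+\lambda$ falling in a fixed cell $I_n$ is unbounded, so grouping by cells one must estimate the probability that $U_n$ meets the finite point set $\{x+\lambda\}\cap I_n$; this probability behaves like $\min(1,k_na_n)$ only when those points are well spread inside $I_n$, whereas clustering of $\Lambda$ within a single cell can collapse it to roughly $a_n$ and destroy the divergence of $\sum_n\mathbb P(\text{hit }I_n)$. Overcoming this requires choosing the partition finely enough, relative to the gaps of $\Lambda$, that each cell typically receives essentially one translate and that $\sum_n\mathbb P(\text{hit }I_n)$ still dominates a divergent tail of $\sum_\lambda f(x+\lambda)$; verifying simultaneously that $f$ is nearly constant on each cell, that the cell-hit events stay independent, and that the probabilities add up to infinity on $D(f)$ is the technical heart of the argument. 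Once this is secured, Tonelli over the product of the probability space and a bounded window $W$ gives $\mu\big((D(\chi_U)\,\triangle\, D(f))\cap W\big)=0$ almost surely, and exhausting $\R$ by windows produces a single open set $U$ with both $C(\chi_U)$ and $D(\chi_U)$ of positive measure, as required.
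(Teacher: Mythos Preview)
The present paper does not prove Theorem~\ref{char func}; it is quoted from \cite{[BHMV]} and only used here as a tool, so there is no proof in this paper against which to compare your proposal.

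Evaluating the proposal on its own: the probabilistic skeleton (random open set built cell by cell, Borel--Cantelli in each direction, Tonelli to extract a realization) is sound and can be made into a proof, but you have misdiagnosed where the work lies. Your worry about the divergence side---clustering of $\Lambda$ inside a single cell collapsing the hit probability---evaporates once you simply take each cell shorter than the local gap of $\Lambda$ (always possible, since $\Lambda$ has no finite accumulation point); then distinct translates $x+\lambda$ land in distinct cells, the events $\{x+\lambda\in U\}$ are genuinely independent with $\mathbb P(x+\lambda\in U)=a_{n(\lambda,x)}=\bar f(x+\lambda)$, and the second Borel--Cantelli lemma applies directly once $\sum_\lambda \bar f(x+\lambda)=\infty$. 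The actual gap is on the side you call ``robust'': both Borel--Cantelli directions need $C(f)=C(\bar f)$ (equivalently $D(f)=D(\bar f)$) a.e.\ on the window $W$, and this does \emph{not} follow from Lebesgue differentiation, which gives only pointwise, not summable, control of $\bar f(x+\lambda)-f(x+\lambda)$ along the orbit. Concretely, with $\Lambda=\mathbb N$ and unit cells, the function $f=\sum_{n\ge 1}\chi_{[n,\,n+1/n]}$ satisfies $(0,1)\subset C(f,\Lambda)$ yet $\sum_n \bar f(x+n)=\sum_n 1/n=\infty$ for every $x\in(0,1)$; smaller cells cure this particular $f$, but you have not shown that one partition handles a.e.~$x$. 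The missing ingredient is an $L^1$ estimate rather than a density one: choose the cell sizes (depending on $f$ \emph{and} on the local density of $\Lambda$) so that $\sum_{\lambda\in\Lambda}\|\bar f-f\|_{L^1(W+\lambda)}<\infty$; then $\sum_\lambda|\bar f-f|(x+\lambda)<\infty$ for a.e.~$x\in W$, forcing $C(f)\cap W=C(\bar f)\cap W$ and $D(f)\cap W=D(\bar f)\cap W$ modulo null sets, after which both Borel--Cantelli directions go through and your Tonelli conclusion is correct.
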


This theorem will be important in this paper as well.

\begin{definition}\label{asydens} The unbounded, infinite  discrete set $\Lambda=\{{\lambda}_{1}, {\lambda}_{2},... \}$, $ {\lambda}_{1}< {\lambda}_{2}<...$ is asymptotically dense if $d_{n}= {\lambda}_{n}- {\lambda}_{n-1}\to 0$, or equivalently:
$$\forall a>0,\quad \lim_{x\to\infty}\#(\Lambda\cap [x,x+a])=\infty.$$
If $d_{n}$ tends to zero monotonically, we speak about
decreasing gap asymptotically dense sets.

If  $\Lambda$ is not asymptotically dense we say that it is asymptotically lacunary.
\end{definition}

We denote by $C^{+}_{0}( {\ensuremath {\mathbb R}})$ the non-negative continuous functions on $ {\ensuremath {\mathbb R}}$  tending to zero in $+ {\infty}$.

By Theorem 4 of \cite{[BKM1]} lacunarity is a sufficient condition for type $2$:
\begin{theorem}\label{*BKMth4}
If $\Lambda$ is asymptotically lacunary, then $\Lambda$ is
type $2$. Moreover, for some $f\in C_{0}^{+}( {\ensuremath {\mathbb R}})$, there exist
intervals $I$ and $J$, $I$ to the left of $J$, such that $C(f,\Lambda)$
contains $I$ and $D(f,\Lambda)$ contains $J.$
\end{theorem}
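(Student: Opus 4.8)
The plan is to construct an explicit witness $f\in C_0^+(\R)$ by planting small continuous bumps inside the large gaps of $\Lambda$ that lacunarity provides, positioning them so that for $x$ in a left interval $I$ \emph{every} translate $x+\lambda_n$ misses all the bumps, while for $x$ in a right interval $J$ one distinguished translate always lands on the plateau of a bump. First I would exploit lacunarity directly: since $\Lambda$ is not asymptotically dense, $d_n=\lambda_n-\lambda_{n-1}\not\to 0$, so there is a $\delta>0$ and infinitely many indices $n$ with $d_n\ge\delta$. Letting $\lambda_{m_k}$ denote the left endpoints of these large gaps along a suitably thinned increasing subsequence, I obtain points $\lambda_{m_k}\to+\infty$ such that each open interval $(\lambda_{m_k},\lambda_{m_k}+\delta)$ is free of points of $\Lambda$, and after thinning I may assume these gap intervals are pairwise disjoint.

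Next I would set $f=\sum_k\phi_k$, where $\phi_k$ is a continuous trapezoidal bump supported in $[\lambda_{m_k}+\delta/4,\lambda_{m_k}+3\delta/4]$ and equal to its peak value $h_k$ on the middle plateau $[\lambda_{m_k}+3\delta/8,\lambda_{m_k}+5\delta/8]$. Choosing $h_k\to 0$ with $\sum_k h_k=+\infty$ (say $h_k=1/k$) makes $f$ nonnegative and continuous, and since the supports are disjoint and escape to $+\infty$ while the heights tend to $0$, one checks $f\in C_0^+(\R)$. I then take $I=(0,\delta/4)$ and $J=(3\delta/8,5\delta/8)$, so that $I$ lies to the left of $J$.

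Divergence on $J$ is then immediate: for $x\in J$ the single term indexed by $m_k$ satisfies $x+\lambda_{m_k}\in[\lambda_{m_k}+3\delta/8,\lambda_{m_k}+5\delta/8]$, hence $f(x+\lambda_{m_k})=h_k$, and as the $m_k$ are distinct indices we get $s(x)\ge\sum_k h_k=+\infty$, so $J\subset D(f,\Lambda)$. For convergence on $I$ I would show no translate meets any bump at all: if $x\in(0,\delta/4)$ and $x+\lambda_n\in\mathrm{supp}\,\phi_j$ for some $j$, then $\lambda_n\in(\lambda_{m_j}+\delta/4-x,\lambda_{m_j}+3\delta/4-x)\subset(\lambda_{m_j},\lambda_{m_j}+\delta)$, which contains no point of $\Lambda$ — a contradiction. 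Thus $s(x)=0$ for $x\in I$, giving $I\subset C(f,\Lambda)$. Since both $C(f,\Lambda)$ and $D(f,\Lambda)$ then have positive measure, $\Lambda$ is type $2$ with the required $C_0^+$ witness.

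The only delicate point is the geometric bookkeeping: the bump must be placed far enough inside the gap, and the widths of $I$ and $J$ chosen consistently with $\delta$, so that a single term hits the plateau for every $x\in J$ while the \emph{entire} empty gap shields every translate for $x\in I$. Once the arithmetic of the fractions of $\delta$ is arranged (the inclusion $(\lambda_{m_j}+\delta/4-x,\lambda_{m_j}+3\delta/4-x)\subset(\lambda_{m_j},\lambda_{m_j}+\delta)$ for $0<x<\delta/4$ is the crux), the rest is routine, the two remaining ingredients being the use of distinct indices $m_k$ for the divergence lower bound and the pair of conditions $h_k\to 0$, $\sum_k h_k=+\infty$ for membership in $C_0^+(\R)$.
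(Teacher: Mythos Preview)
The paper does not actually prove this theorem: it is quoted as Theorem~4 of \cite{[BKM1]} and used as background, with no argument supplied in the present paper. So there is no in-paper proof to compare against.

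That said, your construction is correct and is essentially the standard one. The key points all check out: from lacunarity you extract $\delta>0$ and indices $m_k\to\infty$ with $(\lambda_{m_k},\lambda_{m_k}+\delta)\cap\Lambda=\emptyset$; after thinning so that these gap intervals are pairwise disjoint, the trapezoidal bumps $\phi_k$ supported in $[\lambda_{m_k}+\delta/4,\lambda_{m_k}+3\delta/4]$ with heights $h_k=1/k$ yield an $f\in C_0^+(\R)$. For $x\in J=(3\delta/8,5\delta/8)$ the term $\lambda_{m_k}\in\Lambda$ gives $f(x+\lambda_{m_k})=h_k$, so $s(x)\ge\sum_k 1/k=\infty$. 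For $x\in I=(0,\delta/4)$, any $\lambda_n$ with $x+\lambda_n\in\mathrm{supp}\,\phi_j$ would satisfy $\lambda_{m_j}<\lambda_n<\lambda_{m_j}+\delta$, contradicting the choice of gap; hence $s(x)=0$ on $I$. Both $C(f,\Lambda)$ and $D(f,\Lambda)$ thus contain nondegenerate intervals, with $I$ to the left of $J$, and $\Lambda$ is type~2.
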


In \cite{[BKM1]} we gave some necessary and some sufficient conditions
for  a set $ {\Lambda}$ being type 2. A complete characterization of type 2
sets is still unknown. We recall here
from \cite{[BKM1]}
the theorem concerning the Haight--Weizs\"aker problem. This contains the additive version of the result of Theorem \ref{*HWth}
along with some auxiliary information.

\begin{theorem}\label{*BKMHWth} {The set $\Lambda=\{\log n:
n=1,2,... \}$ is type $2$. Moreover, for some $f\in
C_{0}^{+}( {\ensuremath {\mathbb R}}),$ $C(f, {\Lambda})$ has full measure on the half-line
$(0,\infty)$ and $D(f, {\Lambda})$ contains the half-line $(-\infty,0)$.
If for each $c, \int_c^{+\infty}e^yg(y)dy < +\infty$, then $C(g,\Lambda) =  {\ensuremath {\mathbb R}}$ a.e.
If $g\in C_{0}^{+}( {\ensuremath {\mathbb R}})$ and $C(g,\Lambda)$ is not of the first (Baire) category, then
$C(g, {\Lambda})= {\ensuremath {\mathbb R}}$ a.e. Finally, there is some $g\in C_{0}^+( {\ensuremath {\mathbb R}})$ such that $C(g,\Lambda) =  {\ensuremath {\mathbb R}}$ a.e. and
$\int_0^{+\infty}e^yg(y)dy = +\infty$.
}
\end{theorem}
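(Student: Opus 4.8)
The plan is to prove the five assertions in increasing order of difficulty, deriving almost everything from a single computation: the comparison of $s(x)=\sum_{n=1}^{\infty}g(x+\log n)$ with a weighted integral of $g$. Since the number of $n$ with $\log n\in[v,v+dv]$ is asymptotically $e^{v}\,dv$, one expects $s(x)\approx e^{-x}\int_{x}^{\infty}e^{u}g(u)\,du$; I would only make this precise in the integrated form actually needed. For $a<b$, Tonelli gives
\beq
\int_{a}^{b}s(x)\,dx=\sum_{n=1}^{\infty}\int_{a+\log n}^{b+\log n}g(u)\,du=\int_{\R}g(u)\,N(u)\,du,\qquad N(u)=\#\{n\ge 1:\ e^{u-b}\le n\le e^{u-a}\}.
\eeq
Because $N(u)=0$ below a threshold and $c_{1}e^{u}\le N(u)\le c_{2}e^{u}$ for large $u$ (with $c_{i}$ depending on $a,b$), this sandwiches $\int_{a}^{b}s$ between constant multiples of $\int_{u_{0}}^{\infty}e^{u}g(u)\,du$. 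The convergence criterion (third claim) is then immediate: if $\int_{c}^{\infty}e^{y}g(y)\,dy<\infty$ for every $c$, then $\int_{a}^{b}s<\infty$ for all $a<b$, so $s<\infty$ a.e.\ on each $(a,b)$ and $C(g,\Lambda)=\R$ a.e.

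For the Baire-category dichotomy (fourth claim) I would combine the sandwich with lower semicontinuity. For continuous $g\ge 0$ the partial sums are continuous and increase to $s$, so $s$ is lower semicontinuous and $C(g,\Lambda)=\{s<\infty\}=\bigcup_{N}\{s\le N\}$ is $F_{\sigma}$. If $C(g,\Lambda)$ is not of the first category then not every closed set $\{s\le N\}$ is meager, so some $\{s\le N\}$ has nonempty interior and contains an interval $(a,b)$. Then $\int_{a}^{b}s\le N(b-a)<\infty$, the sandwich forces $\int_{u_{0}}^{\infty}e^{u}g(u)\,du<\infty$, and since $g$ is bounded on compacta the hypothesis of the convergence criterion holds for every $c$; hence $C(g,\Lambda)=\R$ a.e. The same conclusion is visible geometrically from the monotonicity $s(x+\log m)=\sum_{m\mid k}g(x+\log k)\le s(x)$, valid for every $m\in\nn$, which propagates $\{s\le N\}\supseteq(a,b)$ along the dense semigroup $\{\log m\}$ to cover a half-line.

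The two existence statements are the substantial part and the real obstacle. In both one takes $g$ — or, invoking Theorem \ref{char func}, the indicator of an open set — supported on short bumps of height $H_{k}$ on intervals $J_{k}$ of length $\delta_{k}$ at positions $y_{k}\to\infty$, and balances the integral weight $\sum_{k}H_{k}\delta_{k}e^{y_{k}}$ (tied by the sandwich to $\int e^{y}g$ and to $\int_{a}^{b}s$) against the measure of the set of $x$ that meet each bump. The naive obstruction is that when each bump is hit at most once these two quantities are comparable, so that $\int e^{y}g=\infty$ appears to force $\int s=\infty$, and with $g$ bounded a simple Borel--Cantelli balance is impossible; the way around this, and the crux of the construction, is to exploit the multiplicative structure of $\Lambda$. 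By placing the bumps at arithmetically related positions — so that a single $x$ with $e^{x}$ suitably rational makes $x+\log n$ land in many bumps simultaneously — one makes the hitting events strongly correlated: for each scale the hits cluster onto a small exceptional set $B_{k}$, with $\sum_{k}\mu(B_{k})<\infty$. For the non-necessity statement (fifth claim) this gives $C(g,\Lambda)=\R$ a.e.\ by Borel--Cantelli even though $\int_{0}^{\infty}e^{y}g=+\infty$, since the infinitude of $\int s$ is carried entirely by the tall clustered spikes of $s$ on the null set $\limsup_{k}B_{k}$. For the half-line witness (second claim) the same mechanism is tuned so that the clustering is unavoidable for every $x<0$, where the translated points have density $e^{-x}>1$ and leave no room to miss the bumps, but avoidable for a.e.\ $x>0$, where the density $e^{-x}<1$ does leave room; this yields $D(f,\Lambda)\supseteq(-\infty,0)$ and $C(f,\Lambda)=\R$ a.e.\ on $(0,\infty)$, whence $\Lambda$ is type $2$. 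Constructing the sequences $y_{k},\delta_{k},H_{k}$ and the arithmetic placement that make the split fall exactly at $x=0$, uniformly over each half-line, is the hard technical core; it is the additive reformulation of the Haight--Weizs\"aker construction behind Theorem \ref{*HWth}.
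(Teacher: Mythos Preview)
The paper does not contain a proof of this theorem at all: it is stated in the introduction purely as a quoted result from \cite{[BKM1]} (``We recall here from \cite{[BKM1]} the theorem concerning the Haight--Weizs\"aker problem''), and none of the later sections return to it. So there is no proof in the paper against which to compare your proposal.

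On the substance of your sketch: the Tonelli computation and the sandwich $c_{1}e^{u}\le N(u)\le c_{2}e^{u}$ are correct and dispose of the third claim cleanly, and your lower-semicontinuity/$F_{\sigma}$ argument for the Baire-category dichotomy is also sound. For the two existence statements, however, your proposal is a plan rather than a proof: you correctly identify that the arithmetic structure of $\Lambda$ must be used to make the hitting events cluster, but the phrases ``placing the bumps at arithmetically related positions'' and ``the clustering is unavoidable for every $x<0$\dots but avoidable for a.e.\ $x>0$'' are exactly where the real work lies, and you have not supplied the construction. In particular, the heuristic that the ``density $e^{-x}$'' crosses $1$ at $x=0$ does not by itself force the split to fall there --- one has to engineer the bump placements and widths to make this happen, and that is the content of the Haight--Weizs\"aker/Buczolich--Mauldin construction you cite at the end. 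As written, the proposal establishes the two analytic claims and gives a credible outline for the two constructive ones, but it is not a complete proof of the latter.
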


One might believe that for type 2 sets $\Lambda$ the sets $C(f, {\Lambda})$, or $D(f, {\Lambda})$ are always half-lines
if they differ from $ {\ensuremath {\mathbb R}}$.
Indeed in \cite{[BKM1]} we obtained results in this direction.
A number $t>0$ is called a translator of $ {\Lambda}$ if $( {\Lambda}+t) {\setminus}
 {\Lambda}$ is finite.
Condition $(*)$ is said to be satisfied if $T( {\Lambda})$, the
countable additive
semigroup of translators of $ {\Lambda}$, is dense in $ {\ensuremath {\mathbb R}}^{+}$.
We recall Proposition 3 of \cite{[BKM1]}:

\begin{proposition}\label{*proptrans}
Suppose that condition $(*)$ is
satisfied ($\Lambda$ has arbitrarily small translators).
Then the topological closure of $C$ (resp. $D$) is either $ {\emptyset}$, or
$ {\ensuremath {\mathbb R}}$, or else a closed right half-line (resp. left half-line). The same
holds for the support of $\pmb{1}_{C}$ (resp. $\pmb{1}_{D}$) meaning the
smallest closed set $\pmb{C}$ carrying $C$ (resp. $\pmb{D}$ carrying $D$) except for a null set.
The interior of $\pmb{C}$ (resp. $\pmb{D}$) is either $ {\emptyset}$, or $ {\ensuremath {\mathbb R}}$, or else an
open right (resp. left) half-line.
\end{proposition}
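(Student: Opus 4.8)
The plan is to show that each translator of $\Lambda$ acts as a one-sided symmetry of the convergence set $C$, and then to upgrade this, via condition $(*)$, to full one-sided translation invariance of the closed sets $\overline{C}$ and $\pmb{C}$ (and, symmetrically, of $\overline{D}$ and $\pmb{D}$). First I would establish the key monotonicity lemma: for every translator $t\in T(\Lambda)$ one has $C+t\subseteq C$, equivalently $D-t\subseteq D$. To see this, write $s(x+t)=\sum_{\lambda\in\Lambda}f(x+t+\lambda)=\sum_{\mu\in\Lambda+t}f(x+\mu)$ and split $\Lambda+t$ into $(\Lambda+t)\cap\Lambda$ and the finite remainder $F_t:=(\Lambda+t)\setminus\Lambda$. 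The sum over $(\Lambda+t)\cap\Lambda$ is dominated by $s(x)$ since $(\Lambda+t)\cap\Lambda\subseteq\Lambda$, while the sum over the finite set $F_t$ is a finite sum of the values $f(x+\mu)$, each finite because $f$ takes values in $[0,+\infty)$. Hence $s(x)<+\infty$ forces $s(x+t)<+\infty$, i.e.\ $x\in C\Rightarrow x+t\in C$. This lemma is the only place where the translator hypothesis is used; its one genuine care-point (and the main thing to verify, though not a deep obstacle) is that the correction term over $F_t$ is finite at \emph{every} point, so that the implication holds pointwise and not merely almost everywhere.

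Next I would transfer this inclusion to the topological closure and to the measure-theoretic support. For the closure, $C+t\subseteq C$ gives at once $\overline{C}+t\subseteq\overline{C}$, since translation is a homeomorphism. For the support $\pmb{C}$, recall $x\in\pmb{C}$ iff every neighborhood of $x$ meets $C$ in positive measure; translating such a neighborhood by $t$ and using $C+t\subseteq C$ together with translation invariance of $\mu$ shows that every neighborhood of $x+t$ also meets $C$ in positive measure, so $\pmb{C}+t\subseteq\pmb{C}$. Now I invoke condition $(*)$: given any real $s>0$, pick translators $t_n\to s$; if $A$ denotes either $\overline{C}$ or $\pmb{C}$, then $A$ is closed and $x\in A$ implies $x+t_n\in A+t_n\subseteq A$, whence letting $n\to\infty$ yields $x+s\in A$. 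Thus both $\overline{C}$ and $\pmb{C}$ satisfy $A+s\subseteq A$ for every $s\ge 0$.

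Finally I would classify the nonempty closed sets $A$ with $A+s\subseteq A$ for all $s\ge0$. Put $a:=\inf A$ (possibly $-\infty$). If $a=-\infty$, then for every $y\in\R$ there is $x\in A$ with $x<y$, so $y=x+(y-x)\in A+(y-x)\subseteq A$, giving $A=\R$. If $a$ is finite, the same argument shows $(a,\infty)\subseteq A\subseteq[a,\infty)$, and closedness forces $A=[a,\infty)$, a closed right half-line. Together with the empty case this proves the statement for $\overline{C}$ and $\pmb{C}$; the corresponding statement for $\overline{D}$ and $\pmb{D}$ follows from the symmetric inclusion $D-t\subseteq D$, which makes these sets invariant under all nonnegative \emph{leftward} shifts and hence equal to $\emptyset$, $\R$, or a closed left half-line. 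The claim about interiors is then immediate, as the interior of $\emptyset$, $\R$, and $[a,\infty)$ is respectively $\emptyset$, $\R$, and the open right half-line $(a,\infty)$ (and dually for left half-lines).
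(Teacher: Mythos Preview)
Your argument is correct. Note, however, that the present paper does not give its own proof of this proposition: it is only \emph{recalled} here as Proposition~3 of \cite{[BKM1]}, so there is no in-paper proof to compare against. Your approach---the pointwise monotonicity $C+t\subseteq C$ for each translator $t$ (using finiteness of $(\Lambda+t)\setminus\Lambda$ and of the values of $f$), its passage to $\overline{C}$ and $\pmb{C}$, then density of $T(\Lambda)$ plus closedness to get full right-shift invariance, and finally the elementary classification of right-shift-invariant closed sets---is exactly the natural route and matches the argument in \cite{[BKM1]}.
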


Determining the structure of convergence and divergence sets for type $2$ sets
is an interesting problem. In the recent paper \cite{[BMV]} we proved
the following theorem:

\begin{theorem}\label{*thdflg}
There is a strictly monotone increasing unbounded sequence $(\lambda_0,\lambda_1,\ldots)=\Lambda$ in $ {\mathbb {R}}$ such that  $\lambda_{n}-\lambda_{n-1}$ tends to $0$ monotonically,
that is $ {\Lambda}$ is a decreasing gap asymptotically dense set,
such that for every open set $G\subset {\mathbb {R}}$ there is a function $f_G: {\ensuremath {\mathbb R}}\to [0,+ {\infty})$ for which
\begin{equation}\label{tetelbeli}
\mu\left(\left\{x\notin G :  \sum_{n=0}^\infty  f_G(x+\lambda_n)=\infty\right\}\right)=0, \text{    and    }
 \end{equation}
\begin{equation}\label{*conv}
 \text{    $\sum_{n=0    }^\infty f_G(x+\lambda_n) =\infty$ for every $x\in G$,}
\end{equation}
moreover
$f_G=\chi_{U_G}$ for a closed set $U_G\subset {\mathbb {R}}$.
By \eqref{tetelbeli} and \eqref{*conv} we have $D(f_G, {\Lambda})\supset G$, and
$C(f_G, {\Lambda})= {\ensuremath {\mathbb R}} {\setminus} G$ modulo sets
of measure zero.

One can also select a $g_G\in C_{0}^{+}( {\ensuremath {\mathbb R}})$ satisfying \eqref{tetelbeli} and
\eqref{*conv} instead of $f_G$.
\end{theorem}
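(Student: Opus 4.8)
The plan is to fix once and for all a single asymptotically dense, decreasing-gap set $\Lambda=\{\lambda_n\}$, and then, for each open $G$, to build the witness as $f_G=\chi_{U_G}$ for a suitable closed $U_G$. Writing $s(x)=\sum_n\chi_{U_G}(x+\lambda_n)=\#(\Lambda\cap(U_G-x))$, conditions \eqref{*conv} and \eqref{tetelbeli} become: $\#(\Lambda\cap(U_G-x))=\infty$ for \emph{every} $x\in G$, and $\#(\Lambda\cap(U_G-x))<\infty$ for a.e.\ $x\notin G$. Equivalently, with $V_n=U_G-\lambda_n$, one wants $G\subseteq\limsup_n V_n$ while $\mu\big((\limsup_n V_n)\setminus G\big)=0$. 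Since $G$ is a countable disjoint union of open intervals, I would first design the mechanism for a single component and then assemble $U_G$ from the pieces, checking that the exceptional overflow sets combine to a null set.

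For the convergence half I would use the Borel--Cantelli lemma. Because $G$ is fixed, $(\limsup_n V_n)\setminus G=\limsup_n(V_n\setminus G)$, so it suffices to arrange $\sum_n\mu\big((U_G-\lambda_n)\setminus G\big)<\infty$; then the overflow limsup is null and $s(x)<\infty$ for a.e.\ $x\notin G$. The essential design decision here is forced by asymptotic density: if $U_G$ contained infinitely many intervals of length bounded below, then as soon as $d_n$ drops under that length the orbit $\{x+\lambda_n\}$ would be dense enough to meet each of them, giving $s\equiv\infty$ everywhere. Hence $U_G$ must be built from intervals whose lengths shrink to $0$, and it is exactly in this shrinking-interval regime that a Borel--Cantelli estimate can make the off-$G$ overflow summable.

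For the divergence half I would arrange, at infinitely many increasingly fine scales, a family of intervals of $U_G$ that ``sweep across'' each component of $G$, so that for every $x\in G$ (including $x$ arbitrarily close to $\partial G$) the asymptotically dense orbit $\{x+\lambda_n\}$ meets $U_G$ once per scale, hence infinitely often. The two halves pull in opposite directions: covering every point of $G$ at every scale tends to create overflow just outside $G$, while summability of the overflow forbids persistent overflow near $\partial G$. Reconciling them is the crux, and I expect it to be the main obstacle. I would resolve it by placing, at each scale, the covering intervals and the corresponding elements of $\Lambda$ so that the unavoidable overflow is pushed far from $G$ and redistributed from scale to scale, with the scale parameters growing fast enough that the total overflow measure is summable and its limsup null; the genericity needed to prevent the overflow from resonating back onto a fixed positive-measure set is what the algebraic-independence-type choices secure.

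The genuinely hard step is the simultaneous construction of the one universal $\Lambda$: it must have monotonically decreasing gaps tending to $0$ (so it is dense enough to cover every point of every $G$) and yet, for each $G$ separately, admit a shrinking-interval set $U_G$ whose overflow off $G$ is Borel--Cantelli-summable. I would carry this out by an inductive hierarchical multi-scale construction in which the block structure of $\Lambda$ and the scales of $U_G$ are interleaved, the decreasing-gap requirement being met by gluing successive blocks with carefully chosen transition gaps. Finally, to obtain $g_G\in C_0^+(\R)$ I would replace each interval of $U_G$ by a continuous bump supported in a slightly enlarged interval, with heights tending to $0$ at $+\infty$ (so that $g_G\in C_0^+(\R)$) but chosen so that the heights collected along the hitting times of each $x\in G$ still sum to $+\infty$, preserving \eqref{*conv}, while the smoothing only enlarges the overflow by a controlled amount, preserving \eqref{tetelbeli}.
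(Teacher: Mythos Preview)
The paper does not prove Theorem~\ref{*thdflg}; it is quoted from \cite{[BMV]} as background, with no proof given here. So there is no ``paper's own proof'' to compare your proposal against.

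As for the proposal itself: what you have written is a strategic outline, not a proof. You correctly reformulate the problem as $G\subseteq\limsup_n(U_G-\lambda_n)$ together with $\mu\big((\limsup_n(U_G-\lambda_n))\setminus G\big)=0$, and you correctly identify Borel--Cantelli as the natural tool for the convergence half and the central tension between full coverage of $G$ and summable overflow near $\partial G$. But the sentences ``I would resolve it by placing\ldots'' and ``I would carry this out by an inductive hierarchical multi-scale construction'' are precisely where the entire content of the theorem lives, and you have not supplied any of it. In particular, the hardest requirement---that a \emph{single} decreasing-gap $\Lambda$ work simultaneously for \emph{every} open $G$, including those with infinitely many components accumulating arbitrarily---is where your outline is vaguest; you give no mechanism explaining why one fixed block structure of $\Lambda$ should accommodate arbitrary $G$. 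Your invocation of ``algebraic-independence-type choices'' also looks misplaced: that circle of ideas (Theorem~\ref{*BKMth5}, Remark~\ref{*rembkm5}, Theorems~\ref{type1plusalg} and~\ref{alg indep type 1}) is about a different phenomenon in this paper, and there is no indication that Kronecker-type arguments are what drives the construction in \cite{[BMV]}. Until the multi-scale construction is actually written down and both halves verified, this remains a plan rather than a proof.
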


In this paper two examples from \cite{[BKM1]}, quoted in this paper as
Examples \ref{*exdyada}
and  \ref{*exdyadb}
will play
an important role:

\begin{example}\label{*exdyada}
Set $\Lambda=\cup_{k \in {{{\ensuremath {\mathbb N}}}}}\Lambda_{k},$ where $\Lambda_{k}=2^{-
k} {{{\ensuremath {\mathbb N}}}}\cap [k,{k+1}).$
In Theorem 1 of \cite{[BKM1]} it is proved that $\Lambda$ is type $1$.
In fact, in a slightly more general version it is shown that
if $(n_{k})$ is an increasing sequence of
positive integers  and $\Lambda=\cup_{k\in  {{{\ensuremath {\mathbb N}}}}}\Lambda_{k}$ where $\Lambda_{k}=2^{-
k} {{{\ensuremath {\mathbb N}}}}\cap [n_{k},n_{k+1})$
then $\Lambda$ is type $1$.
\end{example}

In \cite{[BHMV]} we studied the effect of randomly deleting elements
of $ {\Lambda}$.
Let $0<p<1$.  Then we say that $\Lambda\subset \widetilde{\Lambda}$ is chosen with probability $p$ from $\widetilde{\Lambda}$ if for each $\lambda \in \widetilde{\Lambda}$ the probability that $\lambda \in \Lambda$ is $p$.
Let
$
\widetilde{\Lambda}=\bigcup_{k=1}^\infty (2^{-k} {\mathbb {N}} \cap[k,k+1)).
$
 We know from
Example \ref{*exdyada} that $ {{\widetilde {\Lambda}}}$ is type $1$.
By Theorem 4.3 of \cite{[BHMV]}
if $\Lambda$ is chosen with probability $p$ from $\widetilde{\Lambda}$
then almost surely $\Lambda$ is type $1$.

However as Theorem 4.5 of \cite{[BHMV]} shows, it may happen that
type $1$ sets are converted into type $2$ sets by random deletion:
\begin{theorem}\label{*randt2} Suppose that $(m_k)$ and $(n_k)$ are strictly increasing sequences of
positive integers. For each $k\in\mathbb{N}$, define $\Lambda_k=2^{-m_k} \mathbb{N} \cap [n_k, n_{k+1})$ and let $ {{\widetilde {\Lambda}}}=\bigcup_{k=1}^{\infty}\Lambda_k$. Moreover, fix $0<p<1$ and suppose that $\Lambda$ is chosen  with probability $p$ from $ {{\widetilde {\Lambda}}}$.
Set $q=1-p$.
 For fixed $(m_k)$, if $(n_k)$ tends to infinity sufficiently fast
 then  almost surely $ {\Lambda}$ is type $2$. Notably, if the series $\sum_{k=1}^{\infty}1-\left(1-{q}^{2^{m_k}}\right)^{n_{k+1}-n_k}$ diverges then  almost surely $ {\Lambda}$ is type $2$. \end{theorem}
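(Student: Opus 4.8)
The plan is to exhibit, for almost every realization of the random set $\Lambda$, a witness function of the form $f=\chi_U$ with $U$ a union of short intervals, one placed in each block that happens to contain a completely deleted unit interval. First I would identify the combinatorial event behind the stated series: for a fixed unit interval $[j,j+1)\subset[n_k,n_{k+1})$, the $2^{m_k}$ points of $2^{-m_k}\N$ lying in it are each deleted independently with probability $q$, so the probability that the whole unit interval becomes empty is $q^{2^{m_k}}$; hence the probability $A_k$ that \emph{some} unit interval inside block $k$ is emptied is $1-(1-q^{2^{m_k}})^{n_{k+1}-n_k}$ (up to a harmless correction if I insist the empty interval be interior, say at distance $>3$ from $n_k$ and $n_{k+1}$, which changes the exponent only by a bounded amount). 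Since the blocks $\Lambda_k$ use disjoint point sets, the events $A_k$ are independent, so by the divergence hypothesis and the second Borel--Cantelli lemma, almost surely infinitely many blocks contain an interior empty unit interval. Choosing $(n_k)$ to grow fast enough (so that each term is, say, $\geq \tfrac12$) is exactly what forces this series to diverge for the given fixed sequence $(m_k)$.

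Next I would build the witness. Let $K$ be the (random, a.s.\ infinite) set of blocks having an interior empty unit interval, and for $k\in K$ let $[a_k,a_k+1)$ be the leftmost such interval. I place $U_k=[a_k+\tfrac12,a_k+\tfrac12+\tfrac18)$ and set $U=\bigcup_{k\in K}U_k$, $f=\chi_U$. The interior placement guarantees that for $x$ in the bounded range $(-\tfrac54,\tfrac14)$ a translate $x+\lambda$ with $\lambda\in\Lambda_k$ can only fall into $U_k$, so the blocks decouple and $s(x)=\sum_{k\in K}N_k(x)$, where $N_k(x)=\#\{\lambda\in\Lambda:\ \lambda\in U_k-x\}$ is fed only by $\Lambda_k$. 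A direct check of the geometry shows that for $x\in I:=(-\tfrac14,\tfrac14)$ the shifted interval $U_k-x$ lies inside the empty interval $[a_k,a_k+1)$, whence $N_k(x)=0$; thus $s\equiv 0$ on $I$ for \emph{every} realization, giving $I\subset C(f,\Lambda)$ and $\mu(C)>0$ surely.

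For the divergence side I would use the opposite geometry: for $x\in J:=(-\tfrac54,-\tfrac34)$ the interval $U_k-x$ lies in the right-neighboring unit interval $[a_k+1,a_k+2)$, which is \emph{unconstrained} by the event that $[a_k,a_k+1)$ is the leftmost empty interior interval (that event restricts only intervals to the left of $a_k$). Hence, conditionally, the number of surviving points of $\Lambda$ in $U_k-x$ is binomial with $\asymp 2^{m_k}/8$ trials and success probability $p$, so $P(N_k(x)\geq 1)\geq p$ for all large $k$, independently across $k$. Combining this with $P(k\in K)=P(A_k)$ and using independence across disjoint blocks, I get $\sum_k P\big(k\in K,\ N_k(x)\geq1\big)\geq p\sum_k P(A_k)=\infty$, so by the second Borel--Cantelli lemma $N_k(x)\geq1$ for infinitely many $k\in K$, i.e.\ $s(x)=\infty$, almost surely for each fixed $x\in J$. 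A Fubini argument on the product of the sample space and $J$ then upgrades this to: for almost every realization, $s(x)=\infty$ for a.e.\ $x\in J$, so $\mu(D(f,\Lambda)\cap J)=\mu(J)>0$ a.s. Together with $I\subset C$ this shows $f$ is a witness and $\Lambda$ is type $2$ almost surely. (One may instead invoke Theorem \ref{char func} to phrase the witness as the indicator of an open set.)

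The step I expect to be the main obstacle is the divergence half, specifically disentangling the two layers of randomness: the \emph{location} of the emptied interval (which dictates where $U_k$, and hence the probe $U_k-x$, sits) must be made independent of the \emph{survival counts} in the neighboring interval that force divergence. Using the leftmost interior empty interval and probing strictly to its right is what secures this independence and lets both Borel--Cantelli applications and the final Fubini step go through; the remaining bookkeeping (interior margins to kill cross-block contributions, and the bounded correction to $P(A_k)$) is routine.
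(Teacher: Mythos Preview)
This theorem is not proved in the present paper; it is quoted from \cite{[BHMV]} (as Theorem~4.5 there), so there is no in-paper argument to compare your attempt against.

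Your strategy is nonetheless sound and would yield a correct proof: independent blocks plus the second Borel--Cantelli lemma produce infinitely many blocks containing an interior empty unit interval; placing a short indicator $U_k$ inside each such interval forces $s\equiv0$ on $I$, while probing the right-neighboring (and hence unconditioned) unit interval and applying a second Borel--Cantelli together with Fubini gives $s=\infty$ a.e.\ on $J$ almost surely. Two details you should make explicit rather than wave through as ``routine'': first, the ``bounded correction'' to the exponent coming from the interior restriction really does preserve divergence of the series, but this relies on $\sum_k q^{2^{m_k}}<\infty$ (true since $(m_k)$ is strictly increasing, so $m_k\ge k$), which ensures that the finitely many short blocks and the constant shift in the exponent contribute only a convergent amount; second, the bound $P(N_k(x)\ge1)\ge p$ requires at least one point of $2^{-m_k}\mathbb{N}$ in the length-$\tfrac18$ probe interval $U_k-x$, which holds once $m_k\ge3$ and hence for all large $k$.
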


\begin{example}\label{*exdyadb}
Let
$(n_{k})$ be a given increasing sequence of positive integers.
 By Theorem 3 of \cite{[BKM1]}
there is
an increasing sequence of integers $(m(k))$ such that the set
$\Lambda=\cup_{k\in  {{{\ensuremath {\mathbb N}}}}}\Lambda_{k}$ with $\Lambda_{k}=2^{-m(
k)} {{{\ensuremath {\mathbb N}}}}\cap [n_{k},n_{k+1})$ is type $2$.
\end{example}

Given $x \in  {\mathbb R}$ and a set $A \subset  {\mathbb R}$  we define $x+A=\{x+a: \ a\in A\}$ and for $y \in  {\mathbb R}$ we define
$y-A=\{y-a : \ a\in A\}$.  Similarly for sets $A,B \subset  {\mathbb R}$ we define
$A+B=\{a+b: \  a\in A,\  \, b \in B\}$ and $A-B=\{a-b: \  a\in A, \  \, b \in B\}$.

According to Theorem 6 of \cite{[BKM1]},
type $2$ sets  form a dense
open subset in the box topology of discrete sets while type $1$ sets  form a closed
nowhere dense set. Therefore type $2$ is typical in the Baire
category sense in this topology.
This also shows that it is usually  more difficult to find and verify type $1$ sets.
The question of complete characterization of type $1$ and type $2$
sets is a difficult and unsolved problem. The goal of this paper is to explore
some properties of these sets and provide some more examples of type $1$
and type $2$ sets.

In Theorem 5 of \cite{[BKM1]} we obtained a sufficient condition for type 2 based on
independent elements in $ {\Lambda}$. This is the following result:

\begin{theorem}\label{*BKMth5}
Suppose that there exist three intervals
$I$, $J$, $K$ such that $J=K+I-I$, the interval
$I$ is to the left of $J$, and $\hbox{dist}(I,J)\geq |I|,$ and
two sequences $(y_{j})$
and $(N_{j})$ tending to infinity ($y_{j}\in  {\ensuremath {\mathbb R}}^{+}$,
$N_{j}\in {\ensuremath {\mathbb N}}$) such that, for each $j$, $y_{j}-I$
contains a set of $N_{j}$ points of $\Lambda$
independent from $\Lambda \cap (y_j-J)$ in the sense that
the additive groups generated by these sets
have only $0$ in common. Then $\Lambda$ is type
$2$. Moreover, for some $f\in C_{0}^{+}( {\ensuremath {\mathbb R}})$, $D(f,\Lambda)$ contains $I$ and
$C(f,\Lambda)$ has full measure on $K$.
\end{theorem}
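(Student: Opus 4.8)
The plan is to construct a single witness $f\in C_0^+(\R)$ that diverges everywhere on $I$ and converges a.e.\ on $K$; by Definition~\ref{def1} this makes $\Lambda$ type $2$ and yields the stated localization of $D(f,\Lambda)$ and $C(f,\Lambda)$. After passing to a subsequence of the pairs $(y_j,N_j)$ (so that $N_j$ grows as fast as we like and the windows $y_j+(I-I)$ are pairwise disjoint and escape to $+\infty$), I would build $f=\sum_j f_j$, where each $f_j\ge 0$ is continuous, supported inside $y_j+(I-I)$, and has height $c_j$ with $c_j\to 0$ and $\sum_j c_j=\infty$. Writing the $N_j$ independent points as $\lambda_{j,i}=y_j-a_{j,i}$ with $a_{j,i}\in I$, the basic dictionary is: for $x\in I$ one has $x+\lambda_{j,i}=y_j+(x-a_{j,i})\in y_j+(I-I)$, so block $j$ can be felt on all of $I$; while for $x\in K$ the relation $J=K+(I-I)$ forces any $\lambda$ with $x+\lambda\in y_j+(I-I)$ to lie in $\Lambda\cap(y_j-J)$. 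Crucially, $\mathrm{dist}(I,J)\ge|I|$ guarantees that the \emph{divergence} points $\Lambda\cap(y_j-I)$ translate $K$ entirely to the right of the window, so on $K$ the only interference comes from the \emph{convergence} group $\Lambda\cap(y_j-J)$ — which is exactly the set declared independent from $\{\lambda_{j,i}\}$.

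For the divergence half I would choose the sub-bumps of $f_j$ so that the pullbacks of $\mathrm{supp}(f_j)$ by the maps $t\mapsto t-\lambda_{j,i}$ cover $I$; then every $x\in I$ receives at least $c_j$ from block $j$, and since $\sum_j c_j=\infty$ we get $s(x)=+\infty$ on all of $I$. This is the first (``covering'') direction of a Borel--Cantelli argument and is elementary given the geometry above. For the convergence half I would bound, for each $j$, the measure $\beta_j:=\mu\bigl(\{x\in K:\ x+\lambda\in\mathrm{supp}(f_j)\text{ for some }\lambda\in\Lambda\cap(y_j-J)\}\bigr)$, i.e.\ the portion of $K$ hit by block $j$ through the convergence group, and arrange $\sum_j\beta_j<\infty$. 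The Borel--Cantelli lemma then shows that a.e.\ $x\in K$ lies in only finitely many of these hit-sets, whence $s(x)<\infty$ a.e.\ on $K$ and $C(f,\Lambda)$ has full measure on $K$.

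The whole difficulty is thus concentrated in a single \emph{placement lemma}: within the window $y_j+(I-I)$ one must position $\mathrm{supp}(f_j)$ so that its pullbacks by $\{\lambda_{j,i}\}$ still cover $I$ (needed for divergence, which forces a definite amount of mass to sit over $I$) while its translates $\mathrm{supp}(f_j)-\lambda$, $\lambda\in\Lambda\cap(y_j-J)$, meet $K$ in total measure $\beta_j$ as small as we please. Here I would use the algebraic independence hypothesis: since the additive groups $G_1=\langle\lambda_{j,i}\rangle$ and $G_2=\langle\Lambda\cap(y_j-J)\rangle$ meet only in $0$, the differences $\lambda-\lambda_{j,i}$ are pairwise distinct and carry no rational relations tying the covering positions $a_{j,i}$ to the translation amounts coming from $G_2$. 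Concretely I expect to realize the covering with $N_j$ thin sub-bumps whose target positions can be prescribed independently of the finitely many translates $K+\lambda$, either by a direct transversality/selection argument or, more robustly, by averaging over admissible placements (a Kronecker--Weyl simultaneous-approximation input) to show that the \emph{expected} value of $\beta_j$ tends to $0$ as $N_j\to\infty$, so that some admissible placement achieves, say, $\beta_j\le 2^{-j}$.

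The step I expect to be the main obstacle is exactly this reconciliation of \emph{covering} and \emph{avoidance}: divergence pins a fixed quantity of bump mass over $I$, and naively the $\#(\Lambda\cap(y_j-J))$ translates of that mass could blanket $K$, so without the independence hypothesis $\beta_j$ need not be small. Making precise how $G_1\cap G_2=\{0\}$ forces the covering bumps to be transverse to the $G_2$-translates — and quantifying this so that $\beta_j$ is summable while $\sum_j c_j=\infty$ — is the crux. Once the placement lemma is in hand, it remains only to smooth the $f_j$ into nonnegative continuous bumps and to note that $c_j\to 0$ together with the pairwise disjoint, escaping windows yields $f\in C_0^+(\R)$, completing the construction.
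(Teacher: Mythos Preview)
This theorem is not proved in the present paper; it is Theorem~5 of~\cite{[BKM1]}, quoted here without proof and used as a tool (notably in Theorems~\ref{slow always 2} and~\ref{type1plusalg}). So there is no in-paper proof to compare against directly.

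That said, your outline matches the architecture of the original argument and of the closely related construction that \emph{is} carried out in full in this paper, namely the proof of Theorem~\ref{type1plusalg}. There one builds $f=\mathbf{1}_S$ with $S=\bigcup_k S_k$ supported in disjoint windows, uses a covering argument for divergence on $[\tfrac14,\tfrac34]$ (Claim~\ref{1434}), and Borel--Cantelli on the hit-measures $\mu(D_k),\mu(E_k)$ for a.e.\ convergence on $[3,4]$ (Claim~\ref{34}). Your ``placement lemma'' is exactly the role played there by the choice of the scale $t_k=p_k2^{r_k}$ via Kronecker's theorem: \eqref{alg i/n2} sends the divergence translates to spread-out residues $-i/\mathbf{n}$ (covering), while \eqref{alg 02} sends the convergence translates to residue $\approx 0$ (so the hit-set sits in the thin periodic set $F_k$). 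Your dictionary $J=K+(I-I)$, $\mathrm{supp}(f_j)\subset y_j+(I-I)$, and the separation $\mathrm{dist}(I,J)\ge|I|$ keeping the two $\Lambda$-groups apart, is exactly right.

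One refinement to flag. In Theorem~\ref{type1plusalg} the $\alpha$'s are fully algebraically independent, which makes condition~(B) of Theorem~\ref{*kron} vacuous and lets one prescribe \emph{arbitrary} simultaneous targets. Under the weaker hypothesis $G_1\cap G_2=\{0\}$ of Theorem~\ref{*BKMth5}, relations inside $G_1$ may obstruct the naive prescription $\{t_j\lambda_{j,i}\}\approx i/N_j$; what the hypothesis buys is only that pinning $\{t_j\mu_k\}\approx 0$ for the convergence group imposes no \emph{additional} constraint on the $\lambda$-residues beyond their intrinsic ones. Turning this into ``covering with support of relative density $O(1/N_j)$'' is the genuine content of the placement step; your Kronecker--Weyl/averaging heuristic is pointed in the right direction, but this is precisely the part that cannot be left at the level of expectation and for which you should consult the original proof in~\cite{[BKM1]}.
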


In \cite{[BKM1]} we showed that $\Lambda=\{\log n:
n=1,2,... \}$ is type $2$ by using Theorem \ref{*BKMth5}.

Recall that the set
$\{\alpha_1,\alpha_2,\dots\}$ consists of algebraically independent
numbers if for each $N \in  {\mathbb {N}}$ if $k_1,k_2,\dots,k_N \in  {\mathbb {Z}}$ and $k_1\alpha_1+k_2\alpha_2+\dots +k_N\alpha_N=0$, then
$k_1=k_2=\dots=k_N=0$.

We also recall part of the remark following Theorem 5 in \cite{[BKM1]}:
\begin{remark}\label{*rembkm5}
If $\Lambda$ is asymptotically dense and consists of elements
independent over $ {\ensuremath {\mathbb Q}}$ then using Theorem \ref{*BKMth5} it is easy to show that
$\Lambda$ is type $2.$
\end{remark}

In \cite{[BM1]} it was established that $\Lambda=\{\log n:
n=1,2,... \}$ is type 2 via a corollary of Kronecker's Theorem \cite[p. 53]{[C]}:
\begin{theorem}\label{*kron}
 Assume $\theta_{1},...,\theta_{L}\in {\ensuremath {\mathbb R}}$ and
$(\alpha_{1},...,\alpha_{L})$ is a real vector. The following two statements
are equivalent:
\item{A)} For every $ \epsilon>0$, there exists $p\in {\ensuremath {\mathbb Z}}$ such that
$$||\theta_{j}p-\alpha_{j}||< \epsilon, \hbox{ for } 1\leq j\leq L,$$
where $||x||=\min \{|x-n|: n\in {\ensuremath {\mathbb Z}}\}$.
\item{B)} If $(u_{1},...,u_{L})$ is a vector consisting of integers and
$$u_{1}\theta_{1}+...+u_{L}\theta_{L}\in {\ensuremath {\mathbb Z}},$$
then $$u_{1}\alpha_{1}+...+u_{L}\alpha_{L}\in {\ensuremath {\mathbb Z}}.$$
\end{theorem}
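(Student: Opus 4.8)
The plan is to transfer the statement to the $L$-dimensional torus $\T^{L}=\R^{L}/\Z^{L}$ and recognize it as the double-annihilator (Pontryagin duality) statement for the closed subgroup generated by $\theta=(\theta_{1},\dots,\theta_{L})$. Writing $\bar\theta,\bar\alpha$ for the images of $\theta$ and $\alpha=(\alpha_{1},\dots,\alpha_{L})$ in $\T^{L}$, let
$$H=\overline{\{p\bar\theta:\ p\in\Z\}}\subset\T^{L}$$
be the closure of the cyclic subgroup generated by $\bar\theta$; this is a compact abelian group. Condition A) is exactly the assertion that $\bar\alpha\in H$, since $\|\theta_{j}p-\alpha_{j}\|<\epsilon$ for all $j$ means that $p\bar\theta$ lies within $\epsilon$ of $\bar\alpha$ in $\T^{L}$. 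The characters of $\T^{L}$ are $\chi_{u}(x)=e^{2\pi i\langle u,x\rangle}$ for $u=(u_{1},\dots,u_{L})\in\Z^{L}$, and condition B) will be read off from them.

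First I would dispatch the easy implication A) $\Rightarrow$ B). Suppose $(u_{1},\dots,u_{L})\in\Z^{L}$ satisfies $\sum_{j}u_{j}\theta_{j}\in\Z$. Given $\epsilon>0$, pick $p$ as in A) and integers $n_{j}$ with $|\theta_{j}p-\alpha_{j}-n_{j}|<\epsilon$. Since
$$\sum_{j}u_{j}\alpha_{j}=p\sum_{j}u_{j}\theta_{j}-\sum_{j}u_{j}n_{j}-\sum_{j}u_{j}(\theta_{j}p-\alpha_{j}-n_{j}),$$
the first two terms are integers while the last is bounded in absolute value by $\big(\sum_{j}|u_{j}|\big)\epsilon$. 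Letting $\epsilon\to0$ shows that $\sum_{j}u_{j}\alpha_{j}$ lies within arbitrarily small distance of $\Z$, hence belongs to $\Z$, which is B).

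For the converse B) $\Rightarrow$ A), I would compute the annihilator $H^{\perp}=\{u\in\Z^{L}:\chi_{u}|_{H}\equiv1\}$. Because $\chi_{u}$ is continuous and $H$ is generated by $\bar\theta$, one has $u\in H^{\perp}$ if and only if $\chi_{u}(\bar\theta)=1$, i.e.\ if and only if $\sum_{j}u_{j}\theta_{j}\in\Z$. Thus condition B) says precisely that $\chi_{u}(\bar\alpha)=1$ for every $u\in H^{\perp}$. The desired conclusion $\bar\alpha\in H$ then follows from the double-annihilator identity $H=(H^{\perp})^{\perp}:=\{x\in\T^{L}:\chi_{u}(x)=1\ \text{for all }u\in H^{\perp}\}$ for closed subgroups of the torus.

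The heart of the matter, and the step I expect to be the main obstacle, is this double-annihilator identity; equivalently, the fact that the characters of the compact abelian quotient $\T^{L}/H$ separate its points, so that a point killed by every element of $H^{\perp}$ must already lie in $H$. One clean route is to invoke Pontryagin duality directly. To keep the argument self-contained I would instead appeal to the structure of closed subgroups of $\T^{L}$: after an automorphism induced by a matrix in $\mathrm{GL}_{L}(\Z)$, every closed subgroup takes the form $\T^{a}\times F\times\{0\}^{b}$ with $F$ finite, which makes both $H^{\perp}$ and $(H^{\perp})^{\perp}$ completely explicit and reduces the identity to the one-dimensional facts that only $u=0$ annihilates all of $\T$ and that $(\tfrac1m\Z/\Z)^{\perp\perp}=\tfrac1m\Z/\Z$. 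An alternative avoiding the structure theorem is a Fej\'er-kernel averaging argument: if $\bar\alpha\notin H$ one separates it from $H$ by a nonnegative trigonometric polynomial and uses $\frac1N\sum_{p=1}^{N}\chi_{u}(p\bar\theta)\to\mathbf 1_{\{u\in H^{\perp}\}}$ together with B) to reach a contradiction, the technical cost there being the control of the character sums restricted to $H^{\perp}$.
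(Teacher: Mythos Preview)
The paper does not prove this statement; it quotes Kronecker's theorem from Cassels \cite[p.~53]{[C]} and uses it as a black box in the construction of Theorem~\ref{type1plusalg}. So there is no ``paper's own proof'' to compare against.

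Your argument is correct and is one of the standard modern routes to Kronecker's theorem. The easy direction A) $\Rightarrow$ B) is cleanly done. For B) $\Rightarrow$ A) you correctly identify condition A) with $\bar\alpha\in H$ and condition B) with $\bar\alpha\in (H^{\perp})^{\perp}$, so that the whole content is the double-annihilator identity $H=(H^{\perp})^{\perp}$ for a closed subgroup $H\subset\T^{L}$. This is a special case of Pontryagin duality (characters of the compact abelian group $\T^{L}/H$ separate points), and both of the self-contained routes you sketch---the structure theorem for closed subgroups of $\T^{L}$, or a Fej\'er/Weyl averaging argument---are valid ways to establish it. The classical presentations (as in Cassels) typically proceed more directly via trigonometric sums in the spirit of Weyl, which is essentially your second alternative; your first alternative via the $\mathrm{GL}_{L}(\Z)$-normal form is slicker but imports a nontrivial structural fact. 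Either way, nothing is missing from your outline.
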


This paper is organized in the following way:
In Section \ref{secsubsuper} we begin with Theorem \ref{ez is 2-es} which gives a
sufficient condition for a set to be type $2$ by saying that if the cardinality of $ {\Lambda}$
in subsequent intervals increases with sufficiently large jumps then $ {\Lambda}$
is type $2$. As an application of this theorem
in Theorem \ref{mkthm} we obtain a complete characterization of type $1$
and type $2$ sets which are defined analogously to Examples \ref{*exdyada}
and \ref{*exdyadb}.
Corollary \ref{suru 2-es}
is an immediate consequence of Theorem \ref{ez is 2-es}
and gives an example of a type $2$ set, $ {\Lambda}$ such that any $ {\Lambda}'\supset  {\Lambda}$
is also type $2$. In Theorem \ref{slow always 2} we show that the growth rate assumption given in Corollary  \ref{suru 2-es} can be significantly relaxed
in the case where $\Lambda$ contains sufficiently many
algebraically independent elements.
In Theorem \ref{always type 2} we give an example
of a $ {\Lambda}$ such that every infinite subset of $ {\Lambda}$ and every superset of $ {\Lambda}$ is type $2$.
In Theorem \ref{sandwich} we see that we can have a bi-infinite nested sequence
$ {\Lambda}_{n+1} {\subset}  {\Lambda}_{n}$, $n\in  {\ensuremath {\mathbb Z}}$  such that $ {\Lambda}_{n}$ is type $1$ for odd $n$
and type $2$ for even $n$.

Before writing this note we were not aware of any type $1$ sets
 containing infinitely many algebraically
independent elements and Theorem \ref{*BKMth5} also suggests that
many independent elements lead to type $2$ sets.
This is illustrated by Theorem \ref{type1plusalg} which roughly states
that if we add an infinite set of algebraically independent numbers to a  set from Examples \ref{*exdyada}
and \ref{*exdyadb} to obtain a discrete $ {\Lambda}$ then we always obtain type $2$ sets.
On the other hand, in Theorem \ref{alg indep type 1} we see that there exist
type $1$ sets which contain infinitely many algebraically independent
numbers.

In Section \ref{secminko} we consider unions and Minkowski sums.
From Proposition  \ref{*proptos} we see that unions of type $1$ sets are always type $1$,
while in Proposition \ref{2-esek unioja} we prove that it may happen that the union of two
decreasing gap asymptotically dense type $2$
sets is type $1$.
We see in Theorem \ref{*minksum} that Minkowski sums of type $1$ sets are type $1$.
Finally, in Theorem \ref{*mink2} we prove that there is a type $2$ set $ {\Lambda}$
such that for any infinite discrete $ {\Lambda}'$ the
Minkowski sum set $ {\Lambda}+ {\Lambda}'=\{\lambda+\lambda':\, \lambda\in  {\Lambda},\  \lambda'\in {\Lambda}' \}$ is type $2$.
On the other hand, simple examples show that 
 it may happen that the Minkowski sum of two type $2$ sets is type $1$.


\section{Sub and supersets of type 1 and 2 sets}
\label{secsubsuper}

\begin{theorem}\label{ez is 2-es}
Let $ {\varepsilon}$ be a positive number. 
For every $n\in {\ensuremath {\mathbb Z}}$ we denote the cardinality of $\Lambda\cap [n {\varepsilon},(n+1) {\varepsilon})$ by $a_n$. If
\begin{equation}\label{gyors}
\limsup_{n\rightarrow\infty} \frac{a_n}{a_{n-1}} = \infty
\end{equation}
(where $\frac{0}{0}=0$ and $\frac{c}{0}=\infty$ if $c>0$), then $\Lambda$ is type 2.
\end{theorem}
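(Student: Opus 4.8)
The plan is to produce a witness function directly, using the jump condition to separate a divergence region from a convergence region. First I would pass to a subsequence $(n_k)$ along which $a_{n_k}/a_{n_k-1}\to\infty$ as fast as convenient, say $a_{n_k}\ge 2^k a_{n_k-1}$, and with the $n_k$ so sparse that the blocks $J_m:=[m\varepsilon,(m+1)\varepsilon)$ relevant to distinct $k$ lie far apart, so that the contributions of different $k$ decouple. Writing $s(x)=\sum_\lambda f(x+\lambda)=\sum_k h_k\,g_k(x)$ for a function $f=\sum_k h_k\chi_{U_k}$, where $g_k(x)=\#(\Lambda\cap(U_k-x))$ and $U_k$ is a union of small intervals attached to the points of $\Lambda\cap J_{n_k}$, the two requirements become: (i) a fixed interval $I_D$ on which $g_k(x)$ is bounded below by a fixed proportion of $a_{n_k}$, so that $\sum_k h_k g_k=\infty$ there; and (ii) a fixed interval $I_C$ on which $\int_{I_C} g_k$ is small enough that $\sum_k h_k\int_{I_C} g_k<\infty$, whence $\int_{I_C}s<\infty$ and $s<\infty$ a.e.\ on $I_C$ by Markov's inequality. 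Crucially, the weights $h_k$ are chosen \emph{after} the geometry is fixed.

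Concretely I would take $I_D$ a short interval at the origin and attach to each retained point $p\in\Lambda\cap J_{n_k}$ a bump $[p,p+d)$ of a small fixed width $d$; then for $x\in I_D=[0,d)$ every retained point contributes (take $\lambda=p$), giving $g_k(x)\ge(\text{number retained})$. The interval $I_C$ is the translate of $I_D$ by one block, $I_C$ near $[\varepsilon,\varepsilon+\cdot)$, so that for $x\in I_C$ the translated bumps $U_k-x$ are carried into the \emph{sparse} block $J_{n_k-1}$. The delicate point is to keep $U_k-x$ strictly inside $J_{n_k-1}$: its right end must stay left of $n_k\varepsilon$ (to avoid recapturing the dense block $J_{n_k}$ and producing divergence on $I_C$ as well), and its left end must stay right of $(n_k-1)\varepsilon$ (to avoid spilling into the \emph{uncontrolled} block $J_{n_k-2}$). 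A short computation shows both can be arranged by \emph{deleting a thin end-strip} from the left edge of $J_{n_k}$ before attaching bumps and choosing $I_C$ accordingly; with the spill eliminated one gets $g_k(x)\le a_{n_k-1}$ for $x\in I_C$, so $\int_{I_C} g_k\le |I_C|\,a_{n_k-1}$. Taking for instance $h_k=1/a_{n_k}$, the convergence sum is then controlled by $\sum_k a_{n_k-1}/a_{n_k}\le\sum_k 2^{-k}<\infty$, while the divergence sum is $\sum_k h_k\cdot(\text{retained})=\infty$.

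The main obstacle is that the hypothesis controls only the \emph{cardinalities} $a_m$ and says nothing about the \emph{positions} of $\Lambda$ inside $J_{n_k}$: if all of the $a_{n_k}$ points were concentrated in the deleted end-strip the divergence lower bound on $I_D$ would collapse, and if they straddle the edge they threaten to spill the convergence shadow into $J_{n_k-2}$. I would overcome this via the lacunary/dense dichotomy. If $\Lambda$ is asymptotically lacunary it is already type $2$ by Theorem \ref{*BKMth4}, so there is nothing to prove. If $\Lambda$ is asymptotically dense the gaps tend to $0$, so for large $k$ every sub-interval of $J_{n_k}$ of a fixed length contains points; hence after deleting a thin fixed end-strip a definite positive proportion of the $a_{n_k}$ points survives and the divergence estimate is safe. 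The genuinely technical step is the residual case where the mass of $J_{n_k}$ still piles up near the edge at the working scale; there one passes to a finer scale inside $J_{n_k}$ to locate a well-spread dense sub-block on which the jump over its sparse neighbour persists, while keeping the offset between $I_D$ and $I_C$ common to all $k$.

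Finally, having produced $f$ with $\mu(D(f,\Lambda))>0$ and $\mu(C(f,\Lambda))>0$, the set $\Lambda$ is type $2$ by definition; if a witness that is the characteristic function of an open set is desired, this follows from Theorem \ref{char func}.
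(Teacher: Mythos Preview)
Your construction supports $f$ on tiny neighbourhoods of the individual points of $\Lambda\cap J_{n_k}$, which forces you to control \emph{where} those points sit inside $J_{n_k}$. The step where you assert that asymptotic density implies ``after deleting a thin fixed end-strip a definite positive proportion of the $a_{n_k}$ points survives'' is not correct: asymptotic density says every sub-interval of fixed length eventually contains \emph{some} point of $\Lambda$, but it does not bound the proportion of $\Lambda\cap J_{n_k}$ lying in a fixed strip away from $1$. For instance one may place $a_{n_k}-\lfloor\sqrt{a_{n_k}}\rfloor$ of the points in the deleted strip and only $\lfloor\sqrt{a_{n_k}}\rfloor$ points spread over the rest of $J_{n_k}$; this is still asymptotically dense, yet after deletion only a vanishing fraction survives, and with $h_k=1/a_{n_k}$ the divergence series $\sum_k h_k\cdot(\text{retained})\le\sum_k a_{n_k}^{-1/2}$ may converge. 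Switching to $h_k=1/(\text{retained})$ rescues divergence but kills the convergence estimate, since the hypothesis gives $a_{n_k-1}\le 2^{-k}a_{n_k}$, not $a_{n_k-1}\le 2^{-k}\cdot(\text{retained})$. Your fallback for the ``residual case'' (pass to a finer scale inside $J_{n_k}$) does not help either: at a finer scale the neighbouring sub-block also lies inside $J_{n_k}$, and you have no ratio information between sub-blocks of a single $J_{n_k}$. The outline could be repaired by a pigeonhole step (split each $J_{n_k}$ into three equal thirds, pass to a subsequence on which the same third is always heavy, and adjust $I_C$ to match), but as written this is a genuine gap.

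The paper's proof bypasses the positional issue entirely by never attaching anything to individual points. It first refines the scale to $\varepsilon'=\varepsilon/3$ and checks, by a short chain of inequalities, that the hypothesis $\limsup a_n/a_{n-1}=\infty$ at scale $\varepsilon$ forces $\limsup a'_n/(a'_{n-1}+a'_{n-2}+a'_{n-3})=\infty$ at scale $\varepsilon'$. After rescaling so that $\varepsilon'=1$, it simply takes $f\equiv (a'_{m_k})^{-1}$ on the whole interval $[m_k-2,m_k)$ and $f\equiv0$ elsewhere, along a sparse sequence $(m_k)$ realising the $\limsup$. For $x\in[-2,-1)$ every one of the $a'_{m_k}$ points of $\Lambda\cap[m_k,m_k+1)$ is carried into $[m_k-2,m_k)$ regardless of its position, giving a contribution $\ge1$ for each $k$; for $x\in[0,1)$ only points of $\Lambda\cap[m_k-3,m_k)$ can land there, giving a contribution $\le(a'_{m_k-1}+a'_{m_k-2}+a'_{m_k-3})/a'_{m_k}\le2^{-k}$. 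Only the block cardinalities enter; the $\varepsilon/3$ refinement is exactly the device that makes the two windows fit without any edge-trimming.
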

\begin{proof}
Let $ {\varepsilon}':=\frac{{\varepsilon}}{3}$ and $a'_n:=\#(\Lambda\cap [n {\varepsilon}',(n+1) {\varepsilon}'))$ for every $n\in {\ensuremath {\mathbb N}}$. We will prove that
\begin{equation}\label{gyors_2-es 2}
\limsup_{n\in {\ensuremath {\mathbb N}}} \frac{a'_n}{a'_{{n}-3}+a'_{n-2}+a'_{n-1}} = \infty.
\end{equation}

Proceeding towards a contradiction suppose that there exists a positive number $c$ and $N_{0}\in  {\ensuremath {\mathbb N}}$ such that for every  $n\geq N_{0}$ we have
$$
\frac{a'_n}{a'_{n-3}+a'_{n-2}+a'_{n-1}} < c.
$$
Thus, if $a_{n-1}>0$ and $n\geq N_{0}$ then
\begin{align*}
\frac{a_n}{a_{n-1}} &= \frac{a'_{3n}+a'_{3n+1}+a'_{3n+2}}{a_{n-1}} \leq \frac{a'_{3n}+a'_{3n+1}+c\left(a'_{3n-1}+a'_{3n}+a'_{3n+1}\right)}{a_{n-1}} = \\
&= \frac{ca'_{3n-1}+(c+1)a'_{3n}+(c+1)a'_{3n+1}}{a_{n-1}} \leq  \\
&\leq \frac{ca'_{3n-1}+(c+1)a'_{3n}+(c+1)c\left(a'_{3n-2}+a'_{3n-1}+a'_{3n}\right)}{a_{n-1}} \\
&\leq \frac{ca_{n-1}+(c+1)ca_{n-1}+(c+1)c\left(a_{n-1}+a_{n-1}+ca_{n-1}\right)}{a_{n-1}} = \\
&= c+(c+1)c+(c+1)c(1+1+c),
\end{align*}
which contradicts \eqref{gyors}.

We can assume that $ {\varepsilon}'=1$ since $\Lambda$ and $\frac{1}{{\varepsilon}'}\Lambda$ have the same type.

We construct a function $f$ such that $[0,1)\subset C(f,\Lambda)$ and $[-2,-1)\subset D(f,\Lambda)$. We choose a sequence $\left(m_k\right)$ in $ {\ensuremath {\mathbb N}}$ for which
\begin{equation}\label{a'_{m_k} nagy}
\frac{a'_{m_k}}{a'_{{m_k}-3}+a'_{{m_k}-2}+a'_{{m_k}-1}} \ge 2^k \text{  and  } m_{k+1}-m_k\ge 2
\end{equation}
for every $k\in  {\ensuremath {\mathbb N}}$, and we set $f=\left(a'_{m_k}\right)^{-1}$ on $[m_k-2,m_k)$. Everywhere else let $f=0$.
Then for any $x$ we have
\begin{displaymath}
\sum_{\lambda\in\Lambda}f(x+\lambda)=\sum_{k=1}^{\infty}\quad\sum_{\lambda \in [m_k-2-x,m_k-x)\cap\Lambda} f(x+\lambda).
\end{displaymath}
In order to prove the claim we need to estimate the sum
$$\sum_{\lambda \in [m_k-2-x,m_k-x)\cap\Lambda} f(x+\lambda)$$
for each $k\in\mathbb{N}$, and $x\in[-2,-1)$, or $x\in[0,1)$. First of all, we note that if $x\in[0,1)$, then $x+\lambda\in[m_k-2,m_k)$ implies $\lambda\in[m_k-3,m_k)$, and in this interval $\Lambda$ has $a'_{m_k-1}+a'_{m_k-2}+a'_{m_k-3}$ elements
and $f=\left(a'_{m_k}\right)^{-1}$ on $[m_{k}-2,m_{k})$. As a consequence, by \eqref{a'_{m_k} nagy} we have
\begin{displaymath}
\sum_{\lambda \in [m_k-2-x,m_k-x)\cap\Lambda} f(x+\lambda) \le \left(a'_{m_k-1}+a'_{m_k-2}+a'_{m_k-3}\right)\left(a'_{m_k}\right)^{-1} \le \frac{1}{2^k}
\end{displaymath}
for any $x\in[0,1)$. However, the series $\sum\frac{1}{2^k}$ converges which yields $[0,1)\subseteq C(f,\Lambda)$.

For the other containment we simply notice that the number of terms in
$$
\sum_{\lambda \in [m_k-2-x,m_k-x)\cap\Lambda} f(x+\lambda)
$$
for $x\in[-2,-1)$ is at least $a'_{m_k}$ as $x+\lambda \in [m_k-2,m_k)$ for every $\lambda\in[m_k,m_k+1)$.
 Hence  we obtain
$$
\sum_{\lambda \in [m_k-2-x,m_k-x)\cap\Lambda} f(x+\lambda) \geq a'_{m_k}\left(a'_{m_k}\right)^{-1} = 1
$$
for $x\in[-2,-1)$. As the series $\sum 1$ diverges it follows that $[-2,-1)\subseteq D(f,\Lambda)$. This concludes the proof.
\end{proof}

\begin{corollary}\label{suru 2-es}
For every $n\in {\ensuremath {\mathbb Z}}$ we denote the cardinality of $\Lambda\cap [n,n+1)$ by $a_n$. If
\begin{equation}\label{fast}
\limsup\limits_{n\rightarrow\infty}\frac{a_n}{c^n}=\infty \text{  for every positive $c\in {\ensuremath {\mathbb R  }}$,}
\end{equation}
then $\Lambda\subset\Lambda'$ implies that $\Lambda'$ is type 2.
\end{corollary}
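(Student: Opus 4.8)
The plan is to reduce everything to Theorem \ref{ez is 2-es} applied with $\varepsilon=1$ to the superset $\Lambda'$. Write $a'_n=\#(\Lambda'\cap[n,n+1))$; since $\Lambda'$ is discrete with no finite accumulation point, each $a'_n$ is finite, and the inclusion $\Lambda\subset\Lambda'$ immediately gives $a'_n\ge a_n$ for every $n$. By Theorem \ref{ez is 2-es} (with $\varepsilon=1$, so that its cardinalities are exactly the $a'_n$), it suffices to prove that
\begin{equation*}
\limsup_{n\to\infty}\frac{a'_n}{a'_{n-1}}=\infty .
\end{equation*}

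First I would argue by contradiction: assume this $\limsup$ equals some finite $L$. Fix any $c>L$. Then there is an $N$ with $a'_n/a'_{n-1}<c$ for all $n\ge N$. Using the conventions $\frac{0}{0}=0$ and $\frac{c}{0}=\infty$, this in particular forbids $a'_{n-1}=0<a'_n$ for $n\ge N$, so in every case $a'_n\le c\,a'_{n-1}$ holds for $n\ge N$. Iterating this bound gives $a'_n\le A c^{\,n}$ for all $n\ge N$, where $A=a'_N c^{-N}$. In other words, a finite $\limsup$ of consecutive ratios forces at most exponential growth of $a'_n$.

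The key step is then to contradict the super-exponential lower bound coming from $\Lambda$. Applying hypothesis \eqref{fast} to this particular $c$ yields infinitely many indices $n$ with $a_n/c^{\,n}>A$, hence $a_n>A c^{\,n}$. Choosing such an $n\ge N$, the chain $a'_n\ge a_n>A c^{\,n}\ge a'_n$ is a contradiction. Therefore $\limsup_{n\to\infty}a'_n/a'_{n-1}=\infty$, and Theorem \ref{ez is 2-es} gives that $\Lambda'$ is type $2$.

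Because the entire reduction rests on the elementary observation $a'_n\ge a_n$, there is essentially no serious obstacle here; the only points requiring care are the bookkeeping around the division conventions (making sure that a finite $\limsup$ really does yield $a'_n\le c\,a'_{n-1}$ even when some $a'_{n-1}$ vanish) and the finiteness of each $a'_n$, which uses the discreteness of $\Lambda'$. The heart of the matter is the dichotomy that bounded consecutive ratios are equivalent to at most exponential growth, a property against which \eqref{fast}---a lower bound beating every exponential $c^{\,n}$---simply cannot survive, and this remains true after passing to any superset since enlarging $\Lambda$ only increases the $a_n$.
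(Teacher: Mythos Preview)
Your proposal is correct and follows essentially the same route as the paper: both reduce to Theorem~\ref{ez is 2-es} with $\varepsilon=1$ by observing that a finite $\limsup$ of consecutive ratios forces an exponential bound $a'_n\le Ac^{\,n}$, which contradicts \eqref{fast}. The only cosmetic difference is that the paper first notes that \eqref{fast} is inherited by supersets (via $a'_n\ge a_n$) and then argues with $\Lambda$ itself, whereas you apply the theorem directly to $\Lambda'$; your write-up also spells out the ratio-to-exponential step and the division conventions more carefully than the paper's two-line sketch.
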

\begin{proof}
If $\Lambda\subset\Lambda'$ then $\Lambda'$ also satisfies \eqref{fast}, hence it is enough to prove that $\Lambda$ is type 2.

We will use Theorem \ref{ez is 2-es} with $ {\varepsilon}:=1$. If \eqref{gyors} were not
satisfied, there would be some $c\in {\ensuremath {\mathbb R}}$ such that
$$
\limsup_{n\rightarrow\infty} \frac{a_n}{a_{n-1}} < c,
$$
which contradicts \eqref{fast}.
\end{proof}

The next theorem shows that Corollary \ref{suru 2-es} is sharp in some sense.
Later in Theorem \ref{slow always 2} we prove that assumption \eqref{fast}
can be significantly relaxed for algebraically independent numbers
and the converse of Corollary \ref{suru 2-es} is not true.

Theorem \ref{mkthm} is a much sharper version of Example \ref{*exdyadb} since it gives a
necessary and sufficient condition for a set obtained by the ``dyadic" construction
being type $1$ (or type $2$).

\begin{theorem}\label{mkthm}
Suppose that $(m_k)$ and $(n_k)$ are strictly increasing sequences of positive integers.  For each $k \in  {\mathbb {N}}$, define $\Lambda_k=2^{-m_k} {\mathbb {N}} \cap [n_k,n_{k+1})$ and let $\Lambda=\cup_{k=1}^\infty \Lambda_k$.  Define $  \displaystyle  M=\sup_k\{m_{k+1}-m_k\}$.  Then $\Lambda$ is type 1 if and only if $M < \infty$.

\end{theorem}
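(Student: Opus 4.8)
The plan is to prove the two implications separately; the reverse direction, $M<\infty\Rightarrow$ type $1$, carries essentially all of the difficulty.

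For the easy direction, suppose $M=\infty$. I would apply Theorem \ref{ez is 2-es} with $\varepsilon=1$. For an integer $n$ with $n_k\le n<n_{k+1}$ the block $\Lambda\cap[n,n+1)=2^{-m_k}\mathbb{N}\cap[n,n+1)$ has exactly $2^{m_k}$ points, so $a_n=2^{m_k}$ throughout that range; in particular $a_{n_{k+1}-1}=2^{m_k}$ while $a_{n_{k+1}}=2^{m_{k+1}}$, whence $a_{n_{k+1}}/a_{n_{k+1}-1}=2^{m_{k+1}-m_k}$. Since $M=\infty$ means the increments $m_{k+1}-m_k$ are unbounded, i.e. $\limsup_k(m_{k+1}-m_k)=\infty$, we obtain $\limsup_{n\to\infty}a_n/a_{n-1}=\infty$, and Theorem \ref{ez is 2-es} gives that $\Lambda$ is type $2$.

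Now assume $M<\infty$. By Theorem \ref{char func} it suffices to rule out witnesses of the form $f=\chi_G$ with $G$ open, i.e. to show that for every open $G$ the set $C(\chi_G,\Lambda)$ is, modulo null sets, either $\emptyset$ or $\mathbb{R}$. Writing $s(x)=\#(\Lambda\cap(G-x))$ and using that each block $\Lambda_k$ is finite, $s(x)<\infty$ holds precisely when $\Lambda_k\cap(G-x)=\emptyset$ for all large $k$. The combinatorial engine I would exploit is that translating a block to the right pushes its overflow onto a finer grid: since the $n_k$ are integers and the $m_k$ increase, on each unit strip the shifted coarser grid embeds into the present finer grid, giving $\Lambda+1\subseteq\Lambda$ and hence the pointwise monotonicity $s(x+1)\le s(x)$. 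The same overflow argument applied to $t=2^{-m_{k_0}}$ shows $\Lambda_k+t\subseteq\Lambda_k\cup\Lambda_{k+1}$ for $k\ge k_0$, so that $s(x)<\infty\Rightarrow s(x+t)<\infty$. Thus $C+t\subseteq C$ for all $t$ in the dense additive semigroup generated by $\{2^{-m_{k_0}}\}$, and by the standard argument $C$ is, modulo null sets, $\emptyset$, $\mathbb{R}$, or a right half-line $[a,\infty)$, in agreement with Proposition \ref{*proptrans}. To decide which alternative occurs I would integrate over unit cells: Fubini gives $\int_{[c,c+1)}s(x)\,dx=\int_G W_c(y)\,dy$, where $W_c(y)$ counts the points of $\Lambda$ in $(y-1-c,y-c]$. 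Because every $[n_k,n_{k+1})$ has integer length at least one, a unit shift moves $y$ by at most one block, so $W_{c+1}/W_c\in[2^{-M},1]$; this is exactly where $M<\infty$ enters. Hence $I_c:=\int_{[c,c+1)}s$ is non-increasing in $c$ with any two values comparable up to a finite factor $2^{M|c-c'|}$, so either $I_c<\infty$ for all $c$ or $I_c=\infty$ for all $c$. In the first case $s<\infty$ a.e. on every unit cell, so $C=\mathbb{R}$ mod null and $G$ is not a witness.

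It remains to handle the case $I_c=\infty$ for all $c$, where I must show $C=\emptyset$ mod null, thereby excluding a proper half-line. Here I would adopt the Borel–Cantelli viewpoint signalled by $D=\limsup_k A_k$ with $A_k:=\bigcup_{\lambda\in\Lambda_k}(G-\lambda)$: the comparison above identifies divergence of the cell integrals with $\sum_k\mu(A_k)=\infty$ on each window, and the convergent half of Borel–Cantelli already produced the previous case. For the present case I need the divergent half, namely that $\limsup_k A_k$ has full measure, and this requires a quasi-independence (second moment, Paley–Zygmund/Kochen–Stone) estimate bounding the overlaps $\mu(A_j\cap A_k)$. I expect this to be the main obstacle. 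The mechanism I would try to make rigorous is that the fine, regularly spaced grid smears $G$ so that each $A_k$ is nearly $2^{-m_k}$-periodic, and the hypothesis $M<\infty$ keeps the periods of consecutive $A_k$ within a factor $2^M$, which is precisely what should force enough decorrelation to drive the divergent Borel–Cantelli conclusion; when $M=\infty$ this decorrelation fails, consistently with the type $2$ direction and with Example \ref{*exdyadb}.
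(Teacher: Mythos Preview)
Your treatment of the case $M=\infty$ is correct and coincides with the paper's: both invoke Theorem~\ref{ez is 2-es} with $\varepsilon=1$, reading off $a_{n_{k+1}}/a_{n_{k+1}-1}=2^{m_{k+1}-m_k}\to\infty$ along the subsequence $n=n_{k+1}$.

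For $M<\infty$ the paper gives no argument of its own; it only asserts that a straightforward modification of the proof of Theorem~1 in \cite{[BKM2]} does the job. Your sketch therefore already offers more than the paper's text. The preliminary steps you lay out---the reduction to $f=\chi_G$ via Theorem~\ref{char func}, the inclusions $\Lambda+1\subseteq\Lambda$ and $\Lambda_k+2^{-m_{k_0}}\subseteq\Lambda_k\cup\Lambda_{k+1}$ for $k\ge k_0$ giving the half-line structure of Proposition~\ref{*proptrans}, and the Fubini identity $I_c=\int_{[c,c+1)}s=\int_G W_c$ together with the two-sided comparability of the $I_c$'s when $M<\infty$---are all sound and correctly isolate the dichotomy $I_c\equiv\infty$ versus $I_c<\infty$ for all $c$.

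The gap is that you stop exactly where the substance lies. In the case $I_c\equiv\infty$ you propose to run a divergent Borel--Cantelli via a Kochen--Stone/second-moment bound on $\mu(A_j\cap A_k)$, but you neither formulate the inequality you need nor indicate how to derive it; you yourself flag this as ``the main obstacle'' and only gesture at why the nested $2^{-m_k}$-periodic structure together with $m_{k+1}-m_k\le M$ should produce enough decorrelation. That step is the entire content of this implication: for an arbitrary open $G$ the sets $A_k$ are not automatically quasi-independent, and the dyadic nesting $2^{-m_j}\mathbb{N}\subset 2^{-m_k}\mathbb{N}$ has to be exploited concretely (as in the argument behind Example~\ref{*exdyada}) to control the overlaps or, alternatively, to compare $s$ directly across translates. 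As it stands the proposal is a reasonable outline of the right shape, matching the paper on the easy direction, but it is not a proof of the hard one.
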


\begin{proof}  Assume that $M < \infty$.  In this case a straightforward modification of the proof of Theorem 1 in \cite{[BKM2]} shows that $\Lambda$ is type 1. Suppose that $M=\infty$. Now we can use Theorem \ref{ez is 2-es} with $ {\varepsilon}:=1$. We have
$$
\limsup_{k\rightarrow\infty} \frac{a_{n_k}}{a_{n_k-1}} = \limsup_{k\rightarrow\infty} 2^{m_k-m_{k-1}} = \infty,
$$
hence $\Lambda$ is type 2.
\end{proof}

\begin{theorem}{\label{sandwich}}
There exists a collection of discrete sets $\{\Lambda_n\}_{n \in  {\mathbb {Z}}}$ such that
 \begin{equation}\label{sandwichineq}
\Lambda_{n+1} \subset \Lambda_{n} \text{    for all    } n \in  {\mathbb {Z}}
\end{equation}
and $\Lambda_n$ is type 1 if $n$ is odd and type 2 if $n$ is even.
\end{theorem}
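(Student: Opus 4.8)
The plan is to realise every member of the chain as a dyadic set of the kind governed by Theorem \ref{mkthm}, so that its type can be read off from a single combinatorial quantity. Fix once and for all a block decomposition with integer endpoints $n_1<n_2<\cdots$, $n_k\to\infty$, and prescribe for each level $n\in\Z$ and each block $k$ a resolution exponent $m_k^{(n)}\in\N$, setting
\[
\Lambda_n=\bigcup_{k\ge 1}\Bigl(2^{-m_k^{(n)}}\N\cap[n_k,n_{k+1})\Bigr).
\]
Since $2^{-m}\N\subset 2^{-m'}\N$ exactly when $m\le m'$, the nesting condition \eqref{sandwichineq} is equivalent to requiring the array to be nonincreasing in the level, i.e.\ $m_k^{(n+1)}\le m_k^{(n)}$ for all $k,n$. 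If moreover $(m_k^{(n)})_k$ is strictly increasing for every fixed $n$, then Theorem \ref{mkthm} applies and tells us that $\Lambda_n$ is type $1$ exactly when the gaps $m_{k+1}^{(n)}-m_k^{(n)}$ are bounded in $k$ and type $2$ exactly when they are unbounded (for the type $2$ half one may equivalently invoke Theorem \ref{ez is 2-es}, since unbounded exponent gaps give $\limsup_k a_{n_k}/a_{n_k-1}=\limsup_k 2^{m_k^{(n)}-m_{k-1}^{(n)}}=\infty$). The theorem is thereby reduced to producing an integer array $m_k^{(n)}$ which is (a) nonincreasing in $n$, (b) strictly increasing in $k$ for each $n$, with (c) bounded $k$-gaps for odd $n$ and (d) unbounded $k$-gaps for even $n$.

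I would write $m_k^{(n)}=k+E_n(k)$, so that the floor $m_k^{(n)}\ge k$ forced by (b) corresponds to $E_n(k)\ge 0$, strict monotonicity in $k$ holds as soon as each $E_n$ is nondecreasing, nesting becomes $E_{n+1}\le E_n$, and the type is dictated by the increments of $E_n$: bounded increments give type $1$, unbounded increments give type $2$. The downward direction $n\to-\infty$ is easy, because there the sets only grow and the excesses only increase, so there is unlimited room to alternately install a family of ever taller ``spikes'' (even levels) and smooth them to bounded increments (odd levels). The genuinely constrained direction is $n\to+\infty$: the exponents must decrease while staying strictly increasing in $k$, so the $E_n$ must decrease toward $0$, and yet even levels must keep unbounded increments arbitrarily far out. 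The key observation that makes this possible is that type $2$ only requires the increments of $E_n$ to tend to infinity, \emph{not} to grow linearly; a spike of height $H_i$ inserted at block position $q_i$, with $q_i$ increasing very fast and $H_i$ increasing very slowly, produces unbounded $k$-gaps while keeping the total excess $E_n(k)=\sum_{q_i\le k}H_i$ sublinear, hence easily dominated by a coarser level below it.

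Concretely I would take, for each even $n$, the spike heights $H_i=i$ placed at positions $q_i^{(n)}=2^{\,i+t_n}$ for a threshold sequence $t_n$ that is nondecreasing in $n$ and tends to $+\infty$ (and to $-\infty$ in the downward direction). For fixed $k$ the excess $E_n(k)=\sum_{i:\,2^{i+t_n}\le k} i$ is nonincreasing in $n$, giving the nesting, while for fixed $n$ it contains jumps of height $i\to\infty$, giving unbounded $k$-gaps and hence type $2$. Each odd level $n$ is then obtained by choosing $E_n$ to be a nondecreasing function with bounded increments squeezed between the excesses of its two even neighbours, $E_{n+1}\le E_n\le E_{n-1}$; such a function exists because each even excess rises above $0$ only through slowly growing, very widely spaced spikes, so a bounded-increment profile has ample room to anticipate each spike of the lower neighbour (the rise needed is $H_i$ spread over a spacing of order $2^{\,i+t}$, so the required slope tends to $0$) while remaining under the upper neighbour. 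Substituting back through $m_k^{(n)}=k+E_n(k)$ and applying Theorem \ref{mkthm} to the odd levels and its type $2$ half to the even levels yields simultaneously the nesting \eqref{sandwichineq} and the prescribed alternation of types.

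The step I expect to be the main obstacle is precisely the upward direction: ensuring that type $2$ recurs for arbitrarily large $n$ even though the exponents are pressed down toward the floor $m_k=k$. This is resolved by the two points above --- unbounded gaps require only sublinear excess, and a nonincreasing family of nonnegative profiles each having unbounded increments does exist once the spikes are allowed to migrate to infinity with the level --- together with the routine but slightly delicate bookkeeping needed to keep the interleaved odd profiles nondecreasing with bounded increments while honouring both nesting inequalities $E_{n+1}\le E_n\le E_{n-1}$.
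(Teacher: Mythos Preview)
Your plan is correct and shares the paper's core strategy: realise every $\Lambda_n$ as a dyadic set of the Theorem~\ref{mkthm} form, so that nesting becomes monotonicity of the exponent array in $n$ and the type is read off from the exponent gaps. The difference lies only in how the exponent array is produced.

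The paper bypasses your interpolation step entirely by writing both families in closed form over unit blocks $[\nu,\nu+1)$. For the odd level $2j-1$ the exponent on block $\nu$ is $l(\nu,j)=\lfloor \nu\cdot 2^{-j}\rfloor$; for the even level $2j$ it is $m(\nu,j)=\max\{2^i:2^i\le \nu\cdot 2^{-j}\}$, the largest power of two not exceeding $\nu\cdot 2^{-j}$. The sandwich inequality $l(\nu,j{+}1)\le m(\nu,j)\le l(\nu,j)$ is then a one-line check, $\sup_\nu\big(l(\nu{+}1,j)-l(\nu,j)\big)=\lceil 2^{-j}\rceil<\infty$ gives type~1 at odd levels, and $\sup_\nu\big(m(\nu{+}1,j)-m(\nu,j)\big)=\infty$ (the jump at $\nu=2^{i+j}$ has size $2^{i-1}$) gives type~2 at even levels. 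In your language the paper's even exponent profile is precisely a ``spikes at powers of two'' construction shifted by $j$, and the odd profiles $l(\cdot,j)$ are the canonical bounded-increment functions that automatically sit between two consecutive even ones---so no separate existence argument for the interpolating $E_n$ is needed, and none of the bookkeeping about anticipating spikes while staying under the upper neighbour ever arises. Your route makes the conceptual tension in the upward direction explicit (exponents pressed toward the floor $m_k=k$, yet unbounded gaps must recur, resolved by sublinear excess); the paper's route trades that narrative for brevity, the whole proof being three displayed inequalities and an appeal to Theorem~\ref{mkthm}.
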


\begin{proof} For each $k \in  {\mathbb {Z}}$ define $\Gamma_k=2^{-k} {\mathbb {N}}$ and for each $j\in  {\mathbb {Z}}$ and $\nu \in  {\mathbb {N}}$ define
$l(\nu,j)= \lfloor \nu\cdot 2^{-j} \rfloor$ and $m(\nu,j)=\max\{2^i\colon\, 2^i\le \nu\cdot 2^{-j}\}$.  Note that for every $j \in  {\mathbb {Z}}$ we have
 \begin{equation}\label{lm ineq}
l(\nu,j+1)\le m(\nu,j) \le l(\nu,j),
\end{equation}
 \begin{equation}\label{finite}
\sup_{\nu \in  {\mathbb {N}}}(l(\nu+1,j)-l(\nu,j))= \lceil 2^{-j}  \rceil < \infty,
\end{equation}
and
 \begin{equation}\label{infinite}
\sup_{\nu \in  {\mathbb {N}}}(m(\nu+1,j)-m(\nu,j))= \infty.
\end{equation}

  For each $j \in  {\mathbb {Z}}$ we define
$\Lambda_{2j}=\cup_{\nu=1}^\infty (\Gamma_{m(\nu,j)}\cap [\nu,\nu+1))$ and we define $\Lambda_{2j-1}=\cup_{\nu=1}^\infty (\Gamma_{l(\nu,j)}\cap [\nu,\nu+1))$.  Then \eqref{sandwichineq} follows directly from \eqref{lm ineq}.
 By Theorem \ref{mkthm}, \eqref{finite} and \eqref{infinite} we see that for every $j \in  {\mathbb {Z}}$ we have that $\Lambda_{2j-1}$ is type $1$ and $\Lambda_{2j}$ is type $2$. \end{proof}


\begin{theorem}\label{alg indep}
Suppose that $\Lambda=\{\alpha_1,\alpha_2,\dots\}$, where $\alpha_n \to \infty$ and the $\alpha_i$s are algebraically independent.  Then $\Lambda$ is type 2.
\end{theorem}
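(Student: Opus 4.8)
The plan is to split into two cases according to whether $\Lambda$ is asymptotically dense or asymptotically lacunary in the sense of Definition \ref{asydens}; every such $\Lambda$ falls into exactly one of these classes. If $\Lambda$ is asymptotically lacunary, then there is nothing to do: Theorem \ref{*BKMth4} already guarantees that an asymptotically lacunary set is type $2$, and it produces intervals on which the convergence and divergence sets are controlled. So the only case that genuinely requires work is when $\Lambda$ is asymptotically dense, and there I would invoke the independence criterion of Theorem \ref{*BKMth5}.

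First I would fix the three intervals demanded by Theorem \ref{*BKMth5}. Taking $I=[0,1]$, $K=[3,4]$, and $J=K+I-I=[2,5]$ works: $I$ lies to the left of $J$, and $\mathrm{dist}(I,J)=1=|I|$, so the hypotheses $J=K+I-I$, $I$ to the left of $J$, and $\mathrm{dist}(I,J)\ge|I|$ all hold simultaneously. The decisive observation is that the algebraic-independence assumption makes the group-independence requirement of Theorem \ref{*BKMth5} automatic. Indeed, since $I$ and $J$ are disjoint with $I$ to the left of $J$, for every $y$ the reflected intervals $y-I=[y-1,y]$ and $y-J=[y-5,y-2]$ are disjoint, so $\Lambda\cap(y-I)$ and $\Lambda\cap(y-J)$ are disjoint subsets of $\Lambda$. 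If some element lay in both additive groups generated by these two subsets, subtracting the two representations would yield an integer relation among finitely many distinct $\alpha_i$; by the independence of the $\alpha_i$ all coefficients must vanish, forcing the common element to be $0$. Hence the two groups meet only in $0$, which is precisely the required independence.

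It then remains to produce the sequences $(y_j)$ and $(N_j)$ tending to infinity. Since $\Lambda$ is asymptotically dense, the equivalent formulation in Definition \ref{asydens} gives that the count $\#(\Lambda\cap(y-I))$ of points of $\Lambda$ in the fixed-length window $y-I$ tends to infinity as $y\to\infty$. I would therefore choose $y_j\to\infty$ and set $N_j=\#(\Lambda\cap(y_j-I))$, so that $N_j\to\infty$ and $y_j-I$ contains a set of $N_j$ points of $\Lambda$ independent from $\Lambda\cap(y_j-J)$ by the preceding paragraph. Theorem \ref{*BKMth5} then yields that $\Lambda$ is type $2$, finishing the dense case and hence the theorem.

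I do not anticipate a serious obstacle here, since this is essentially the argument sketched in Remark \ref{*rembkm5}, now made precise. The two points that require the most care are the correct placement of the three intervals $I,J,K$ (so that all three geometric constraints hold at once) and the observation that global $\mathbb{Z}$-linear independence of $\Lambda$ trivializes the pairwise group-independence hypothesis of Theorem \ref{*BKMth5}. The one subtlety worth flagging is that the window length $|I|$ may be kept equal to a fixed positive constant while still forcing $N_j\to\infty$; this is legitimate exactly because asymptotic density supplies infinitely many points of $\Lambda$ in every fixed-length window as the window is pushed to $+\infty$.
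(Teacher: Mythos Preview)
Your proposal is correct and follows exactly the approach of the paper: split into the lacunary case (handled by Theorem~\ref{*BKMth4}) and the asymptotically dense case (handled via Theorem~\ref{*BKMth5}, as indicated in Remark~\ref{*rembkm5}). You have simply made the latter step explicit by exhibiting concrete $I,J,K$ and verifying the group-independence and counting hypotheses, which the paper leaves to the reader.
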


\begin{proof}  If $\Lambda$ is lacunary this follows from Theorem \ref{*BKMth4}.  Otherwise, we can use Remark \ref{*rembkm5}.
\end{proof}

\begin{theorem}\label{slow always 2}
If $\Lambda$ is a discrete infinite set of algebraically independent numbers and
\begin{equation}\label{speed}
\limsup_{n\rightarrow\infty} \frac{\#(\Lambda\cap[0,n))}{n} = \infty
\end{equation}
then every set containing $\Lambda$ is type 2.
\end{theorem}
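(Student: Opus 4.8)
The plan is to fix an arbitrary superset $\Lambda'\supseteq\Lambda$ and to manufacture a type~$2$ witness for $\Lambda'$ directly through Theorem \ref{*BKMth5}. The first thing to notice is that the strategy of Corollary \ref{suru 2-es} is unavailable: there the hypothesis \eqref{fast} was inherited by every superset, whereas algebraic independence is destroyed by adding points. Consequently \eqref{speed} must do all the work through the points of the original set $\Lambda$, and the heights/locations of the witness will depend on $\Lambda'$ as well as on $\Lambda$.

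First I would convert \eqref{speed} into clusters. Choosing $n_j\to\infty$ with $\#(\Lambda\cap[0,n_j))\ge K_j n_j$ and $K_j\to\infty$, a pigeonhole over the unit subintervals of $[0,n_j)$ yields, for each $j$, a window of length $1$ containing at least $K_j$ points of $\Lambda$. I fix once and for all three short intervals $I$ (to the left), $J$, $K$ with $J=K+I-I$ and $\mathrm{dist}(I,J)\ge|I|$, and choose centres $y_j$ so that $y_j-I$ captures the $j$-th cluster; write $\tilde N_j\to\infty$ for its cardinality. For $\Lambda$ itself everything is automatic: the windows $y_j-I$ and $y_j-J$ are disjoint, so the cluster and $\Lambda\cap(y_j-J)$ are disjoint subsets of the $\mathbb{Q}$-independent set $\Lambda$, Theorem \ref{*BKMth5} applies (much as in Remark \ref{*rembkm5}), the cluster forces divergence on $I$, and $J=K+I-I$ ensures that for $x\in K$ only the points of $y_j-J$ can contribute while independence keeps their contribution summable, so $C$ is large on $K$.

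The substitute for the missing monotonicity is a linear-algebra extraction. For $\Lambda'$ the set $Z=\Lambda'\cap(y_j-J)$ may contain foreign points, so the cluster $P=\{p_{j,i}\}$ need no longer be independent from $Z$. However, the points of $\Lambda$ in $y_j-J$ are independent from $P$, so if $V=\mathrm{span}_{\mathbb Q}P$, $W=\mathrm{span}_{\mathbb Q}(\Lambda\cap(y_j-J))$ and $U=\mathrm{span}_{\mathbb Q}\big((\Lambda'\setminus\Lambda)\cap(y_j-J)\big)$, then $V\cap W=\{0\}$, and projecting modulo $W$ shows $\dim_{\mathbb Q}\big(V\cap\mathrm{span}_{\mathbb Q}Z\big)\le\dim_{\mathbb Q}U=:e_j$. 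Hence at most $e_j$ of the cluster points need to be discarded to retain a subset $S$ with $\langle S\rangle\cap\langle Z\rangle=\{0\}$ and $\#S\ge\tilde N_j-e_j$. If this count tends to infinity along a subsequence, Theorem \ref{*BKMth5} applies to $\Lambda'$ with the sets $S$ playing the role of the independent points, and $\Lambda'$ is type~$2$.

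The main obstacle is therefore to guarantee $\tilde N_j-e_j\to\infty$ against an adversarial superset that tries to keep the added $\mathbb{Q}$-rank $e_j$ comparable to $\tilde N_j$ in the windows $y_j-J$, and here the hypotheses must interact. If the foreign points drawn into the windows have uniformly bounded rank (for instance if they come from a fixed finite-dimensional $\mathbb{Q}$-space, such as the rationals), then $e_j=O(1)$ and extraction succeeds at once. If instead they contribute unboundedly many independent directions, I would exploit the separation of the windows together with $J=K+I-I$: a point of $y_j-J$ has value comparable to $y_j$, which constrains how a combination of the cluster points can land in that window and hence how the added points can be linearly entangled with the cluster; when entanglement is impossible, $\Lambda'$ is seen to carry its own independent, sufficiently dense subsystem and one falls back on Theorem \ref{alg indep}, while the abundance of clusters furnished by \eqref{speed} allows relocating $y_j$ to avoid bad windows. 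Making this dichotomy quantitative, so that the retained count genuinely diverges for \emph{every} superset, is the step I expect to demand the most care.
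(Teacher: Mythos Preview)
Your extraction lemma is correct, but the closing paragraph is not a proof, and the gap is real: you cannot ensure $\tilde N_j-e_j\to\infty$ for every superset, no matter how the windows are relocated. A concrete adversary is $\Lambda'=\Lambda\cup(\Lambda-d)$ with $d\in(2,4)$ transcendental and $\mathbb{Q}$-independent from $\Lambda$. For \emph{any} unit window $[y,y+1)$ the $\Lambda$-cluster $P=\Lambda\cap[y,y+1)$ has its translate $P-d$ sitting inside $(y-4,y-1)=y-J$, and the differences $(p-d)-(p'-d)=p-p'$ fill a $(\#P-1)$-dimensional subspace of $V\cap\langle Z\rangle$; your extraction therefore retains at most one point of $P$, for every placement of the window. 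The fallback to Theorem \ref{alg indep} does not help either, since $\Lambda'$ is not itself algebraically independent and that theorem says nothing about supersets.

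The paper resolves this by shifting where the clusters are taken from. Define
\[
\Lambda^{*}=\bigl\{\lambda'\in\Lambda':\ \lambda'\notin\mathrm{span}_{\mathbb Q}\bigl(\Lambda'\cap(-\infty,\lambda')\bigr)\bigr\},
\]
the greedy $\mathbb{Q}$-basis of $\Lambda'$ read from left to right. Then $\Lambda^{*}$ is $\mathbb{Q}$-independent and $\mathrm{span}_{\mathbb Q}(\Lambda'\cap(-\infty,n))=\mathrm{span}_{\mathbb Q}(\Lambda^{*}\cap(-\infty,n))$, so
\[
\#(\Lambda^{*}\cap(-\infty,n))=\dim_{\mathbb Q}\mathrm{span}(\Lambda'\cap(-\infty,n))\ge\dim_{\mathbb Q}\mathrm{span}(\Lambda\cap(-\infty,n))=\#(\Lambda\cap(-\infty,n)),
\]
the last equality by the algebraic independence of $\Lambda$. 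Thus \eqref{speed} transfers to $\Lambda^{*}$, and pigeonholing on $\Lambda^{*}$ (not on $\Lambda$) gives windows $[y_j,y_j+1)$ with $N_j\to\infty$ points of $\Lambda^{*}$. By the very definition of $\Lambda^{*}$ these points are independent from $\Lambda'\cap(-\infty,y_j)$, hence in particular from $\Lambda'\cap(y_j-J)$, and Theorem \ref{*BKMth5} applies to $\Lambda'$ without any further extraction. The one idea you were missing is precisely this rank-monotonicity trick: rather than pruning the $\Lambda$-clusters after the fact, let the superset choose its own independent clusters.
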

\begin{proof}
Suppose that $\Lambda\subset \Lambda'$. We will prove that $\Lambda'$ satisfies the conditions of Theorem \ref{*BKMth5}, which implies that it is type 2. Define
$$
\Lambda^* = \left\{\lambda'\in\Lambda' : \text{  $\lambda'$ is independent from $\Lambda'\cap(-\infty,\lambda')$  } \right\}.
$$
It is easy to see that \eqref{speed} is true for $\Lambda^*$ as well. Hence there is a sequence $(y_j)$ in $ {\ensuremath {\mathbb N}}$ tending to infinity such that $N_j := j \le \#(\Lambda^*\cap[y_j,y_j+1))$. Let $I=(-1,0]$, $K=[2,3]$ and $J=K+I-I=(1,4)$.
 By the definition of $\Lambda^*$ we have that $(\Lambda^*\cap(y_j-I)) = (\Lambda^*\cap[y_j,y_j+1)) {\subset}  {\Lambda}'$ is independent from $\Lambda'\cap (y_j-4,y_j-1) = \Lambda'\cap(y_j-J)$, thus $\Lambda'$, $I$, $J$, $K$, $(y_j)$ and $(N_j)$ indeed satisfy the conditions of Theorem \ref{*BKMth5}.
\end{proof}

 One may wonder if it is always possible to construct
a chain appearing in Therem \ref{sandwich}
such that $\Lambda_{0}$ is an arbitrary type $2$ set.
Combining the previous two theorems we obtain a negative answer:

\begin{theorem}\label{always type 2} Assume that $\Lambda$
satisfies
\eqref{speed}
 and $\Lambda$ consists of algebraically independent numbers.
 In this case for any $\Lambda'$ satisfying $\Lambda'\subseteq\Lambda$ or $\Lambda\subseteq\Lambda'$ we have that $\Lambda'$ is type 2. \end{theorem}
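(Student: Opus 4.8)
The plan is to dispose of the two cases separately, each by a direct appeal to one of the two immediately preceding theorems; no new construction is required. Throughout I would keep in mind the standing convention that any set playing the role of $\Lambda'$ is countably infinite, bounded below, and free of finite accumulation points, so that its elements can be listed in increasing order tending to $+\infty$.

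For the superset direction $\Lambda\subseteq\Lambda'$ I would simply invoke Theorem \ref{slow always 2}. Its hypotheses are placed entirely on $\Lambda$---that $\Lambda$ be a discrete infinite set of algebraically independent numbers satisfying \eqref{speed}---and these are exactly what is assumed here; its conclusion is that every set containing $\Lambda$ is type 2, which covers $\Lambda'$.

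For the subset direction $\Lambda'\subseteq\Lambda$ I would first note that algebraic independence, in the sense used in the paper (no nontrivial integer linear relation), passes to subsets, since any relation among finitely many elements of $\Lambda'$ is a fortiori a relation among elements of $\Lambda$. As $\Lambda'$ is infinite and inherits the absence of finite accumulation points from $\Lambda$, it is a discrete infinite set whose elements tend to $+\infty$. Thus $\Lambda'$ meets the hypotheses of Theorem \ref{alg indep}, which yields that $\Lambda'$ is type 2.

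I do not expect a genuine obstacle: the mathematical content lives in Theorems \ref{slow always 2} and \ref{alg indep}, and the present statement is their bookkeeping combination. The one point I would flag is the asymmetry between the two cases---the growth condition \eqref{speed} is needed only for the superset direction, while the subset direction follows from algebraic independence alone.
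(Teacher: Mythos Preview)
Your proposal is correct and follows essentially the same approach as the paper: the subset case is handled by Theorem \ref{alg indep} (after noting that algebraic independence passes to subsets), and the superset case by Theorem \ref{slow always 2}. The paper's proof is even terser but uses exactly these two ingredients in the same way.
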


\begin{proof} The claim about sets contained by $\Lambda$ is obvious from Theorem \ref{alg indep}.
 As $\Lambda$ satisfies \eqref{speed}, we obtain from Theorem
\ref{slow always 2} that every $\Lambda'$ containing $\Lambda$ is type 2.
\end{proof}

\section{Independent elements}\label{independent}



\begin{theorem}\label{type1plusalg}
Let $\{m_k\}_{k=1}^N$ and $\{n_k\}_{k=1}^N$ be strictly increasing sequences of positive integers, where either $N \in  {\mathbb {N}}$,
 or $N=\infty$.  If $N \in  {\mathbb {N}}$ we define $n_{N+1}=\infty$. Define
$$\Lambda_1=\cup_{k=1}^N (2^{-m_k} {\mathbb {N}} \cap [n_k,n_{k+1})).$$
Let $\Lambda_2=\{\alpha_k:\ k\in {\mathbb {N}}\}$ be an algebraically independent set of irrational numbers, where $\alpha_k \nearrow \infty$.
Then $\Lambda_*=\Lambda_1 \cup \Lambda_2$ is type 2.
\end{theorem}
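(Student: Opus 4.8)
The plan is to obtain type $2$ directly from Theorem \ref{*BKMth5}, following the template of the proof of Theorem \ref{slow always 2}. I would keep the same intervals there, namely $I=(-1,0]$, $K=[2,3]$ and $J=K+I-I=(1,4)$, which satisfy $J=K+I-I$, with $I$ to the left of $J$ and $\mathrm{dist}(I,J)=1\ge |I|$. With these fixed, the entire problem reduces to producing sequences $(y_j)$ and $(N_j)\to\infty$ such that $y_j-I$ contains $N_j$ points of $\Lambda_*$ whose additive group meets $\langle \Lambda_*\cap(y_j-J)\rangle$ only in $0$.

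The independence requirement is the cheap part, and this is where I would spend the algebraic hypothesis. Every point of $\Lambda_1$ is a dyadic rational, so for each bounded interval $B$ the group $\langle \Lambda_1\cap B\rangle$ lies in $2^{-m}\Z\subset\Q$ for some $m$. Since the $\alpha_k$ are algebraically independent, every nontrivial integer combination of them is irrational, so $\langle\{\alpha_k\}\rangle$ meets $\Q$, and more generally $\langle \Q\cup(\text{any other }\alpha\text{'s})\rangle$, only in $0$. Hence any subfamily of the $\alpha_k$ lying in $y_j-I$ is automatically independent, in the sense of Theorem \ref{*BKMth5}, from $\Lambda_*\cap(y_j-J)$, irrespective of which dyadic points or further $\alpha$'s populate that window. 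Thus the admissible ``independent points'' in a window are exactly the $\alpha_k$ it contains, and the whole question becomes one of density: can one place a fixed-length window $y_j-I$ so as to trap $N_j\to\infty$ of the $\alpha_k$?

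When $\Lambda_2$ is asymptotically dense, or merely satisfies $\limsup_{y}\#\bigl(\Lambda_2\cap[y,y+1)\bigr)=\infty$, this is immediate and Theorem \ref{*BKMth5} finishes the argument. To absorb the sparser possibilities I would isolate two routine regimes. If $\Lambda_*$ is asymptotically lacunary, Theorem \ref{*BKMth4} already yields type $2$. If $N=\infty$ and $M=\sup_k(m_{k+1}-m_k)=\infty$, then the dyadic jumps dominate any bounded contribution from $\Lambda_2$, so that $a_{n_k}/a_{n_k-1}\ge 2^{\,m_k-m_{k-1}}$ still forces \eqref{gyors}, and Theorem \ref{ez is 2-es} applies. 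The only case left uncovered is $N=\infty$ with $M<\infty$ — so that $\Lambda_1$ is itself type $1$ by Theorem \ref{mkthm} — together with a $\Lambda_2$ whose window counts stay bounded; here no fixed window ever holds many $\alpha$'s and Theorem \ref{*BKMth5} is unavailable.

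The hard part will be precisely this residual regime. Since $\Lambda_1$ is type $1$, the sum $\sum_{\lambda\in\Lambda_1}f(x+\lambda)$ obeys a zero-one law, so $\Lambda_*$ is type $2$ exactly when some witness $f$ for the (lacunary, hence type $2$) set $\Lambda_2$ can additionally be arranged so that $\sum_{\lambda\in\Lambda_1}f(x+\lambda)<\infty$ a.e. Concentrating $f$ in small bumps near the $\alpha_k$, divergence of $\sum_k f(x+\alpha_k)$ on a positive-measure set demands total mass $\sum_k h_k|A_k|=\infty$, where $A_k$ is the set of $x$ on which the $\alpha_k$-translate meets the $k$-th bump; but the very same bumps sit where $\Lambda_1$ has local density $2^{m_k}\to\infty$, so their $s_1$-footprints $S_k$ satisfy $|S_k|=2^{m_k}|A_k|$ and hence $\sum_k h_k|S_k|=\infty$ as well. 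As the $S_k$ are essentially independent dyadic-periodic sets, this tends to force $s_1=\infty$ a.e. The crux is to break this tension: one must exploit the irrational offsets $\alpha_k \bmod 2^{-m_k}$ to keep the $S_k$ so strongly correlated that their union of full-mass overlaps is null, while the $A_k$ persist in covering a positive-measure divergence set. I expect the construction of such an $f$, balanced against the rapidly growing density of $\Lambda_1$, to be the genuine obstacle in the proof.
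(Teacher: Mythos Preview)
Your case decomposition is sound, and the easy regimes are handled correctly: lacunary $\Lambda_*$ via Theorem \ref{*BKMth4}, $N=\infty$ with $M=\infty$ via Theorem \ref{ez is 2-es}, and $\Lambda_2$ with unbounded unit-window counts via Theorem \ref{*BKMth5}. But the proposal is not a proof. You explicitly leave the residual regime --- $N=\infty$, $M<\infty$, and $\sup_y\#(\Lambda_2\cap[y,y+1))<\infty$ --- unresolved, and this regime is non-empty (take $\Lambda_1$ from Example \ref{*exdyada} together with any $\Lambda_2$ satisfying $\alpha_{k+1}-\alpha_k>1$). Your final paragraph diagnoses the tension accurately but supplies no construction; the ``genuine obstacle'' you anticipate is the entire content of the theorem in this case.

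The paper does not split into cases at all; it builds a single witness $f=\mathbf{1}_S$ via Kronecker's theorem (Theorem \ref{*kron}), and this is the missing idea. For each $k$ one selects a well-spaced block $B_k=\{\beta_{2^k+1},\dots,\beta_{2^{k+1}}\}\subset\Lambda_2$ and chooses $r_k$ so that $\Lambda_1\cap(-\infty,\beta_{2^{k+1}}+1]\subset 2^{-r_k}\Z$. Kronecker yields $p_k\in\N$ such that, with $t_k=p_k2^{r_k}$ and $\mathbf{n}=2^k$, one has $\|t_k\beta_{2^k+i}+i/\mathbf{n}\|\le 1/(10\mathbf{n})$ for $i\le\mathbf{n}$ and $\|t_k\alpha\|\le 1/(10\mathbf{n})$ for all earlier $\alpha\in\Lambda_2$. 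Near each $\beta_{2^k+i}$ one places a $1/t_k$-periodic grid of intervals of width $2/(\mathbf{n}t_k)$. The first condition forces every $x\in[\tfrac14,\tfrac34]$ to have $x+\beta_{2^k+i}\in S$ for some $i$, giving divergence there. The second makes earlier $\alpha$'s translate the grid nearly onto itself, so on $[3,4]$ the $\Lambda_2$-contribution at stage $k$ has measure $O(2^{-k})$ and Borel--Cantelli gives a.e.\ convergence. The key point for $\Lambda_1$ --- exactly the correlation mechanism you were groping for --- is the divisibility $2^{r_k}\mid t_k$: every relevant $\lambda\in\Lambda_1$ translates the $1/t_k$-periodic grid \emph{exactly} onto itself, so the $\Lambda_1$-sum on $[3,4]$ obeys the same $O(2^{-k})$ bound. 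The device is thus not to exploit $\alpha_k\bmod 2^{-m}$, but to manufacture a period that is simultaneously a dyadic refinement (absorbing $\Lambda_1$) and Kronecker-tuned to place the current $\alpha$'s while neutralising the previous ones.
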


Observe that $ {\Lambda}_{1}$ can be any of the sets from Examples \ref{*exdyada}
or \ref{*exdyadb}, hence $ {\Lambda}_{1}$ can be a type $1$ set which is converted in this case
into a type $2$ set after we add the independent numbers.

\begin{proof}
 We assume that $0<\alpha_1<\alpha_2<\dots$ and choose a subsequence $\{\alpha_{\nu_k}\}:=\{\beta_k\}$ such that
 \begin{equation}\label{spacing}
\beta_{k+1}-\beta_k>5 \quad \forall k \in  {\mathbb {N}}.
\end{equation}
For each $k \in  {\mathbb {N}}$ we define$$\text{    
$B_k=\{\beta_{2^k+1},\beta_{2^k+2},\dots,\beta_{2^{k+1}}\}$ and
$A_k=\{\alpha_1,\alpha_2,\dots,\alpha_{\nu_{2^{k+1}}}=\beta_{2^{k+1}}\}$.}$$
 We also define
$$r_k=\sup\{m_l: \ n_l \le \beta_{2^{k+1}}+1\},$$
and let $A_k^*=\{2^{r_k}\alpha:\ \alpha \in A_k \}$.

For each $i=1,2,\dots, 2^k$ we define
$$\alpha_{i,k}=\beta_{2^k+i}=\alpha_{\nu_{2^k+i}}.$$

For each $k \in  {\mathbb {N}}$ we let $\mathbf{n}=\mathbf{n}(k)=2^k$.   Note that $A^*_k$ is a finite set of algebraically independent numbers and therefore using Kronecker's Theorem (Theorem \ref{*kron}) we may choose
$p_k\in  {\mathbb {N}}$ such that

 \begin{equation}\label{alg i/n2}
\Big |\Big |p_k 2^{r_k} \alpha_{i,k}+\frac{i}{\mathbf{n}}\Big |\Big |\le \frac1{10\mathbf{n}},\quad \ \ i=1,2,\dots,\mathbf{n}
\end{equation}
and
 \begin{equation}\label{alg 02}
\Big |\Big |p_k 2^{r_k} \alpha_j\Big |\Big |\le \frac1{10\mathbf{n}} \text{     for    }\alpha_j \in A_k \backslash B_k.
\end{equation}
%
%

We also define $t_k=p_k 2^{r_k}$ and for each $i \in \{1,2,\dots,2^k\}$ and $k \in  {\mathbb {N}}$ we define
$$ S_{i,k}=\Big (\bigcup_{j=1}^\infty \Big [\frac{j}{t_k}-\frac1{\mathbf{n}t_k},\frac{j}{t_k}+\frac1{\mathbf{n}t_k}
\Big ]\Big )\cap [\alpha_{i,k},\alpha_{i,k}+1]$$
and then define
$$ S =\cup_{k=1}^\infty \cup_{i=1}^{2^k} S_{i,k}=\cup_{k=1}^\infty S_k$$
and $f=\mathbf{1}_S$.
Note that by \eqref{spacing} we have
 \begin{equation}\label{sk spacing}
\text{dist}(S_{i,k},S_{i',k'})\ge 4 \quad\text{     if    } (i,k) \neq (i',k').
\end{equation}

\bigskip

\begin{claim}\label{1434}
$[\frac14,\frac34]\subset D(f,\Lambda_{*})$.
\end{claim}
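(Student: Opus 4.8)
The plan is to show that every $x\in[\tfrac14,\tfrac34]$ lies in $D(f,\Lambda_*)$ by exhibiting, for each $k\in\mathbb N$, a single element of $\Lambda_2\subset\Lambda_*$ whose translate by $x$ falls into $S$, and then observing that these elements are pairwise distinct so that infinitely many terms of the defining series equal $1$. Concretely, for fixed $x$ and fixed $k$ I will choose an index $i=i(k)\in\{1,\dots,2^k\}$ and use the translator $\lambda=\alpha_{i,k}=\beta_{2^k+i}\in\Lambda_2$. Since membership $x+\alpha_{i,k}\in S_{i,k}$ amounts to the two requirements that $x+\alpha_{i,k}\in[\alpha_{i,k},\alpha_{i,k}+1]$ and that $x+\alpha_{i,k}$ lie within $\tfrac1{\mathbf n t_k}$ of some grid point $j/t_k$, i.e. $\|t_k(x+\alpha_{i,k})\|\le\tfrac1{\mathbf n}$, the whole argument reduces to arranging these two conditions.

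The first requirement is immediate: because $x\in[\tfrac14,\tfrac34]\subset[0,1]$ we have $x+\alpha_{i,k}\in[\alpha_{i,k}+\tfrac14,\alpha_{i,k}+\tfrac34]$, which sits comfortably inside $[\alpha_{i,k},\alpha_{i,k}+1]$; this built-in margin is exactly why the interval $[\tfrac14,\tfrac34]$ rather than all of $[0,1]$ is used, as it keeps $x+\alpha_{i,k}$ far from the endpoints where the grid intervals defining $S_{i,k}$ may be truncated. For the second requirement I expand $t_k(x+\alpha_{i,k})=t_kx+p_k2^{r_k}\alpha_{i,k}$ and invoke the Kronecker estimate \eqref{alg i/n2}, which says $p_k2^{r_k}\alpha_{i,k}\equiv-\tfrac{i}{\mathbf n}\pmod 1$ up to an error of at most $\tfrac1{10\mathbf n}$. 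Hence $\|t_k(x+\alpha_{i,k})\|\le\|t_kx-\tfrac{i}{\mathbf n}\|+\tfrac1{10\mathbf n}$.

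The key point is now a counting observation: as $i$ runs over $\{1,\dots,\mathbf n\}$ the residues $\tfrac{i}{\mathbf n}$ form a full $\tfrac1{\mathbf n}$-net of the circle $\mathbb R/\mathbb Z$, so whatever the value of $t_kx$ may be, there is some $i=i(k)$ with $\|t_kx-\tfrac{i}{\mathbf n}\|\le\tfrac1{2\mathbf n}$. For that choice, since $\mathbf n=2^k\ge2$, we get $\|t_k(x+\alpha_{i,k})\|\le\tfrac1{2\mathbf n}+\tfrac1{10\mathbf n}<\tfrac1{\mathbf n}$, so $x+\alpha_{i(k),k}\in S_{i(k),k}\subset S$ and $f(x+\alpha_{i(k),k})=1$. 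This is the heart of the matter, and it is precisely what the simultaneous Kronecker approximation \eqref{alg i/n2} was engineered to supply: the residues $-i/\mathbf n$ sweep out the whole circle as $i$ varies, guaranteeing a hit regardless of $t_kx$.

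Finally I would verify distinctness of the chosen translators: $\alpha_{i(k),k}=\beta_{2^k+i(k)}$ has index $2^k+i(k)$ lying in the block $\{2^k+1,\dots,2^{k+1}\}$, and these blocks are disjoint over $k$, so the $\alpha_{i(k),k}$ are pairwise distinct elements of $\Lambda_*$. Therefore $s(x)=\sum_{\lambda\in\Lambda_*}f(x+\lambda)\ge\sum_{k=1}^\infty f(x+\alpha_{i(k),k})=\sum_{k=1}^\infty 1=\infty$, giving $x\in D(f,\Lambda_*)$. I do not anticipate a genuine obstacle here: once one sees that a single well-chosen algebraically independent translate per scale $k$ suffices, the only thing to get right is the net-and-tolerance bookkeeping and the endpoint margin; note in particular that the estimates \eqref{alg 02} and the separation \eqref{sk spacing} play no role in the divergence, being reserved instead for the complementary convergence statement.
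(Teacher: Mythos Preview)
Your proof is correct and follows essentially the same approach as the paper's: both choose, for each $k$, an index $l_k\in\{1,\dots,2^k\}$ so that $t_kx$ lies within $\tfrac{1}{2\mathbf n}$ of $\tfrac{l_k}{\mathbf n}$ modulo $1$, then combine this with the Kronecker estimate \eqref{alg i/n2} to land $x+\alpha_{l_k,k}$ in $S_{l_k,k}$. The only cosmetic difference is that you phrase the computation directly via the distance-to-nearest-integer function $\|\cdot\|$, whereas the paper unpacks explicit integers $i,i'$ in \eqref{approx x}--\eqref{x+alpha}; you also make the pairwise distinctness of the $\alpha_{i(k),k}$ explicit, which the paper uses but does not spell out.
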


\begin{proof}[Proof of Claim \ref{1434}]
 Let $x \in [\frac14,\frac34]$ and $k \in  {\mathbb {N}}$ and recall that $\mathbf{n}=2^k$. Choose $i\in  {\mathbb {N}}$ and $l_k\in \{1,2,\dots,2^k\}$ such that
 \begin{equation}\label{approx x}
\Big |x-\frac{i}{t_k}-\frac{l_k}{\mathbf{n}t_k}\Big |\le \frac1{2\mathbf{n}{t_k}}.
\end{equation}
We will show that $x+\alpha_{l_k,k}\in S_{l_k,k}\subset S$.  Note that by \eqref{alg i/n2} we have
 \begin{equation}\label{alpha close}
\Big |\Big |t_k \alpha_{l_k,k}+\frac{l_k}{\mathbf{n}}\Big |\Big |\le \frac1{10\mathbf{n}}.
\end{equation}
Thus, we can choose $i'\in  {\mathbb {Z}}$ such that
 \begin{equation}\label{alpha closer}
\Big |t_k\alpha_{l_k,k}-i'+\frac{l_k}{\mathbf{n}}\Big |\le \frac1{10\mathbf{n}}
\end{equation}
and therefore
 \begin{equation}\label{alpha closest}
\Big |\alpha_{l_k,k}-\frac{i'}{t_k}+\frac{l_k}{\mathbf{n}t_k}\Big |\le \frac1{10\mathbf{n}t_k}.
\end{equation}

It follows from \eqref{approx x} and \eqref{alpha closest} that
 \begin{equation}\label{x+alpha}
\Big |x+\alpha_{l_k,k}-\frac{i+i'}{t_k}\Big |<\frac1{\mathbf{n}t_k}.
\end{equation}

Since $x \in [\frac14,\frac34]$, it follows that $x+\alpha_{l_k,k}\in [\alpha_{l_k,k},\alpha_{l_k,k}+1]$ and therefore $x+\alpha_{l_k,k}\in S_{l_k,k}\subset S$. Thus,
$$\sum_{i=1}^\infty f(x+\alpha_i) \ge \sum_{k=1}^\infty f(x+\alpha_{l_k,k})=\infty.$$
Since $x$ was chosen arbitrarily from $[\frac14,\frac34]$, it follows that $[\frac14,\frac34]\subset D(f,\Lambda_2)\subset D(f,\Lambda_*)$, as claimed.
\end{proof}
\bigskip

\begin{claim}\label{34}
 $\mu(C(f,\Lambda_*)\cap [3,4])=1$.
 \end{claim}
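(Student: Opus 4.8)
The plan is to prove that for almost every $x\in[3,4]$ only finitely many $\lambda\in\Lambda_*$ satisfy $x+\lambda\in S$, so that $s(x)=\sum_{\lambda}\mathbf{1}_S(x+\lambda)<\infty$. Since each $S_{i,k}$ lies in $[\alpha_{i,k},\alpha_{i,k}+1]$, for $x\in[3,4]$ the relation $x+\lambda\in S_{i,k}$ forces $\lambda\in[\alpha_{i,k}-4,\alpha_{i,k}-2]$, so I would organize the count block by block and, for each block $(i,k)$, determine exactly which $\lambda$ can contribute. The crucial observation is that every such $\lambda$ — whether coming from $\Lambda_1$ or from $\Lambda_2$ — is aligned with the grid $\tfrac1{t_k}\mathbb{Z}$ on which the teeth of $S_{i,k}$ are centered, so that the whole contribution of generation $k$ is switched on only when $x$ itself lies close to that grid.

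For the $\Lambda_1$ part I would first check that every $\lambda\in\Lambda_1\cap[\alpha_{i,k}-4,\alpha_{i,k}-2]$ lies in $2^{-r_k}\mathbb{N}$: such a $\lambda$ belongs to some block $[n_l,n_{l+1})$ with $n_l\le\beta_{2^{k+1}}+1$, whence $m_l\le r_k$ by the definition of $r_k$, and therefore $\lambda\in2^{-m_l}\mathbb{N}\subset2^{-r_k}\mathbb{N}$. Since $t_k=p_k2^{r_k}$, each such $\lambda$ is an integer multiple of $\tfrac1{t_k}$, so $x+\lambda$ falls within $\tfrac1{\mathbf{n}t_k}$ of $\tfrac1{t_k}\mathbb{Z}$ exactly when $\|x t_k\|\le\tfrac1{\mathbf{n}}$, where $\mathbf{n}=2^k$. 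For the $\Lambda_2$ part I would invoke the spacing \eqref{spacing}: because $\beta_{k+1}-\beta_k>5$, the length-$2$ window $[\alpha_{i,k}-4,\alpha_{i,k}-2]$ contains no $\beta_m$ at all, while every $\alpha_j$ in it satisfies $\alpha_j<\beta_{2^{k+1}}=\alpha_{\nu_{2^{k+1}}}$ and hence lies in $A_k\setminus B_k$. Then \eqref{alg 02} gives $\|t_k\alpha_j\|\le\tfrac1{10\mathbf{n}}$, so $x+\alpha_j\in S_{i,k}$ can occur only when $\|x t_k\|\le\tfrac{11}{10\mathbf{n}}$. The same spacing argument shows that no $\beta_m$ is ever a contributing $\lambda$ for $x\in[3,4]$ (its partner block would require another $\beta$ in an interval of length $2$), which is precisely the asymmetry with Claim \ref{1434}.

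Putting these facts together, I would set $E_k=\{x\in[3,4]:\|x t_k\|\le\tfrac{11}{10}\,2^{-k}\}$. The two alignment statements show that if $x\notin E_k$ then no $\lambda\in\Lambda_*$ contributes to any block of generation $k$. Counting the intervals about the points of $\tfrac1{t_k}\mathbb{Z}\cap[3,4]$ gives $\mu(E_k)\le C\,2^{-k}$ for an absolute constant $C$, so $\sum_k\mu(E_k)<\infty$. By the Borel--Cantelli lemma almost every $x\in[3,4]$ lies in only finitely many $E_k$; for such an $x$ all generations $k$ beyond some $K(x)$ contribute nothing, and the finitely many remaining generations contribute only finitely many terms since $\Lambda_*$ is locally finite. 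Hence $s(x)<\infty$ for a.e.\ $x\in[3,4]$, which yields $\mu(C(f,\Lambda_*)\cap[3,4])=1$.

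The step I expect to be the main obstacle is the second paragraph: the bookkeeping that shows, for a fixed generation $k$, that every potentially contributing $\lambda$ from both families is grid-aligned, thereby collapsing a block's entire contribution to the single event $\{\|x t_k\|\lesssim 2^{-k}\}$. This collapse is what makes the measures summable, and it rests on three interlocking facts: the exact alignment of $\Lambda_1$ through the choice of $r_k$, the approximate alignment of $\Lambda_2$ through \eqref{alg 02}, and the use of \eqref{spacing} to ensure the relevant $\alpha_j$ actually land in $A_k\setminus B_k$ and that the $\beta_m$ themselves never contribute on $[3,4]$. Once this is established the Borel--Cantelli conclusion is routine.
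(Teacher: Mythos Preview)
Your argument is correct and follows essentially the same route as the paper: for each generation $k$ you show that any contributing $\lambda$---whether from $\Lambda_1$ (exactly on the $\tfrac{1}{t_k}$-grid via the definition of $r_k$) or from $\Lambda_2$ (approximately on the grid via \eqref{alg 02} after using the spacing \eqref{spacing} to land in $A_k\setminus B_k$)---forces $\|xt_k\|\lesssim 2^{-k}$, and then Borel--Cantelli finishes. The only difference from the paper is organizational: the paper runs two separate Borel--Cantelli arguments with exceptional sets $F_k$ (for $\Lambda_2$) and $G_k$ (for $\Lambda_1$), whereas you merge them into a single $E_k$; the underlying ideas and the three key ingredients you identify are exactly those the paper uses.
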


\begin{proof} [Proof of Claim \ref{34}] For each $k \in  {\mathbb {N}}$ let
$$F_k=[3,4] \cap \Big (\bigcup_{j\in  {\mathbb {N}}} \Big [\frac{j}{t_k}-\frac{3}{2\mathbf{n}t_k},\frac{j}{t_k}+\frac{3}{2\mathbf{n}t_k}\Big ]\Big ),$$
$$\Lambda_{i,k}=\{\alpha \in \Lambda_2: \ ([3,4]+\alpha) \cap S_{i,k}\neq \emptyset\},$$
$$\mathbf{\Lambda}_k=\cup_{i=1}^{2^k} \Lambda_{i,k},$$
and define
$$ \pmb{s}_k(x) = \sum_{\alpha \in \mathbf{\Lambda}_k} f(x+\alpha).$$
Note that
 $$\sum_{\alpha \in \Lambda_2} f(x+\alpha)=\sum_{k=1}^\infty \pmb{s}_k(x).$$
 Let $D_k=\{x \in [3,4] :\ \pmb{s}_k(x)>0 \}$.
 We claim that $D_k \subset F_k$.
 For each $i=1,2,\dots,2^k$ let $$D_{i,k}=\{x \in [3,4] : \ x + \alpha \in S_{i,k} \text{     for some    }\alpha \in \Lambda_{i,k}\}.$$
Note that $D_k=\cup_{i=1}^{2^k} D_{i,k}$.     Let $x \in D_k$.  Then choose $i$ such that  $x \in D_{i,k}$ and choose $\alpha \in \Lambda_{i,k}$ such that $x+\alpha \in S_{i,k}$.
It follows
that $\alpha_{i,k}-4\leq  {\alpha}\leq  {\alpha}_{i,k}-2$
and thus by
 \eqref{sk spacing} we have that $\alpha \in A_k \backslash B_k$.
Therefore, by \eqref{alg 02} we have
 \begin{equation}\label{alg 1}
||t_k\alpha||\le \frac1{10\mathbf{n}},
\end{equation}
where $\mathbf{n} =2^k$. Thus we can choose $j \in  {\mathbb {N}}$ such that $|t_k \alpha-j|\le \frac1{10\mathbf{n}}$, and hence
 \begin{equation}\label{alpha frac}
\Big |\alpha-\frac{j}{t_k}\Big |\le \frac1{10\mathbf{n}t_k}.
\end{equation}
Moreover, since $x+\alpha \in S_{i,k}$ we can choose $j'\in  {\mathbb {N}}$ such that
 \begin{equation}\label{x frac}
\Big |x+\alpha-\frac{j'}{t_k}\Big | \le \frac1{\mathbf{n} t_k}.
\end{equation}
From \eqref{alpha frac} and \eqref{x frac} we
deduce that $|x-\frac{j'-j}{t_k}| \le \frac1{\mathbf{n}t_k}+\frac1{10\mathbf{n}t_k}<\frac3{2\mathbf{n}t_k}$
 and hence we can conclude that $x \in F_k$. It follows that
$\mu(D_k) \le \mu(F_k) \le \frac1{2^{k-2}}$ and thus $\sum_{k=1}^{{\infty}} \mu(D_k) < \infty$. Thus by the Borel--Cantelli Lemma we obtain
$$\mu\Big (\Big \{x \in [3,4] : \ \sum_{k=1}^\infty \pmb{s}_k(x)=\infty \Big \}\Big )=0$$ and therefore
$\mu(C(f,\Lambda_2)\cap [3,4])=1$.

To complete the proof of Claim \ref{34} we need to show that
$\mu(C(f,\Lambda_1)\cap [3,4])=1$.

Define
$$\Lambda^1_{k}=\{\lambda \in \Lambda_1: \ ([3,4]+\lambda) \cap S_{k}\neq 0\}$$
and
$$ u_k(x) = \sum_{\lambda \in \Lambda^1_k} f(x+\lambda),$$
so $\sum_{\lambda \in \Lambda_1} f(x+\lambda)=\sum_{k=1}^\infty u_k(x).$  Also, we define
$$E_k=\{x \in [3,4] : \ x+\lambda \in S_k \text{     for some    }\lambda \in \Lambda_1\}.$$ Note that
$E_k=\{x\in [3,4] : \ u_k(x)>0\}$.  By the Borel--Cantelli Lemma, it remains to show that
$\sum_{k=1}^\infty \mu(E_k) < \infty$.

For every $\lambda \in \Lambda^1_k$ we have $\lambda \le \beta_{2^{k+1}}-2$ and hence
$\lambda \in 2^{-r_k} {\mathbb {N}}$.  Since $2^{r_k}$ divides $t_k$, it follows that
$$E_k {\subset}  \Big (\bigcup_{j=1}^\infty \Big [\frac{j}{t_k}-\frac1{\mathbf{n}t_k},\frac{j}{t_k}+\frac1{\mathbf{n}t_k}\Big ]\Big )\cap [3,4]:=G_k.$$
Since $\mu(G_k)=\frac2{\mathbf{n}}=\frac1{2^{k-1}}$, it follows that $\sum_{k=1}^\infty \mu(E_k)<\infty$.
\end{proof}
Hence the proofs of Claim \ref{34} and  of Theorem \ref{type1plusalg}
are complete.
\end{proof}

Looking at Theorems \ref{alg indep} and \ref{type1plusalg}, one might guess that
any discrete set $\Lambda$ containing infinitely many algebraically independent numbers is type 2.  As our next result (Theorem \ref{alg indep type 1})
 shows, this is not the case:

\begin{theorem}\label{alg indep type 1}
There exists a discrete set $\Lambda$ which is type 1 and which includes infinitely many algebraically independent numbers.
\end{theorem}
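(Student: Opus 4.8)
The plan is to realize $\Lambda$ as a union $\Lambda=\widetilde{\Lambda}\cup\Gamma$, where $\widetilde{\Lambda}$ is a type $1$ ``dyadic'' set of the kind in Example \ref{*exdyada}, say $\widetilde{\Lambda}=\cup_k\bigl(2^{-m_k}\N\cap[n_k,n_{k+1})\bigr)$ with $m_k\to\infty$ but $\sup_k(m_{k+1}-m_k)<\infty$, so that $\widetilde{\Lambda}$ is type $1$ by Theorem \ref{mkthm} while its local density $2^{m_k}$ tends to infinity. The set $\Gamma=\{\gamma_j\}$ will be an extremely sparse family of irrationals, with $\gamma_j$ placed inside a block $K_j$ where $K_j\uparrow\infty$ very fast. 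I would pick the $\gamma_j$ one at a time, each avoiding the countably many rational linear combinations of the previously chosen $\gamma_i$ and of the dyadic rationals of $\widetilde{\Lambda}$; since each block offers an interval's worth of choices and only countably many values are forbidden, this is possible. Then $\Gamma\subset\Lambda$ is an infinite algebraically (i.e.\ rationally) independent family and $\Lambda$ is discrete, so everything reduces to proving that $\Lambda$ is type $1$.

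To prove type $1$ I would invoke Theorem \ref{char func}: $\Lambda$ is type $2$ if and only if it admits a witness of the form $\chi_U$ with $U$ open. Hence it suffices to show that for every open $U$ the set $C(\chi_U,\Lambda)$ is null or co-null. Writing $s_\Lambda=s_{\widetilde{\Lambda}}+s_\Gamma$ and using that $\widetilde{\Lambda}$ is type $1$, for a fixed $U$ there are two cases. If $s_{\widetilde{\Lambda}}=\infty$ a.e., then $s_\Lambda=\infty$ a.e.\ and $C(\chi_U,\Lambda)=\emptyset$ a.e., which is consistent with type $1$. The real difficulty is the convergence case: assuming $s_{\widetilde{\Lambda}}<\infty$ a.e., I must show $s_\Gamma<\infty$ a.e., for then $s_\Lambda<\infty$ a.e.\ and $C(\chi_U,\Lambda)=\R$ a.e.

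The mechanism for this convergence transfer is that each $\gamma_j$ sits in a block whose grid spacing $2^{-m_{K_j}}$ is astronomically small. Thus, whenever $x+\gamma_j\in U$ with $x+\gamma_j$ at interior distance at least $2^{-m_{K_j}}$ from $U^c$, there is automatically a grid point $x+\lambda$ with $\lambda\in\Lambda_{K_j}$ lying in $U$; consequently $\chi_U(x+\gamma_j)\le \#(\Lambda_{K_j}\cap(U-x))+\mathbf 1_{\tilde B_j}(x)$, where $\tilde B_j$ is the set of $x$ for which $x+\gamma_j$ lies within $2^{-m_{K_j}}$ of $\partial U$. Summing in $j$ and using $\sum_j\#(\Lambda_{K_j}\cap(U-x))\le s_{\widetilde{\Lambda}}(x)<\infty$ a.e.\ (the blocks $K_j$ being distinct), the a.e.\ convergence of $s_\Gamma$ reduces to $\sum_j\mathbf 1_{\tilde B_j}<\infty$ a.e., which I would obtain from the (first) Borel--Cantelli lemma once $\sum_j|\tilde B_j|<\infty$, noting that $|\tilde B_j|$ is at most the measure of the $2^{-m_{K_j}}$-neighborhood of $\partial U$ in the window $[\gamma_j,\gamma_j+1]$.

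The main obstacle is exactly this boundary term. Since the $\gamma_j$, and hence the spacings $2^{-m_{K_j}}$, are fixed before the adversarial $U$, an open set with a ``thick'' null boundary could a priori make the strips non-summable. Overcoming this is where the rapid growth of the densities and the sparsity of $\Gamma$ must be used quantitatively: I would exploit that $s_{\widetilde{\Lambda}}<\infty$ a.e.\ already forces $U$ to carry very little mass near each $\gamma_j$ (because the enormous local density $2^{m_{K_j}}$ multiplies that mass in the dyadic count), and feed this restriction back to control the near-boundary contributions, choosing the $K_j$ so sparse that $\sum_j|\tilde B_j|<\infty$ for every $U$ surviving the convergence case. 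Establishing this estimate, namely that the dyadic convergence of $s_{\widetilde{\Lambda}}$ is strong enough to dominate the off-grid near-boundary behaviour of the independent points, is the crux of the argument; once it is in place the dichotomy for every $\chi_U$ follows, no open set witnesses type $2$, and $\Lambda$ is the desired type $1$ set containing the infinite independent family $\Gamma$.
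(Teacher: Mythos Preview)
Your construction is precisely the setup of Theorem \ref{type1plusalg}: a dyadic type $1$ set $\widetilde{\Lambda}=\Lambda_1$ together with an infinite set $\Gamma=\Lambda_2$ of algebraically independent irrationals. That theorem asserts, with no hypothesis whatsoever on the spacing of the $\alpha_k$, that $\Lambda_1\cup\Lambda_2$ is \emph{always} type $2$. So the set you build is type $2$, regardless of how sparsely you place the $\gamma_j$ or how fast the block densities $2^{m_{K_j}}$ grow. The ``crux'' you isolate is therefore not a technicality to be overcome but an actual obstruction: the proof of Theorem \ref{type1plusalg} uses Kronecker's theorem to manufacture an open $U$ for which $s_{\widetilde{\Lambda}}<\infty$ a.e.\ (indeed on all of $\R$, since $\widetilde{\Lambda}$ is type $1$) while $s_\Gamma=\infty$ on an interval. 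Concretely, the adversary chooses the components of $U$ near each $\gamma_j$ to be unions of intervals of width $\sim (2^k t_k)^{-1}$ centered at points of the grid $t_k^{-1}\Z$, where $t_k$ is a large multiple of the local dyadic denominator $2^{r_k}$; this keeps the $\widetilde{\Lambda}$-count finite, yet the integers $p_k$ hidden in $t_k$ are tuned via Kronecker so that the translates $x+\gamma_j$ hit $U$ for every $x$ in a fixed interval. Your heuristic that ``$s_{\widetilde{\Lambda}}<\infty$ forces $U$ to carry very little mass near $\gamma_j$'' is correct, but little mass spread on a fine grid of the adversary's choosing is exactly enough to trap the independent points.

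The paper's construction avoids this trap by \emph{not} adjoining isolated independent points to a rational grid. Instead, the entire set $\Lambda$ is built from dyadic translates of the $\alpha_n$ themselves: on $[n,n+1)$ one places $B_k+n$, where $B_k$ is a union of sets of the form $\{\alpha_i+j2^{-l}:j\in\Z\}\cap(0,1)$. Thus every $\alpha_n$ (up to an integer shift) eventually lies in $\Lambda$, but it is embedded in an increasingly fine cloud of its own dyadic translates, and the set satisfies condition $(*)$ so that $C$ and $D$ are half-lines. The type $1$ argument then exploits the resulting $2^{-p}$-periodicity of $\Lambda$ on each unit interval together with a counting bound on the ``new'' points appearing from block to block; the independent $\alpha_n$ never play the role of isolated outliers against a rational background.
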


\begin{proof}  Let $\{\alpha_1,\alpha_2,\dots\}$ be a sequence of algebraically independent irrational numbers. For each $k,n \in  {\mathbb {N}}$ we define
$$ A_{n,k}=\Big \{\alpha_n+\frac{j}{2^k}: \ j\in  {\mathbb {Z}} \text{     and     } 0 < \alpha_n+\frac{j}{2^k} < 1\Big \}.$$

Let
 $P = \{(i,j)\in  {\mathbb {N}} \times  {\mathbb {N}} : \ i \le j \}$.
and define an anti-lexicographical ordering on $P$ as follows:
$$ (i,j) < (i',j') \text{     if either     } j < j' \text{     or     } (j=j' \text{     and     } i < i').$$

Now define $\{A_k\}_{k\in  {\mathbb {N}}}$ so that

$$ \text{     for each     } (i,j)\in P \text{     there exists     } k \in  {\mathbb {N}} \text{     such that     } A_k=A_{i,j},$$
and
$$ \text{     if     } k<k' \text{     and     } A_k=A_{i,j} \text{     and     }A_{k'}=A_{i',j'}, \text{     then     } (i,j)<(i',j').$$
For each $k \in  {\mathbb {N}}$ we also define $B_k=\cup_{n=1}^k A_n$.

We are now ready to define $\Lambda$.  For each $k \in  {\mathbb {N}}$ we define
$$\Lambda_k=\cup_{i=0}^{2^k-1}(B_k+2^k+i),$$
and let $\Lambda=\cup_{k=1}^\infty \Lambda_k$.

Note that 
\begin{equation}\label{(*24)}
\text{$(\Lambda\cap [n,n+1))+1\subset \Lambda \cap [n+1,n+2)$ for all $n \in  {\mathbb {N}}$}
\end{equation}
 and
 $$\text{  $(\Lambda\cap [n,n+1))+1= \Lambda \cap [n+1,n+2)$ if $n\neq 2^k.$  }$$

We continue with a few more definitions:


\begin{definition}\label{*defper}
 Given $i\in  {\mathbb {N}}$,
 and $n\in  {\ensuremath {\mathbb Z}}$
 we say that $\Gamma$ is {\em$ \frac1{2^i}$ periodic on $[n,n+1]$} if
$$\Big (\Gamma\cap \Big [n+\frac{j-1}{2^i},n+\frac{j}{2^i}\Big ]\Big )+\frac{1}{2^i}=\Gamma\cap\Big [n+\frac{j}{2^i},n+\frac{j+1}{2^i}\Big ]
\text{     for     }j=1,2,\dots,{2^i}-1.$$
\end{definition}

\begin{lemma}\label{periodicity}
Suppose that  $i\in  {\mathbb {N}}$.  If $n\ge N(i):=2^{\frac{(i-1)\cdot i}2+1}$, then $\Lambda$ is $\frac1{2^i}$ periodic on $[{n},{n+1}]$.
\end{lemma}

\begin{proof} The proof is straightforward and left to the reader. \end{proof}
\bigskip

In order to get a contradiction we now suppose that $\Lambda$ is type 2.  Then by Theorem \ref{char func} we can find a measurable set $S$ and a characteristic function
$f=\mathbf{1}_S$ such that $\mu(C(f,\Lambda))>0$ and $\mu(D(f,\Lambda))>0$.

Observe that for any $k\in  {\ensuremath {\mathbb N}}$ the set $( {\Lambda}+1/2^{k}) {\setminus}  {\Lambda}$ is a finite set and hence
condition $(*)$ of Proposition \ref{*proptrans} is satisfied. Hence $C(f, {\Lambda})$
is a right half-line and $D(f, {\Lambda})$ is a left half-line modulo sets of measure zero.

 Then we can choose intervals $I_C$ and $I_D$ of unit length such that
 \begin{equation}\label{*Dfl}
\text{  $\mu(D(f,\Lambda)\cap I_D ) = 1 \text{   and     } \mu(C(f,\Lambda)\cap I_C) = 1.$}
 \end{equation}
We assume without loss of generality that $I_C=[0,1]$ and
$I_D=[-N,-(N-1)]$ for some $N \in  {\mathbb {N}}$, where
\begin{equation}\label{N K0}
N \ge 3.
\end{equation}
Since $f(x)>0$ implies that $f(x)=1$, we can choose $\mathbf{C}\subset C(f,\Lambda)\cap I_C$ and $M\in  {\mathbb {N}}$ such that
 \begin{equation}\label{C ineq}
\mu(\mathbf{C})> 0.8
 \end{equation}
and
 \begin{equation}\label{misses C}
\text{     for all     }\lambda \in \Lambda \cap [M,\infty) \text{     and for all     } x\in \mathbf{C} \text{     we have     } \ x+\lambda \notin S.
 \end{equation}

We also assume that $M$ is chosen so that
 \begin{equation}\label{M N ineq}
M > 2N.
 \end{equation}
We define $E=I_C \backslash \mathbf{C}$ and $\mathbf{D}=E- N$ and for each $n \in  {\mathbb {N}}$
let
$$S_n^\prime=S \cap I_n \text{  where  } I_n= [n,n+1),$$
 $$\text{  $S_n=\{y \in I_n: \ (y-\Lambda) \cap \mathbf{C  } =\emptyset\}$,\qquad $\widetilde{S}_n=S_n-n$,}$$
$$\text{  $D_n=(S_n-\Lambda) \cap I_D$  } .$$
 We also define $D_n^\prime=D_n \backslash \mathbf{D}=D_{n} {\setminus} (E- N)$ and we let
 $D_n^{\prime\prime}=D_n \cap \mathbf{D}$ so $D_n = D_n^\prime \cup D_n^{\prime\prime}$.  Note that by our choice of $M$
and \eqref{(*24)} we have $S_n^\prime \subset S_n$ for all $n > M$.  For each $n \in  {\mathbb {N}}$ set
$$\Gamma_n=\Lambda\cap [n-1,n+1).$$
Observe that
\begin{equation}\label{*SnG}
S_{n}=\{y\in I_n: (y- {\Gamma}_{n})\cap \mathbf{C}= {\emptyset} \}\text{  and  }D_{n}=(S_{n}-\Gamma_{n+N})\cap I_{D}.
\end{equation}

For the remainder of the proof we assume that $n > M$. Observe that by \eqref{M N ineq} we have $n < n+N < \frac32 n$. Choose $m\in  {\mathbb {N}}$ such that $2^m < n \le 2^{m+1}$.    It follows that
 \begin{equation}\label{gamma near}
\Gamma_n,\Gamma_{n+N} \subset \Lambda_m \cup \Lambda_{m+1}.
\end{equation}
Let $p$ be the largest integer such that
$$\text{  $\Lambda$ is $\frac1{2^p  }$ periodic on $I_{n-1}$.}$$
Let $V = \{2^k : \, k \in  {\mathbb {N}}\}$.  We make the following useful observations:
 \begin{equation}\label{contain gamma}
\Gamma_n+ 1\subset \Gamma_{n+1},
 \end{equation}
 \begin{equation}\label{Snsubset}
 S_{n}\subset S_{n-1}+1 \text{  and hence  }\widetilde{S}_{n} {\subset}
 \widetilde{S}_{n-1},
 \end{equation}
 \begin{equation}\label{gamma equal} \Gamma_n+1= \Gamma_{n+1} \text{  as long as  } \{n,n+1\}\cap V = \emptyset,
 \end{equation}
\begin{equation}\label{gamma periodic}
\Gamma_n \text{  is  } \frac1{2^p}\text{  periodic on  } [n-1,n+1] \text{  as long as  }n \notin V,
\end{equation}
\begin{equation}\label{Sn periodic}
S_n \text{  is  } \frac1{2^p} \text{  periodic on  } I_n \text{  as long as  } n \notin V,
\end{equation}
\begin{equation}\label{Sn equal}S_n+1 = S_{n+1} \text{  as long as  } \{n,n+1\} \cap V =\emptyset.
\end{equation}

Let $\widetilde{\Gamma}_n=\Gamma_{n+N}\backslash (\Gamma_n+N)$ and note that by the definition of $ {\Lambda}$ and $p$ and \eqref{gamma near} we have
\begin{equation}\label{less than 2}
\#\Big (\widetilde{\Gamma}_n \cap\Big [n+N-1+\frac{j-1}{2^p},n+N-1+\frac{j}{2^p}\Big ]\Big )\le 2
 \end{equation}
$\text{  for  }j=1,2,\dots,2^{p+1}.$
Define
$$T_n=(\Lambda+D_n^\prime)\cap S_n.$$

Next we prove that
$T_n= (\widetilde{\Gamma}_n+D_n^\prime)\cap S_n$.
From $x\in D_{n}'$ it follows that $x\not \in \mathbf{D}=(I_{C} {\setminus} \mathbf{C})-
 N$ and hence $x\in \mathbf{C}-N$.
 This implies that $x+N+\lambda\not \in S_{n}$ for $\lambda\in \Gamma_{n}$.
 On the other hand, obviously $(\Lambda+D_n^\prime)\cap S_n = (\Gamma_{n+N}+D_n^\prime)\cap S_n \supset (\widetilde{\Gamma}_n+D_n^\prime)\cap S_n.$

Now we show that
\begin{equation}\label{*tnwtgn}
T_{n}-\widetilde{{\Gamma}}_{n}\supset D_{n}'.
\end{equation}
Indeed, if $x\in D_{n}'$ then there exists
$\lambda\in \widetilde{{\Gamma}}_{n}$ such that $x+\lambda=y\in S_{n}.$
Then $y\in (\widetilde{\Gamma}_n+D_n^{\prime})\cap S_n=T_{n}$
and $x=y-\lambda\in T_{n}-\widetilde{{\Gamma}}_{n}.$

We claim that
\begin{equation}\label{TnSn}
(T_n+N) \cap S_{n+N}=\emptyset.
\end{equation}

To prove this claim let $y'\in T_n + N$.  Then $y' = y+N$, where $y \in T_n$.  Thus, we can choose
$x \in D_n^\prime$ and $\lambda \in \widetilde{\Gamma}_n$ such that $x+\lambda=y$.  Since $x+ N \in \mathbf{C}$ and $y'=x+N+\lambda$, we see that $y' \notin S_{n+N}$, as desired. 

Let $\widetilde{T}_n=T_n-n \subset \widetilde{S}_n.$
Observe that since 
by \eqref{Snsubset},
$\widetilde{S}_{n+1}\subset \widetilde{S}_n$, from \eqref{TnSn} and the fact that $\widetilde{T}_n \subset \widetilde{S}_n$, we conclude that
\begin{equation}\label{no intersect}
\widetilde{T}_{n+kN} \cap \widetilde{T}_n 
 {\subset}
\widetilde{S}_{n+kN} \cap \widetilde{T}_n
 {\subset}
\widetilde{S}_{n+N} \cap \widetilde{T}_n  
= \emptyset \text{  for all  }k \in  {\mathbb {N}}.
\end{equation}

We next examine several cases depending on the membership of $n$ and $n+N$ in $V$ as equations (\ref{gamma equal}-\ref{Sn equal}) show
that these are the exceptional cases.

First suppose that $n \in V $.  In this case, from \eqref{N K0}, \eqref{M N ineq} and $n>M$ we conclude that
$\{n+N-1,n+N\} \cap V = \emptyset $ and therefore
by \eqref{gamma equal},
 $\Gamma_{n+N}=\Gamma_{n+N-1}+1$.  Since we also have $S_{n}\subset S_{n-1}+1$, it follows that
\begin{equation}\label{*dncc1}
 D_n =(S_n-\Gamma_{n+N})\cap I_D \subset (S_{n-1}-\Gamma_{n+N-1})\cap I_D =D_{n-1}
 \end{equation}
 and hence
 \begin{equation}\label{*dncc2}
D_n^\prime=D_{n} {\setminus} \mathbf{D} \subset D_{n-1} {\setminus} \mathbf{D}=D_{n-1}^\prime.
 \end{equation}

Now suppose that $n+N \in V$.  In this case, using
 \eqref{N K0}, \eqref{M N ineq} and $n>M$ 
 again, we see that $\{n,n+1\} \cap V =\emptyset $ and therefore  \eqref{Sn equal}  holds
and hence $S_{n}+1=S_{n+1}$.
By \eqref{contain gamma} we also have
  $\Gamma_{n+N}+1\subset \Gamma_{n+N+1}$.
  Therefore, we obtain
\begin{equation}\label{*dncd1}
 D_n =(S_n-\Gamma_{n+N})\cap I_D \subset (S_{n+1}-\Gamma_{n+N+1})\cap I_D =D_{n+1}
 \end{equation}
 and hence
 \begin{equation}\label{*dncd2}
D_n^\prime=D_{n} {\setminus} \mathbf{D} \subset D_{n+1} {\setminus} \mathbf{D}=D_{n+1}^\prime.
 \end{equation}

Finally, suppose that $\{n,n+N\} \cap V  = \emptyset.$  In this case we have the following:
$$ S_n \text{  is  } \frac1{2^p} \text{  periodic on  }I_n,$$
$$\Gamma_{n+N} \text{  is  } \frac1{2^p} \text{  periodic on  }  [n+N-1,n+N+1 ] $$ and
$$\Gamma_{n} \text{  is  } \frac1{2^p} \text{  periodic on  }  [n-1,n+1 ].$$
  It follows that $\widetilde{\Gamma}_n$ is $\frac1{2^p}$ periodic on $[n+N-1,n+N+1]$ and thus $T_n$ is $\frac1{2^p}$ periodic on $I_n$.  
  Therefore $T_n-\widetilde{\Gamma}_n$ is $\frac1{2^p}$ periodic on $I_D$.  Using this fact, along with \eqref{less than 2} and \eqref{*tnwtgn} we conclude that
\begin{equation}\label{mu of D}
\mu(D_n^\prime)\le 2 \mu(T_n)=2 \mu(\widetilde{T}_n).
\end{equation}

Now let  $R=\{j > M: \, \{j,j+N\} \cap V =\emptyset \}$ and for each $k=1,2,\dots,N$ define
$N_k=\{M+k,M+k+N,M+k+2N,\dots\}$.  Then using  \eqref{no intersect} 
and \eqref{mu of D} 
we deduce that for
$k=1,2,\dots ,N$ we have
\begin{align}\label{list eqn}\sum_{j \in N_k \cap R} \mu(D_j^\prime)&\le 2 \sum_{j=0}^\infty \mu(\widetilde{T}_{M+k+jN})\\\ \nonumber
       &=2\mu(\cup_{j=0}^\infty \widetilde{T}_{M+k+jN})\\ 
       &\le 2. \label{list eqn end}
\end{align}
Now let  $R_1=\{j >M : \, j \in V\}$ and
 $R_2=\{j >M : \, j+N \in V\}$.  Then $R_1-1 \subset R$ and $R_2+1 \subset R$.
Using this fact along with \eqref{*dncc2}, \eqref{*dncd2} and (\ref{list eqn}-\ref{list eqn end}) we see that for $k=1,2,\dots N$ we have
$\sum_{j\in N_k \cap R_i} \mu(D_j^\prime) \le 2 $ for $i=1,2$.  Putting this together with
(\ref{list eqn}-\ref{list eqn end}) we deduce that
$$\sum_{j=M+1}^\infty \mu(D_j^\prime) < 6N < \infty .$$

Let $G=\{x \in I_D \backslash \mathbf{D} : \ \sum_{\lambda \in \Lambda} f(x+\lambda)=\infty \}$.  Then the above inequality and the Borel--Cantelli Lemma tell us that $\mu(G)=0$.  Therefore, we have shown that $\mu(D(f,\Lambda)\backslash \mathbf{D})=0$ and hence it follows that
$\mu(D(f,\Lambda )\cap I_D ) \le \mu(\mathbf{D}) < 0.2 $ which  contradicts
\eqref{*Dfl}, as desired.  \end{proof}

\section{Unions and Minkowski sums}\label{secminko}

\begin{proposition}\label{*proptos}
If $\Lambda_1,\Lambda_2\subset {\ensuremath {\mathbb R}}$ are type 1 sets then $\Lambda_1\cup\Lambda_2$ is also type 1.
\end{proposition}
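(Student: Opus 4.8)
The plan is to exploit the fact that a sum of non-negative terms over a union can be sandwiched between the individual sums and their total, which reduces the zero-one law for $\Lambda_1\cup\Lambda_2$ to the zero-one laws already available for $\Lambda_1$ and $\Lambda_2$. Writing $\Lambda=\Lambda_1\cup\Lambda_2$, I would first check that $\Lambda$ is again an admissible set in the sense used throughout the paper: it is countable, unbounded, bounded from below, and has no finite accumulation points, each of these properties being inherited from $\Lambda_1$ and $\Lambda_2$ (in particular, an accumulation point of the union would be an accumulation point of one of the two pieces).

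Next, for an arbitrary non-negative measurable $f$, set $s_i(x)=\sum_{\lambda\in\Lambda_i}f(x+\lambda)$ for $i=1,2$ and $s(x)=\sum_{\lambda\in\Lambda}f(x+\lambda)$. Since every term is non-negative and $\Lambda_1,\Lambda_2\subset\Lambda$, one has the pointwise sandwich
$$\max(s_1(x),s_2(x))\le s(x)\le s_1(x)+s_2(x)\quad\text{for every }x\in\R,$$
where the upper bound may double-count the overlap $\Lambda_1\cap\Lambda_2$ but remains valid precisely because $f\ge 0$. From this I read off the exact set identity $C(f,\Lambda)=C(f,\Lambda_1)\cap C(f,\Lambda_2)$, because $s(x)<\infty$ holds if and only if both $s_1(x)<\infty$ and $s_2(x)<\infty$.

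Finally I would invoke the zero-one law for each $\Lambda_i$. Since $\Lambda_1$ is type $1$, either $C(f,\Lambda_1)=\emptyset$ a.e.\ or $C(f,\Lambda_1)=\R$ a.e., and likewise for $\Lambda_2$. If at least one of the convergence sets is empty modulo a null set, then the intersection $C(f,\Lambda)$ is empty a.e.; if both are full a.e., then $C(f,\Lambda)=\R$ a.e. In every case $C(f,\Lambda)$ equals $\emptyset$ or $\R$ modulo sets of measure zero, which is exactly the statement that $\Lambda$ is type $1$.

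I do not expect a serious obstacle here. The only places needing mild care are the verification that the union is admissible and the measure-theoretic bookkeeping in the last step, where one uses that the intersection of two full-measure sets is again of full measure. The genuine content is the set identity $C(f,\Lambda)=C(f,\Lambda_1)\cap C(f,\Lambda_2)$, and this is immediate once the non-negativity of $f$ is used to establish the sandwich inequality above.
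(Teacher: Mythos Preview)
Your proposal is correct and follows essentially the same approach as the paper: both arguments use the pointwise sandwich $\max(s_1,s_2)\le s\le s_1+s_2$ and then apply the zero-one laws for $\Lambda_1$ and $\Lambda_2$. The only cosmetic difference is that you phrase things via the convergence sets and the identity $C(f,\Lambda)=C(f,\Lambda_1)\cap C(f,\Lambda_2)$, while the paper works with the complementary divergence sets; your added remark about the union being admissible is a nice touch the paper omits.
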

\begin{proof}
Let $f\colon {\ensuremath {\mathbb R}}\rightarrow {\ensuremath {\mathbb R}}$ be a non-negative measurable function. For every $x\in {\ensuremath {\mathbb R}}$ we have
\[
\max\left(\sum_{\lambda\in\Lambda_1} f(x+\lambda), \sum_{\lambda\in\Lambda_2} f(x+\lambda)\right) \le \sum_{\lambda\in\Lambda_1\cup\Lambda_2} f(x+\lambda) \le \sum_{\lambda\in\Lambda_1} f(x+\lambda) + \sum_{\lambda\in\Lambda_2} f(x+\lambda),
\]
hence
\[
D(f,\Lambda_1),D(f,\Lambda_2)\subset D(f,\Lambda_1\cup\Lambda_2) \subset D(f,\Lambda_1)\cup D(f,\Lambda_2),
\]
i.e. if $\mu( {\ensuremath {\mathbb R}}\setminus D(f,\Lambda_1))=0$ or $\mu( {\ensuremath {\mathbb R}}\setminus D(f,\Lambda_2))=0$ then $\mu( {\ensuremath {\mathbb R}}\setminus D(f,\Lambda_1\cup\Lambda_2))=0$, and $\mu(D(f,\Lambda_1))=\mu(D(f,\Lambda_2))=0$ implies $\mu(D(f,\Lambda_1\cup\Lambda_1))=0$, thus $\Lambda_1\cup\Lambda_2$ is type 1.
\end{proof}

\begin{proposition}\label{2-esek unioja}
There exist two decreasing gap asymptotically dense type 2 sets $\Lambda_1$ and $\Lambda_2$ such that there union is type 1.
\end{proposition}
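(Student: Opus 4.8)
The plan is to realize a type $1$ ``dyadic'' set as the union of two type $2$ dyadic sets by letting the two pieces \emph{take turns} being the finer grid. Recall from Example \ref{*exdyada} (equivalently Theorem \ref{mkthm} with $m_k=k$) that $\widetilde\Lambda=\bigcup_{k=1}^\infty\big(2^{-k}\mathbb{N}\cap[k,k+1)\big)$ is type $1$. I would split $\widetilde\Lambda$ into $\Lambda_1$ and $\Lambda_2$ so that on each interval $[k,k+1)$ the set $\Lambda_i$ is a dyadic grid $2^{-p^{(i)}_k}\mathbb{N}\cap[k,k+1)$, where the two resolution exponents always satisfy $\max(p^{(1)}_k,p^{(2)}_k)=k$. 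Since a coarser dyadic grid is contained in a finer one, on each $[k,k+1)$ we get $2^{-p^{(1)}_k}\mathbb{N}\cup 2^{-p^{(2)}_k}\mathbb{N}=2^{-k}\mathbb{N}$, so $\Lambda_1\cup\Lambda_2=\widetilde\Lambda$ is type $1$ no matter how the resolutions are distributed between the two sets.

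To build the resolutions I would fix integers $1=s_0<s_1<s_2<\dots$ with $s_{i+1}-s_i\to\infty$, and for $k\in[s_i,s_{i+1})$ set $p^{(1)}_k=k,\ p^{(2)}_k=s_i$ when $i$ is even, and $p^{(1)}_k=s_i,\ p^{(2)}_k=k$ when $i$ is odd. Then $\max(p^{(1)}_k,p^{(2)}_k)=k$ on every block, as required, and $\Lambda_i\subset\widetilde\Lambda$ so each $\Lambda_i$ is a legitimate discrete, unbounded set. Each sequence $\big(p^{(i)}_k\big)_k$ is non-decreasing: inside a block it is either the identity $k$ or a constant, at the transition into a constant block it increases by exactly $1$, and at the transition back to the identity it jumps from the previous constant value $s_{i-1}$ up to $s_i$. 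Both block transitions are increases, which gives monotonicity, and the second kind of transition produces the large jumps I need.

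The two required properties then follow. First, $\Lambda_i$ is decreasing gap asymptotically dense in the sense of Definition \ref{asydens}: on $[k,k+1)$ all its gaps equal $2^{-p^{(i)}_k}$ and the boundary gap to the next interval is also $2^{-p^{(i)}_k}$, so since $p^{(i)}_k$ is non-decreasing and tends to $\infty$ the entire gap sequence is monotone non-increasing and tends to $0$. Second, $\Lambda_i$ is type $2$: applying Theorem \ref{ez is 2-es} with $\varepsilon=1$, the relevant counting function is $a_n=\#(\Lambda_i\cap[n,n+1))=2^{p^{(i)}_n}$, and at the indices where the resolution resumes the identity (for $\Lambda_1$ these are $k=s_i$ with $i$ even) one has $p^{(1)}_{s_i}-p^{(1)}_{s_i-1}=s_i-s_{i-1}\to\infty$; hence $\limsup_n a_n/a_{n-1}=\limsup_n 2^{p^{(i)}_n-p^{(i)}_{n-1}}=\infty$, and Theorem \ref{ez is 2-es} yields that $\Lambda_i$ is type $2$. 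The computation for $\Lambda_2$ is symmetric, with the large jumps occurring at the starts of the odd blocks.

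The only genuinely delicate point is the tension that the interleaving resolves: each resolution sequence must contain arbitrarily large consecutive jumps (to force type $2$ via Theorem \ref{ez is 2-es}), yet their pointwise maximum must grow in bounded steps (to keep the union a bounded-gap dyadic set, hence type $1$ by Theorem \ref{mkthm}). Freezing one resolution at the level $s_i$ while the other climbs, and then swapping the roles at the next block, is precisely what lets a large jump in one coordinate be absorbed without moving the maximum. Everything else—checking monotonicity of each $\big(p^{(i)}_k\big)_k$ and the identity $\max(p^{(1)}_k,p^{(2)}_k)=k$ at the block boundaries—is a routine verification.
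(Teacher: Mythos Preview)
Your proof is correct and follows essentially the same approach as the paper. The paper's construction is precisely your scheme with the particular choice $s_i=2^i$: on the blocks $[2^{2i},2^{2i+1})$ one set carries the full resolution $2^{-n}$ while the other is frozen at $2^{-2^{2i}}$, and the roles swap on $[2^{2i+1},2^{2i+2})$; both arguments then invoke Theorem~\ref{ez is 2-es} for type~$2$ and Example~\ref{*exdyada}/Theorem~\ref{mkthm} for the type~$1$ union.
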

\begin{proof}
Set
$$
\Lambda_1 = \bigcup_{i=0}^\infty \left(\left(\bigcup_{n=2^{2i}}^{2^{2i+1}-1} (2^{-n}\cdot {\ensuremath {\mathbb Z}}) \cap [n,n+1)\right) \cup \left([2^{2i+1},2^{2i+2})\cap (2^{-2^{(2i+1)}}\cdot {\ensuremath {\mathbb Z}})\right) \right),
$$
and
$$
\Lambda_2 = \bigcup_{i=0}^\infty \left( \left(\bigcup_{n=2^{2i+1}}^{2^{2i+2}-1} \left((2^{-n}\cdot {\ensuremath {\mathbb Z}}) \cap [n,n+1)\right)\right) \cup
\left([2^{2i},2^{2i+1})\cap (2^{2^{-2i}}\cdot {\ensuremath {\mathbb Z}})\right) 
\right).
$$
For every $i\in {\ensuremath {\mathbb N}}$ we have
$$
\frac{\#\left(\Lambda_1\cap[2^{2i},2^{2i}+1)\right)}{\#\left(\Lambda_1\cap[2^{2i}-1,2^{2i})\right)} = \frac{2^{2^{2i}}}{2^{2^{2i-1}}} = 2^{2^{2i-1}},
$$
and
$$
\frac{\#\left(\Lambda_2\cap[2^{2i+1},2^{2i+1}+1)\right)}{\#\left(\Lambda_2\cap[2^{2i+1}-1,2^{2i+1})\right)} = \frac{2^{2^{2i+1}}}{2^{2^{2i}}} = 2^{2^{2i}}
$$
hence $\Lambda_1$ and $\Lambda_2$ are type 2 by Theorem \ref{ez is 2-es}.

From the definition of these sets
$$
\Lambda_1\cup\Lambda_2 = \bigcup_{n=0}^\infty [n,n+1)\cap 2^{-n}\cdot {\ensuremath {\mathbb Z}},
$$
which is a type 1 set according to Theorem \ref{mkthm}.
\end{proof}

\begin{theorem}\label{*minksum}
If the sets $\Lambda=\{\lambda_0,\lambda_1,\ldots\}$ and $\Lambda'=\{\lambda'_0,\lambda'_1,\ldots\}$ are type 1 then the Minkowski sum $\Lambda+\Lambda'$ is
also type 1.
\end{theorem}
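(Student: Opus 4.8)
The plan is to reduce to characteristic-function witnesses and then route everything through an intermediate ``$\Lambda$-sum'' function. Write $\sigma(x)=\sum_{\mu\in\Lambda+\Lambda'}f(x+\mu)$ for the series attached to the set $\Lambda+\Lambda'$, and compare it with the double (multiset) sum $\tau(x)=\sum_{\lambda\in\Lambda}\sum_{\lambda'\in\Lambda'}f(x+\lambda+\lambda')$. Setting $g(y)=\sum_{\lambda\in\Lambda}f(y+\lambda)$ we get the clean identity $\tau(x)=\sum_{\lambda'\in\Lambda'}g(x+\lambda')$, so $\tau$ is exactly the $\Lambda'$-series of the $\Lambda$-series $g$. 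This is the object on which the two type $1$ hypotheses naturally act, and the heart of the proof is an iterated application of Definition \ref{def1}.

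The difficulty is that $\sigma$ and $\tau$ differ: each $\mu\in\Lambda+\Lambda'$ is counted in $\tau$ with its representation multiplicity $m(\mu)=\#\{(\lambda,\lambda'):\lambda+\lambda'=\mu\}$, so only $\sigma\le\tau$ holds in general, and this multiplicity can be unbounded. I expect this to be the main obstacle, and I would remove it by first invoking Theorem \ref{char func}: it suffices to verify the zero--one law for $f=\mathbf 1_S$, the characteristic function of an open set, since if $\Lambda+\Lambda'$ were type $2$ such a witness would have to exist. The payoff is that for $f=\mathbf 1_S$ the two sums have identical convergence and divergence sets. Indeed, because $\Lambda$ and $\Lambda'$ are bounded below and discrete, every $\mu$ has finite multiplicity $m(\mu)<\infty$; hence $\tau(x)=\sum_{\mu:\,x+\mu\in S}m(\mu)$ is a sum of positive integers, which is infinite exactly when its index set is infinite, i.e.\ exactly when $\sigma(x)=\infty$. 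Thus $C(\mathbf 1_S,\Lambda+\Lambda')=C(\tau)$ and $D(\mathbf 1_S,\Lambda+\Lambda')=D(\tau)$ as sets, and it is enough to establish the zero--one law for $\tau$.

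With this reduction the argument becomes a two-fold case split. First apply the type $1$ property of $\Lambda$ to $f=\mathbf 1_S$. If $D(f,\Lambda)=\R$ a.e., i.e.\ $g=\infty$ a.e., then already $\tau(x)\ge g(x+\lambda'_0)=\infty$ for a.e.\ $x$, so $D(\mathbf 1_S,\Lambda+\Lambda')=\R$ a.e. Otherwise $g<\infty$ a.e.; replacing $g$ by $g_0=g\cdot\mathbf 1_{\{g<\infty\}}$ gives a genuine non-negative measurable function with $g_0=g$ a.e., and since the exceptional null set $\{g=\infty\}$ contributes only countably many translates $\{g=\infty\}-\lambda'$, one has $\tau(x)=\sum_{\lambda'}g_0(x+\lambda')$ for a.e.\ $x$. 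Now apply the type $1$ property of $\Lambda'$ to $g_0$: either $\sum_{\lambda'}g_0(\cdot+\lambda')<\infty$ a.e., whence $C(\mathbf 1_S,\Lambda+\Lambda')=\R$ a.e., or it equals $\infty$ a.e., whence $D(\mathbf 1_S,\Lambda+\Lambda')=\R$ a.e. In every case the series for $\Lambda+\Lambda'$ obeys the zero--one law for the witness $\mathbf 1_S$, so no open-set characteristic function can serve as a type $2$ witness; by Theorem \ref{char func} this forces $\Lambda+\Lambda'$ to be type $1$.

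The only remaining points are routine: the measure-zero bookkeeping in passing from $g$ to $g_0$ (a countable union of null translates of $\{g=\infty\}$), the observation that $\sigma$ is well defined as a sum of non-negative terms over a countable index set even if $\Lambda+\Lambda'$ fails to be discrete, and the verification that Theorem \ref{char func} applies to $\Lambda+\Lambda'$. The essential and only genuinely non-formal ingredient is the multiplicity-collapsing step supplied by the characteristic-function reduction, which is exactly what turns the merely one-sided bound $\sigma\le\tau$ into an equivalence of convergence behaviour.
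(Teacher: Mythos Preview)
Your argument is correct and follows essentially the same route as the paper's proof: reduce to characteristic functions via Theorem~\ref{char func}, form the intermediate series $g(y)=\sum_{\lambda\in\Lambda}f(y+\lambda)$, observe that for $f=\mathbf 1_S$ the multiset sum $\tau(x)=\sum_{\lambda'}g(x+\lambda')$ and the set sum $\sigma(x)=\sum_{\mu\in\Lambda+\Lambda'}f(x+\mu)$ have identical convergence behaviour (your $m(\mu)<\infty$ observation is exactly the paper's equation~\eqref{fg1}--\eqref{fg1.2}), handle the case $g=\infty$ a.e.\ directly, and otherwise replace $g$ by a finite-valued modification (your $g_0$, the paper's $g^*$) before invoking the type~$1$ property of $\Lambda'$. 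The only cosmetic difference is that the paper normalises $\lambda_0=\lambda'_0=0$ so that $\Lambda\subset\Lambda+\Lambda'$ handles the divergent case, whereas you use $\tau(x)\ge g(x+\lambda'_0)$; and your remark that $\Lambda+\Lambda'$ might fail to be discrete is unnecessary caution, since boundedness below and discreteness of $\Lambda,\Lambda'$ force only finitely many sums $\lambda+\lambda'\le M$ for each $M$.
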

\begin{proof}
We can assume that $\lambda_0=\lambda'_0=0$ as a translation does not change the type of a set.

Take a measurable characteristic function $f\colon  {\ensuremath {\mathbb R}}\rightarrow {\ensuremath {\mathbb R}}$ (by Theorem \ref{char func} it is enough to study characteristic functions). If $\sum_{\lambda\in\Lambda} f(x+\lambda)$ diverges for almost every $x\in {\ensuremath {\mathbb R}}$ then $\sum_{\widetilde{\lambda}\in\Lambda+\Lambda'} f(x+\widetilde{\lambda})$ also diverges for almost every $x\in {\ensuremath {\mathbb R}}$, since $\Lambda+\Lambda'$ contains $\Lambda$.

If $\sum_{\lambda\in\Lambda} f(x+\lambda)$ converges almost everywhere, then the function $g$ 
 defined by $g(x):=\sum_{\lambda\in\Lambda} f(x+\lambda)$  is a non-negative extended real valued function (that is $g\colon  {\ensuremath {\mathbb R}}\rightarrow[0,\infty]$), and it has a finite value almost everywhere. For every $x\in {\ensuremath {\mathbb R}}$
\begin{equation}\label{fg1}
\sum_{\lambda'\in\Lambda'} g(x+\lambda') = \sum_{\widetilde{\lambda}\in\Lambda+\Lambda'} \#\{\lambda\in\Lambda : \widetilde{\lambda}-\lambda\in\Lambda'\} \cdot f(x+\widetilde{\lambda}),
\end{equation}
thus using that $f$ is a characteristic function we obtain
\begin{equation}\label{fg1.2}
\sum_{\lambda'\in\Lambda'} g(x+\lambda')<\infty \text{  if and only if  } \sum_{\widetilde{\lambda}\in\Lambda+\Lambda'} f(x+\widetilde{\lambda})<\infty.
\end{equation}

For every $x\in {\ensuremath {\mathbb R}}$ let
$$
g^*(x) :=
\begin{cases}
g(x) & g(x)<\infty \\
0 & g(x)=\infty.
\end{cases}
$$
As $g^*$ and $g$ agree almost everywhere, by \eqref{fg1.2} for almost every $x\in {\ensuremath {\mathbb R}}$ we have
\begin{equation}\label{fg2}
\sum_{\lambda'\in\Lambda'} g^*(x+\lambda')<\infty \text{  if and only if  } \sum_{\widetilde{\lambda}\in\Lambda+\Lambda'} f(x+\widetilde{\lambda})<\infty.
\end{equation}
Since $\Lambda'$ is type 1, $\sum_{\lambda'\in\Lambda'} g^*(x+\lambda')$ converges almost everywhere or diverges almost everywhere, hence $\sum_{\lambda\in\Lambda+\Lambda'} f(x+\lambda)$ also converges almost everywhere or diverges almost everywhere according to \eqref{fg2}.
\end{proof}

\begin{proposition}\label{*mink2}
There is a type 2 set $\Lambda$ such that for every $\Lambda'=\{\lambda'_0,\lambda'_1,\ldots\}$ the Minkowski sum $\Lambda+\Lambda'$ is type 2.
\end{proposition}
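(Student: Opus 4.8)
The plan is to exhibit a single type $2$ set $\Lambda$ whose unit-interval counting function grows faster than every exponential, and to show that this growth is inherited by the Minkowski sum with any $\Lambda'$. Concretely, I would take $\Lambda=\bigcup_{k=1}^\infty\Lambda_k$, where $\Lambda_k\subset[k,k+1)$ is a finite arithmetic progression consisting of $a_k:=2^{k^2}$ points, so that in the notation of Corollary \ref{suru 2-es} we have $\limsup_{n\to\infty}a_n/c^n=\infty$ for every $c>0$, i.e. \eqref{fast} holds. This $\Lambda$ is bounded below, discrete, and has no finite accumulation point, since each spike is finite and the spikes march off to $+\infty$; by Corollary \ref{suru 2-es} it is type $2$. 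The decisive reduction is that for every $\Lambda'=\{\lambda_0',\lambda_1',\dots\}$ we have $\Lambda+\Lambda'\supset\Lambda+\lambda_0'$, a translate of $\Lambda$. Hence by Corollary \ref{suru 2-es} it suffices to prove that the translate $\Lambda+\lambda_0'$ again satisfies \eqref{fast}, for then $\Lambda+\Lambda'$, being one of its supersets, is type $2$.

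The one genuine difficulty is that $\lambda_0'$ is an arbitrary real number, so after translation the $k$-th spike no longer lies inside a single cell $[n,n+1)$ of the integer grid used in \eqref{fast}. However, $\Lambda_k+\lambda_0'$ lies in the length-$1$ interval $[k+\lambda_0',k+1+\lambda_0')$, which meets at most two consecutive integer cells; by pigeonhole at least one of them, say $[N_k,N_k+1)$ with $N_k\in\{\lfloor k+\lambda_0'\rfloor,\lfloor k+\lambda_0'\rfloor+1\}$, contains at least $a_k/2$ of these points. Writing $b_n:=\#((\Lambda+\lambda_0')\cap[n,n+1))$ we therefore have $b_{N_k}\ge a_k/2$ with $N_k\le k+\lambda_0'+1$. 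Since $N_k-k$ is bounded by the fixed constant $\lambda_0'+1$, for any $c\ge 1$ one gets $b_{N_k}/c^{N_k}\ge (a_k/2)\,c^{-(\lambda_0'+1)}c^{-k}\to\infty$ because $a_k/c^k\to\infty$; for $0<c<1$ the factor $c^{-N_k}\to\infty$ already forces the ratio to infinity. Thus $\limsup_n b_n/c^n=\infty$ for every $c>0$, so the translate $\Lambda+\lambda_0'$ satisfies \eqref{fast}.

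With this in hand the proof closes at once: applying Corollary \ref{suru 2-es} to the set $\Lambda+\lambda_0'$ (which satisfies \eqref{fast}) together with its superset $\Lambda+\Lambda'$ shows that $\Lambda+\Lambda'$ is type $2$. It remains only to check that $\Lambda+\Lambda'$ is a legitimate set of the class under study: it is bounded below, and any bounded interval can contain a sum $\lambda+\lambda'$ only for $\lambda,\lambda'$ lying in bounded regions, of which $\Lambda$ and $\Lambda'$ each contain finitely many points, so $\Lambda+\Lambda'$ has no finite accumulation point. I expect the main obstacle to be exactly the non-integer alignment treated above; once one observes that super-exponential growth is robust under a bounded shift of the indexing (the harmless factor $c^{\lambda_0'+1}$) and under splitting a spike between two cells (the harmless factor $2$), everything collapses to a direct invocation of Corollary \ref{suru 2-es}.
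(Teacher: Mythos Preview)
Your proof is correct and follows the same strategy as the paper: exhibit a $\Lambda$ all of whose supersets are type $2$, then observe that $\Lambda+\Lambda'$ contains a translate of $\Lambda$. The paper invokes Theorem~\ref{always type 2} (algebraically independent $\Lambda$ satisfying \eqref{speed}) for this hereditary property, while you invoke Corollary~\ref{suru 2-es} (super-exponential growth \eqref{fast}); either works.

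Your treatment of the translation is correct but heavier than necessary. Rather than verifying that $\Lambda+\lambda_0'$ again satisfies \eqref{fast} via the pigeonhole argument, you can simply note that type is translation-invariant: $(\Lambda+\Lambda')-\lambda_0'\supset\Lambda$, so by Corollary~\ref{suru 2-es} this set is type $2$, and hence so is $\Lambda+\Lambda'$. This is exactly the paper's ``we may assume $\lambda_0'=0$'' shortcut, and it removes the entire discussion of non-integer alignment.
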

\begin{proof}
Let $\Lambda$ be a type 2 set which is not contained by a type 1 set as guaranteed by Theorem \ref{always type 2}. We can assume that $\lambda_0=\lambda'_0=0$ as a translation does not change the type of a set. Then $\Lambda\subset\Lambda'+\Lambda$, hence $\Lambda'+\Lambda$ is also type 2.
\end{proof}

It is useful to point out that it is easy to construct examples of type 2 sets with type 1 sum or a type 2 and a type 1 set with type 1 sum.  For instance, we take $\Lambda_1$ and $\Lambda_2$ from the proof of Proposition \ref{2-esek unioja} and let $\Lambda=\Lambda_1\cup\Lambda_2$. We know that $\Lambda_1$ and $\Lambda_2$ are type 2 and $\Lambda$ is type 1. All of them contain $0$ hence $\Lambda$ is a subset of $\Lambda_1+\Lambda_2$ and $\Lambda_1+\Lambda$.
 Since $\lambda+\Lambda\subset\Lambda$ for every $\lambda\in\Lambda$, we also have $\Lambda_1+\Lambda_2, \Lambda_1+\Lambda \subset \Lambda$,
 therefore $ {\Lambda}= {\Lambda}_{1}+ {\Lambda}_{2}= {\Lambda}_{1}+ {\Lambda}$ is a type $1$ set.

\section{Acknowledgements}

Z. Buczolich thanks the R\'enyi Institute where he was
a visiting researcher for the academic year 2017-18.

B. Hanson would like to thank the Fulbright Commission, the Budapest Semesters in Mathematics, and the R\'enyi Institute for their generous support during the Spring of 2018, while he was visiting Budapest as a Fulbright scholar.



\begin{thebibliography}{99}


\bibitem{[ABW]} C. Aistleitner,
I. Berkes,  K. Seip,  and M.
 Weber. Convergence of series of dilated functions and spectral norms of GCD matrices.
{\it Acta Arith.} {\bf 168}  no. 3, 221–246, 2015.

\bibitem{[Beck]} J. Beck, From Khinchin's conjecture on strong uniformity to superuniform motions. {\it Mathematika} {\bf 61}  591–707, 2015.

\bibitem{[Bee]} I. Berkes, On the asymptotic behaviour of $\sum f(n_kx)$. I. Main Theorems, II. Applications, {\it Z. Wahrsch. verw. Gebiete,} {\bf 34}  319–345, 347–365,
1976.

\bibitem{[BWa]}
I. Berkes,   and M.
 Weber.
On the convergence of $\sum c_k f(n_kx)$.
{\it Mem. Amer. Math. Soc.} {\bf 201}  no. 943, viii+72 pp, 2009.

\bibitem{[BWb]}
I. Berkes,   and M.
 Weber.
On series $\sum c_k f(kx)$ and Khinchin's conjecture.
{\it Israel J. Math.} {\bf 201}  no. 2, 593–609, 2014.

\bibitem{[Bou]}  J. Bourgain. Almost sure convergence and bounded entropy. {\it Israel J. Math.} {\bf 63}  79–97, 1988.



\bibitem{[BMV]} Z. Buczolich,  B. Maga, and
G. V\'ertesy, On series of translates of positive functions III., (preprint)
{\href{http://arxiv.org/abs/1801.09935}{\tt http://arxiv.org/abs/1801.09935}}, to appear in  Anal. Math..


\bibitem{[BHMV]} Z. Buczolich, B. Hanson, B. Maga, and G. V\'ertesy, Random constructions for translates of non-negative functions,
(preprint) {\href{https://arxiv.org/abs/1804.10408}{\tt https://arxiv.org/abs/1804.10408}}, submitted.


\bibitem{[BKM1]} Z. Buczolich, J-P. Kahane, and R. D. Mauldin,
On series of translates of positive functions, {\it Acta Math. Hungar.},
{\bf 93}(3) (2001), 171-188.


\bibitem{[BKM2]} Z. Buczolich, J-P. Kahane, and R.~D. Mauldin,
 Sur les s\'eries de translat\'ees de fonctions positives.
 {\em C. R. Acad. Sci. Paris S\'er. I Math.}, {\bf 329}(4):261--264, 1999.

\bibitem{[BM1]} Z. Buczolich, and R.~D. Mauldin,
\newblock On the convergence of {$\sum^\infty_{n=1}f(nx)$} for measurable
  functions.
\newblock {\em Mathematika}, {\bf 46}(2):337--341, 1999.


\bibitem{[BM2]} Z. Buczolich, and R. D. Mauldin,
On series of translates of positive functions II.,
{\it Indag. Mathem.}, N. S., {\bf 12} (3), (2001), 317-327.\medskip

\bibitem{[C]} J.W.S. Cassels, {\it An Introduction to Diophantine Approximation},
Cambridge University Press, 1957.

\bibitem{[H1]} J.A. Haight, A linear set of infinite measure with no two points
having integral ratio, {\it Mathematika} {\bf 17}(1970), 133-138.

\bibitem{[H2]} J.A. Haight, A set of infinite measure whose ratio set does not
contain a given sequence, {\it Mathematika} {\bf 22}(1975), 195-201.

\bibitem{[Kh]} A. Khinchin, Ein Satz \"uber
Kettenbruche mit arithmetischen Anwendungen, {\it Math. Zeit.,}
{\bf 18}
 289-306, 1923.

%


\bibitem{[M]} J. M. Marstrand, On Khinchin's conjecture about
strong uniform distribution, {\it Proc. London Math. Soc.} {\bf
3} 21, 540-556, 1970.

\bibitem{[QW]} A. Quas, and
M. Wierdl, Rates of divergence of non-conventional ergodic averages.
{\it Ergodic Theory Dynam. Systems} {\bf 30}  no. 1, 233–262, 2010.


\bibitem{[HW]} H. \! v. \!\! Weizs\"acker, {\it Zum Konvergenzverhalten der Reihe
$\Sigma_{n=1}^{\infty}f(nt)$ f\"ur $\lambda$-messbare Funktionen
$f: {\ensuremath {\mathbb R}}^+\to  {\ensuremath {\mathbb R}}^+$,} Diplomarbeit, Universit\"at M\"unchen, 1970.




\end{thebibliography}
\end{document}